\theoremstyle{plain}
\newtheorem{theorem}{Theorem}[section]
\newtheorem{proposition}[theorem]{Proposition}
\newtheorem{lemma}[theorem]{Lemma}
\newtheorem{corollary}[theorem]{Corollary}
\theoremstyle{definition}
\newtheorem{definition}[theorem]{Definition}
\newcommand{\C}{\mathbb{C}}
\newcommand{\R}{\mathbb{R}}
\newcommand{\Q}{\mathbb{Q}}
\newcommand{\Z}{\mathbb{Z}}
\renewcommand{\O}{\mathcal{O}}
\DeclareMathOperator{\Tr}{Tr}
\DeclareMathOperator{\Nm}{N}
\DeclareMathOperator{\SL}{SL}
\DeclareMathOperator{\rank}{rank}
\DeclareMathOperator{\sgn}{sgn}
\DeclareMathOperator{\NF}{NF}
\title{Representing rational integers by generalized quadratic forms over quadratic fields}
\author[]{Ond\v{r}ej Chwiedziuk}
\author[]{Mat\v{e}j Dole\v{z}\'{a}lek}
\author[]{Emma P\v{e}chou\v{c}kov\'{a}}
\author[]{Zden\v{e}k Pezlar}
\author[]{Om Prakash}
\author[]{Giuliano Romeo}
\author[]{Anna R\r{u}\v{z}i\v{c}kov\'{a}}
\author[]{Mikul\'{a}\v{s} Zindulka}
\address{Charles University, Faculty of Mathematics and Physics, Department of Algebra,
Sokolovsk\'{a} 83, 186 75 Praha 8, Czech Republic}
\email{ondrachwiedziuk@gmail.com}
\email{matej@gimli.ms.mff.cuni.cz}
\email{emma.pechouckova@gmail.com}
\email{zdendapezlar@seznam.cz}
\email{omprakash@ksom.res.in}
\email{anna.ruzickova.13@gmail.com}
\email{mikulas.zindulka@matfyz.cuni.cz}
\address{Department of Mathematical Sciences ``Giuseppe Luigi Lagrange", Politecnico di Torino, Turin, Italy}
\email{giuliano.romeo@polito.it}
\subjclass[2020]{11E12, 11E16, 11E20, 11E25, 11E39, 11R11, 11R80}
\keywords{Generalized quadratic form, universal quadratic form, Hermitian form, real quadratic field}
\thanks{We acknowledge support by Czech Science Foundation (GA\v{C}R) grant 26-20514S, and Charles University programmes PRIMUS/24/SCI/010 and UNCE/24/SCI/022, and GAUK projects 134824 and 236225.}
\begin{document}

\begin{abstract}
We investigate generalized quadratic forms with values in the set of rational integers over quadratic fields. We characterize the real quadratic fields which admit a positive definite binary generalized form of this type representing every positive integer. We also show that there are only finitely many such fields where a ternary generalized form with these properties exists.
\end{abstract}

\maketitle

\section{Introduction}
\label{secIntro}

A positive definite quadratic form over $ \Z $ is called \emph{universal} if it represents every positive integer $ n $. The history of universal quadratic forms begins with Lagrange's proof of the Four Square Theorem in 1770: \textit{Every positive integer $ n $ is of the form $ x^2+y^2+z^2+w^2 $}. Diagonal quaternary universal forms were classified by Ramanujan \cite{Ra}, who stated that there exist $ 55 $ such forms. Dickson \cite{Di} corrected Ramanujan's statement by pointing out that the form $ x^2+2y^2+5z^2+5u^2 $ does not represent $ 15 $. Thus there exist exactly $ 54 $ diagonal universal forms. Dickson \cite{Di2} also proved various results on representation by non-diagonal quaternary forms.

In 1993, John H. Conway and W. A. Schneeberger proved the celebrated 15-Theorem. A quadratic form having an integer matrix is called \emph{classical}.

\begin{theorem}[15-Theorem, Conway-Schneeberger]
\label{thmFifteen}
A classical quadratic form $ Q $ over $ \Z $ is universal if and only if it represents the integers
\[
	1, 2, 3, 5, 6, 7, 10, 14, \text{and } 15.
\]
\end{theorem}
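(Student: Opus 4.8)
The forward implication is immediate, so suppose $Q$ is a positive definite classical form (a positive definite lattice with integral Gram matrix) that represents each of $1,2,3,5,6,7,10,14,15$; the task is to show $Q$ is universal. The tool is Bhargava's \emph{escalation} argument. For a positive definite classical lattice $L$ that is not universal, let its \emph{truant} $t(L)$ be the least positive integer it fails to represent, and call $L' = L + \Z v$ an \emph{escalation} of $L$ if $\dim L' = \dim L + 1$ and $Q(v) = t(L)$. The \emph{escalator lattices} are those obtained from the zero lattice by a finite chain of escalations; thus the unique rank-one escalator is $\langle 1 \rangle = x^2$.

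The first point is that the escalation tree is finite at every node: if $L'$ is an escalation of $L$ with Gram matrix written in block form $\left(\begin{smallmatrix} G & b \\ b^{T} & t(L) \end{smallmatrix}\right)$, then $b$ is an integer column (this is exactly where ``classical'' is used) and positive-definiteness forces $b^{T} G^{-1} b < t(L)$, an ellipsoid condition with only finitely many integral solutions; quotienting by $\mathrm{Aut}(L)$ leaves $L$ with finitely many escalations up to isomorphism. Running this from the zero lattice (truant $1$) yields $\langle 1\rangle$ (truant $2$), then the binary escalators $\langle 1,1\rangle$ (truant $3$) and $\langle 1,2\rangle$ (truant $5$), then the ternary escalators, and so on.

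The computational core is to carry the tree to its termination, proving: (i) every branch stops, i.e.\ after finitely many escalations one reaches an escalator that is already universal; and (ii) the truants of the non-universal escalators that occur are exactly $1,2,3,5,6,7,10,14,15$. Granting these, the theorem follows quickly. Since $Q$ represents $1$ it contains $\langle 1\rangle$. Whenever $Q$ contains a non-universal escalator $L$, its truant lies in the nine-element set by (ii), hence is represented by $Q$, and choosing $v\in Q$ with $Q(v)=t(L)$ we get $L+\Z v\subseteq Q$; one checks along the way that the rational span of each escalator does not represent its truant, so this is genuinely an escalation inside $Q$ of strictly larger rank. By (i) this ascending chain of escalators in $Q$ must halt, and since every non-universal escalator admits a further escalation (e.g.\ $L\perp\langle t(L)\rangle$), it can only halt at a universal escalator $L^{\ast}\subseteq Q$; hence $Q$ is universal.

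The main obstacle is (i): bounding the tree means proving that a long but finite list of explicit positive definite classical forms in four variables --- plus a few in five --- is universal. For the quaternary escalators I would proceed through the arithmetic of ternary forms: each quaternary escalator contains a ternary sublattice, and one shows this ternary form represents every sufficiently large integer in the square classes allowed by its local behaviour, using the local--global principle for the genus of a ternary form together with the finiteness and effective control of spinor exceptions (Tartakowski's theorem, and the work of Duke and Schulze-Pillot), after which universality reduces to a finite numerical check; several of the quaternary escalators are moreover already on Ramanujan's and Dickson's classical lists. The remaining work --- the finiteness above and the enumeration of which escalators appear at each stage --- is routine but laborious reduction theory of positive definite integral forms.
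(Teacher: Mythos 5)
The paper does not prove the 15-Theorem; it states it as a known result and defers to Bhargava's published proof \cite{Bh}, and your outline is precisely that escalation argument (truants, the ellipsoid bound $b^{\top}G^{-1}b < t(L)$ made integral by classicality, the finite escalator tree whose truants are exactly $1,2,3,5,6,7,10,14,15$, and the chain of escalators inside $Q$ terminating in a universal sublattice). The plan is correct, with the understood caveat that the substance of the theorem lives in the deferred finite computation --- enumerating the escalators and verifying universality of the quaternary ones via their ternary sublattices --- exactly as in the cited source.
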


The proof was simplified by Bhargava \cite{Bh}. As a corollary, one obtains all universal classical quaternary forms -- there are exactly $207$ of them. Bhargava and Hanke \cite{BH} extended the result to non-classical forms by proving the 290-Theorem: \textit{If a positive definite quadratic form $ Q $ over $ \Z $ represents every positive integer $ n \leq 290 $, then $ Q $ is universal.} In fact, for $ Q $ to be universal, it is enough that it represents every integer from a list of $ 29 $ numbers, the largest number on the list being $ 290 $. A corollary is that there are exactly $ 6436 $ universal quaternary forms \cite[Theorem~4]{BH}.

It is natural to extend the notion of universality from $ \Z $ to the ring of integers $ \O_K $ in a number field. Assume that the number field $ K $ is \emph{totally real}. A positive integral quadratic form over $ K $ is called \emph{universal} if it represents all totally positive integers in $ K $. A version of the local-global principal over number fields proved by Hsia, Kitaoka, and Kneser \cite{HKK} implies that a universal form over $ K $ always exists. Maa{\ss} \cite{Ma} showed that the sum of three squares $ x^2+y^2+z^2 $ is universal over $ \Q(\sqrt{5}) $. By a result of Siegel \cite{Si}, if a sum of squares is universal over $ K $, then $ K = \Q $ or $ \Q(\sqrt{5}) $.  In~1993, Chan, Kim, and Raghavan \cite{CKR} proved that a ternary universal quadratic form exists over a real quadratic field $ K = \Q(\sqrt{D}) $ if and only if $ D $ equals $ 2 $, $ 3 $, or $ 5 $. Moreover, they identified all ternary universal forms over these fields up to equivalence. For forms of higher ranks, B. M. Kim \cite{Ki} proved that there exist only finitely many $ D $'s such that $ \Q(\sqrt{D}) $ admits a universal form in $7$ variables (see also \cite{KKP2} for an explicit estimate on the size of $ D $). On the other hand, a $ 8 $-ary universal form exists over each $ \Q(\sqrt{n^2-1}) $ whenever $ n^2-1 $ is squarefree \cite{Ki2}. Blomer and Kala \cite{BK} showed: \textit{For any positive integer $ r $, there are infinitely many quadratic fields $ \Q(\sqrt{D}) $ that do \emph{not} have a universal lattice of rank $ \leq r $.} Lower bounds for the rank of a universal form depending on $ D $ were obtained by Kala and Tinková \cite{KT}.

One of the principal open problems in this area is Kitaoka's conjecture: \textit{There are only finitely many totally real number fields $ K $ having a ternary universal form.} The conjecture was proved by Kala and Yatsyna \cite{KY} for fields of an arbitrary but fixed degree $ d $.

For a more comprehensive account of the theory of universal quadratic forms, especially over number fields, we refer the reader to the survey article \cite{Ka2}.

In the same vein, one can study Hermitian forms over imaginary quadratic fields. We call a positive definite Hermitian form over an imaginary quadratic field \emph{$\Z$-universal} if it represents every $ n \in \Z_{\geq 1} $. Earnest and Khosravani \cite{EK} proved that universal binary Hermitian forms exist over $ K =\Q(\sqrt{m}) $ for only finitely many $ m $, and identified all such forms in the case when $ K $ has class number $1$. B. M. Kim, J. Y. Kim, and P.-S. Park \cite{KKP} proved an analogy of the 15-Theorem for Hermitian quadratic forms: \textit{If a positive definite Hermitian form $ H $ represents $1$, $2$, $3$, $5$, $6$, $7$, $10$, $13$, $14$, and $15$, then $ H $ is universal.} They also completely determined the minimal rank of a positive Hermitian form over all imaginary quadratic fields.

Our paper concerns generalized quadratic forms introduced recently by Browning, Pierce, and Schindler \cite[Definition~1.1]{BPS}. They can be defined for an arbitrary Galois extension $ K $ of $ \Q $ but we will consider them only over quadratic fields. A quadratic field $ K = \Q(\sqrt{D}) $ has two $ \Q $-automorphisms: the identity and the conjugation map
\begin{align*}
	\tau: K &\hookrightarrow K\\
    a+b\sqrt{D} &\mapsto a-b\sqrt{D}.
\end{align*}
Informally speaking, a generalized quadratic form may contain also the conjugates of the variables. For example,
\begin{equation}
\label{eqGex}
	G(z, w) = z^2-z\tau(z)+\tau(z)^2+w^2+w\tau(w)+\tau(w)^2
\end{equation}
is a generalized quadratic form (and we will show later that it represents every $ a \in \Z_{\geq 1} $ over $ \Q(\sqrt{2}) $). Thus, the concept generalizes both quadratic and Hermitian forms.

A generalized quadratic form is a special case of a generalized polynomial (see our \Cref{defGenPol}). The values of a generalized polynomial $ g $ in $ n $ variables with coefficients in $ K $ also lie in $ K $. However, we restrict our attention to generalized polynomials with values in $ \Q $ and find a minimal set of generators for the ring of such polynomials in~\Cref{thmBasis}. 

If $ G(z_1, z_2, \dots, z_n) $ is a generalized form in $ n $ variables and
\[
	G(\alpha_1, \alpha_2, \dots, \alpha_n) \in \Z,\quad \forall \alpha_1, \alpha_2, \dots, \alpha_n \in \O_K,
\]
then we say that $ G $ is \emph{$ \Z $-valued}. The properties of being positive definite, integral, and classical also readily extend to this setting. A positive definite integral $ \Z $-valued generalized form which represents every $ a \in \Z_{\geq 1} $ will be called \emph{$ \Z $-universal}. Our main theorem is the following.

\begin{theorem}
\label{thmBinary}
Let $ K = \Q(\sqrt{D}) $ where $ D \in \Z_{\geq 2} $ is squarefree.
\begin{enumerate}[i)]
\item Assume that $ D \equiv 2, 3 \pmod{4} $. A binary $ \Z $-universal generalized quadratic form exists over $ K $ if and only if $ D \in \{2,3,6,7,10\} $.
\item Assume that $ D \equiv 1 \pmod{4} $. A \emph{classical} binary $ \Z $-universal generalized quadratic form exists over $ K $ if and only if $ D = 5 $.
\end{enumerate}
\end{theorem}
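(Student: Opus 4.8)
The plan is to pass, via restriction of scalars, to ordinary quaternary quadratic forms over $\Z$, and then to exploit the rigidity forced by the base field. Fix the $\Z$-basis $\{1,\omega\}$ of $\O_K$ with $\omega=\sqrt D$ if $D\equiv 2,3\pmod 4$ and $\omega=\tfrac{1+\sqrt D}{2}$ if $D\equiv 1\pmod 4$, and write $z=x_1+y_1\omega$, $w=x_2+y_2\omega$. A binary generalized quadratic form $G(z,w)$ then becomes a quaternary quadratic form $F_G(x_1,y_1,x_2,y_2):=G(z,w)$ over $\Q$, and $G$ is positive definite (respectively $\Z$-valued, classical, represents a fixed $a\in\Z$) precisely when $F_G$ is. By \Cref{thmBasis}, every integral $\Z$-valued binary generalized quadratic form can be written as
\[
	G(z,w)=\Tr(\alpha z^2)+\Tr(\epsilon w^2)+\Tr(\gamma zw)+\Tr(\delta z\tau(w))+\beta_1\Nm(z)+\beta_2\Nm(w),\qquad \alpha,\gamma,\delta,\epsilon\in\O_K,\ \beta_1,\beta_2\in\Z,
\]
and expanding in the coordinates $(x_1,y_1,x_2,y_2)$ gives the Gram matrix $A$ of $F_G$ explicitly. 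Positive definiteness of $G$ makes $F_G$ a rank-$4$ positive definite form.

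For the ``only if'' direction assume $G$ is $\Z$-universal, i.e.\ $F_G$ represents every positive integer. Reading off $A$, one checks that $F_G$ is \emph{classical} whenever $D\equiv 2,3\pmod 4$ (for $D\equiv 1\pmod 4$ this is exactly the hypothesis of part (ii)); hence $F_G$ is a classical universal quaternary form, so by the classification underlying \Cref{thmFifteen} (see \cite{Bh}) it lies, up to equivalence, in a finite list, and in particular $\det A$ is at most an explicit constant. On the other hand the $K$-structure of $A$ forces $\det A$ to grow with $D$: when $\omega=\sqrt D$ the two rows of $A$ indexed by the ``$\sqrt D$-coordinates'' $y_1,y_2$ are entrywise divisible by $D$, so $D^2\mid\det A$; combined with the bound on $\det A$ this makes $D$ bounded, and a direct inspection of the remaining fields (keeping track of the congruence conditions the $K$-structure puts on $A$) leaves exactly $D\in\{2,3,6,7,10\}$. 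When $\omega=\tfrac{1+\sqrt D}{2}$ the half-integers weaken this to $D\mid 4\det A$, which still bounds $D$; to cut the remaining cases down to $D=5$ one uses, in Chan--Kim--Raghavan style \cite{CKR}, that $F_G$ must represent $1$ and hence split off a unary form, after which the residual ternary form -- still carrying the $\Q(\sqrt D)$-structure -- is too constrained to stay universal unless $D=5$.

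For the ``if'' direction one exhibits explicit forms and certifies universality by \Cref{thmFifteen}. For $D=2$ the form \eqref{eqGex} equals $\Tr(z^2)-\Nm(z)+\Tr(w^2)+\Nm(w)$ and unfolds to $x^2+2y^2+3z^2+6w^2$, which is universal (and has $\det A=36=2^2\cdot 9$, illustrating the divisibility above). For $D=3,6,7,10$ one picks $G(z,w)=\Tr(\alpha z^2)+\beta_1\Nm(z)+\Tr(\epsilon w^2)+\beta_2\Nm(w)$ with small totally positive $\alpha,\epsilon$ so that $F_G$ is one of the universal quaternary forms of \Cref{thmFifteen}, and for $D=5$ one gives a classical $G$ over $\Q(\sqrt5)$ whose unfolding is a classical universal quaternary form. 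Verifying each of these is a finite check.

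The main obstacle is making the ``only if'' argument sharp: one must pin down the exact power of $D$ dividing $\det A$, together with the congruence constraints the $K$-structure imposes on $A$, and combine this with the precise maximum of $\det A$ over classical universal quaternary forms, so that the resulting inequality collapses to exactly the claimed lists rather than to a wider range; in the $D\equiv 1\pmod 4$ case, where the divisibility alone is too weak, the extra work is the representation-theoretic rigidity argument isolating $D=5$. The ``if'' direction, once the right forms have been located, is routine.
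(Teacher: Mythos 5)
Your overall strategy --- unfold $G$ into a quaternary form $F_G$ over $\Z$, show $F_G$ is classical, invoke Bhargava's finite list to bound $\det F_G$ by $112$, and extract divisibility by $D$ from the $K$-structure --- is exactly the paper's, and your case (i) ``only if'' argument is essentially complete: the two rows of the Gram matrix indexed by $y_1,y_2$ are divisible by $D$ when $\omega=\sqrt D$, so $D^2\mid\det F_G\leq 112$ gives $D\leq 10$, and no further ``inspection'' is needed since every squarefree $2\leq D\leq 10$ with $D\equiv 2,3\pmod 4$ is already on your list. The gap is in case (ii): you only extract $D\mid 4\det F_G$, which bounds $D$ by $448$, and you then defer to an unspecified ``Chan--Kim--Raghavan style'' rigidity argument to isolate $D=5$; that argument is never carried out and is where all the remaining difficulty would sit. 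It is also unnecessary: when $G$ is classical its Gram matrix $M_G$ has entries in $\O_K$, so $\det(M_G)\in\O_K\cap\Q=\Z$, and the change-of-basis identity $\det F_G=D^2\det(M_G)$ (\Cref{lemmaDetMQ}) again yields $D^2\mid\det F_G\leq 112$, hence $D\leq 10$ and $D=5$ is the only squarefree option $\equiv 1\pmod 4$. As written, your part (ii) is incomplete.

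The ``if'' direction has a more concrete error. For $D\in\{3,6,7,10\}$ you propose forms $\Tr(\alpha z^2)+\beta_1\Nm(z)+\Tr(\epsilon w^2)+\beta_2\Nm(w)$ with no $zw$ or $z\tau(w)$ terms; such a $G$ unfolds to an orthogonal sum $B_1\perp B_2$ of binary forms, where with $\alpha=a_1+a_2\sqrt D$ one gets $B_1=(2a_1+\beta_1)x_1^2+4Da_2x_1y_1+D(2a_1-\beta_1)y_1^2$ of determinant $D\bigl(4a_1^2-\beta_1^2-4Da_2^2\bigr)$. Writing $\det B_i=Dk_i$, we have $k_i\equiv-\beta_i^2\equiv 0,3\pmod 4$, while $D^2k_1k_2=\det F_G\leq 112$ forces $k_1k_2\leq 3$ for $D\in\{6,7,10\}$; so some $k_i\in\{1,2\}$, a contradiction. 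For $D=3$ the same congruence kills every admissible factorization except $k_1k_2=12$ with $\{k_1,k_2\}=\{3,4\}$, and a short enumeration of the resulting binary forms (the only determinant-$9$ one representing $1$ is $x^2+9y^2$, and the determinant-$12$ ones of this shape are $2x^2+6y^2$ and $4(x^2+xy+y^2)$) shows the resulting quaternaries miss $5$, respectively $2$. So no form of your proposed shape is $\Z$-universal over $\Q(\sqrt D)$ for $D\in\{3,6,7,10\}$: the cross terms are essential, as in the paper's witnesses (e.g.\ for $D=3$ the coefficients of $zw$ and $z\tau(w)$ are $-2\sqrt 3$ and $-\sqrt 3$; see \Cref{lemmaExD3,lemmaExD6,lemmaExD7,lemmaExD10}). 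Your $D=2$ example is correct, and for $D=5$ a split classical example does exist, but the existence claims for $D=3,6,7,10$ need genuinely different forms, each then certified by \Cref{thmFifteen}.
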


We further extend this result to ternary generalized forms by proving the following.

\begin{theorem}
\label{thmTernary}
Let $ K = \Q(\sqrt{D}) $ where $ D \in \Z_{\geq 2} $ is squarefree. Assume that one of the following two conditions is satisfied:
\begin{enumerate}[i)]
\item $ D \equiv 2, 3 \pmod{4} $ and there exists a ternary $ \Z $-universal generalized quadratic form over $ K $,
\item $ D \equiv 1 \pmod{4} $ and there exists a \emph{classical} ternary $ \Z $-universal generalized quadratic form over $ K $.
\end{enumerate}
Then $ D \leq 110 $.
\end{theorem}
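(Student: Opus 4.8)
The plan is to descend from the generalized form to an ordinary positive definite quadratic form over $\Z$, to observe that coming from a generalized form over $\Q(\sqrt D)$ forces $D$ into the shape of that form in a very rigid way, and then to show that for large $D$ the resulting rank-$6$ form is too spread out to be universal. Concretely: fix an integral basis $\{1,\omega\}$ of $\O_K$, and given a $\Z$-universal ternary generalized form $G$ substitute $z_i=x_i+y_i\omega$ to obtain a positive definite $\Z$-valued quadratic form $\tilde G$ in the six variables $x_1,y_1,x_2,y_2,x_3,y_3$. The values of $G$ on $\O_K^3$ are exactly the values of $\tilde G$ on $\Z^6$, so $\tilde G$ represents every positive rational integer, i.e.\ $\tilde G$ is a universal $\Z$-lattice of rank~$6$. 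When $D\equiv 1\pmod 4$ the hypothesis that $G$ be \emph{classical} is precisely what makes $\tilde G$ a classical $\Z$-form, so that the $15$-Theorem (\Cref{thmFifteen}) applies; when $D\equiv 2,3\pmod 4$ one uses the $290$-Theorem \cite{BH} instead. Either way $\tilde G$ represents every integer in an explicit finite set $S$ (with $\max S=15$, respectively $\max S=290$).

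The next step extracts $D$ from $\tilde G$. By the classification of $\Z$-valued generalized polynomials in \Cref{thmBasis}, a $\Z$-valued generalized quadratic form in a single variable always has Gram determinant divisible by $D$. Hence, for any $v\in\O_K^3$, the rank-two $\O_K$-sublattice $\O_K v=\Z v\oplus\Z\omega v$ carries a positive definite binary $\Z$-form whose discriminant is a nonzero multiple of $D$, and so has absolute value at least $D$; in particular $\O_K v$ represents, among integers smaller than $D$, only perfect-square multiples of $G(v)$, and the vector $\omega v$ is long, of norm $\gtrsim D/G(v)$. Applying this to the basis vectors of $\O_K^3$ shows both that $D^{3}$ divides the discriminant of $\tilde G$ and, more to the point, that $\tilde G$ has three long directions $\omega e_1,\omega e_2,\omega e_3$ whose norms grow with $D$.

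Now let $N=\max S$ and take $D$ large. The claim to establish is that every vector of $\tilde G$-norm at most $N$ is then confined to a fixed sublattice $M$ of rank at most three: intuitively, a vector whose coordinates have nonzero $\omega$-parts picks up a contribution of size about $D$ from some diagonal block, which cannot be cancelled by the off-diagonal terms. Granting this, $\tilde G|_M$ is a positive definite $\Z$-form of rank $\leq 3$ which, by universality of $\tilde G$, represents every integer in $[1,N]$; but a positive definite $\Z$-form of rank at most three is never universal — if it represented all of $S$ it would be universal by the $15$- or $290$-Theorem — so it omits some integer $\leq N$, a contradiction. Hence $D$ is bounded. Tracking the precise thresholds in the previous two steps, together with a finite case analysis over the possible reduction data, pins the bound down to $D\leq 110$.

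The heart of the argument, and its main difficulty, is the confinement claim in the previous paragraph: one must pass to a genuinely reduced $\O_K$-basis of $\tilde G$ and control the mixed Gram entries $B(\omega e_i,\omega e_j)$ — which a priori are of size $\sim D$ — well enough to guarantee that no small-norm vector escapes a rank-$\leq 3$ sublattice, despite the quadratic form $\tilde G$ being far from orthogonally split in its natural $\O_K$-block structure. After that the remaining work is a finite, though intricate and in practice computer-assisted, verification; the same descent and bookkeeping, applied to rank-$4$ forms instead, are what make the sharp characterization in \Cref{thmBinary} possible rather than merely a finiteness statement.
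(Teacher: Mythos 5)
Your first step (substituting $z_i = x_i + y_i\omega$ to get a positive definite senary form representing every positive integer) matches the paper, but your case analysis of when that form is classical is backwards. By \Cref{lemmaGQ}, for $D \equiv 2,3 \pmod 4$ mere integrality of $G$ already makes the associated form $Q$ classical (traces of elements of $\O_K$ are even there), while it is the case $D \equiv 1 \pmod 4$ that needs the hypothesis that $G$ itself be classical. So under either hypothesis of the theorem $Q$ is classical and the $15$-Theorem machinery applies; the $290$-Theorem is not needed here (it is only invoked in \Cref{thmBinTer2} to drop the classicality assumption), and using it where you propose to would in any case not yield the bound $110$.

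The genuine gap is the confinement claim, which you assert but do not prove and which you yourself identify as the heart of the matter. Nothing in your proposal controls the mixed Gram entries $B(\omega e_i,\omega e_j)$, and without that the statement that every vector of norm $\le N$ lies in a fixed rank-$\le 3$ sublattice is not established; the divisibility facts you do prove (that $D$ divides the discriminant of each rank-two sublattice $\O_K v$ and that $D^3$ divides $\det(M_Q)$) are consistent with $\lambda_4 \le N$ for arbitrarily large $D$, since they only force $\lambda_5\lambda_6$ to be large. Even if one patches this by taking four independent vectors of norm $\le 15$ and bounding their Gram determinant by Hadamard, one gets only $D \le 15^4$, nowhere near $110$; your closing sentence that ``tracking the precise thresholds \dots pins the bound down to $D\le 110$'' has no content behind it. The paper's proof replaces all of this with one structural input you are missing: a classical universal form contains (up to equivalence) one of Bhargava's finitely many quaternary escalator sublattices $R$, each with $\det(M_R) \le 112$ \cite[Table 3]{Bh}. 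Combined with \Cref{propRank} — the Gram matrix $M_Q$ has rank $\le 3$ modulo every prime $p \mid D$, hence so does the $4\times 4$ Gram matrix $M_R$ of the sublattice, forcing $p \mid \det(M_R)$ and then $D \mid \det(M_R)$ by squarefreeness — this gives $D \le 112$ immediately, and inspecting which squarefree $D$ divide an escalator determinant yields $D \le 110$. You should either import that escalation input or supply a complete reduction-theoretic proof of your confinement claim; as written the argument is a plausible strategy, not a proof.
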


The proofs heavily rely on the 15-Theorem. A natural question is what happens when we remove the assumption that the form is classical in the case $ D \equiv 1 \pmod{4} $. By an application of the 290-Theorem, we can find the answer.

\begin{theorem}
\label{thmBinTer}
There are only finitely many real quadratic fields which admit a binary or ternary $ \Z $-universal generalized quadratic form.
\end{theorem}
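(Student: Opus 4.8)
The plan is to reduce to Theorems~\ref{thmBinary} and~\ref{thmTernary}. Those already settle every $\Q(\sqrt D)$ with $D\equiv 2,3\pmod 4$ (parts (i)) and every $\Q(\sqrt D)$ with $D\equiv 1\pmod 4$ as far as \emph{classical} forms go (parts (ii)), so all that remains is to bound $D$ when $D\equiv 1\pmod 4$ and $\Q(\sqrt D)$ carries a non-classical binary or ternary $\Z$-universal generalized form. Furthermore, if $G(z_1,z_2)$ is such a binary form then $G(z_1,z_2)+\Tr(z_3^2)$ is again $\Z$-valued, positive definite, integral, non-classical, and $\Z$-universal; so it is enough to treat ternary forms.

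Let then $G=G(z_1,z_2,z_3)$ be a non-classical ternary $\Z$-universal generalized form over $K=\Q(\sqrt D)$ with $D\equiv 1\pmod 4$, and set $\omega=\tfrac{1+\sqrt D}{2}$. Writing each $z_j\in\O_K$ in the $\Z$-basis $\{1,\omega\}$ turns $G$ into a positive definite integral quadratic form $Q$ in six variables over $\Z$, and $G$ being $\Z$-universal means precisely that $Q$ represents every positive integer. Here I would invoke the 290-Theorem of Bhargava and Hanke in place of the 15-Theorem: $Q$ is universal if and only if it represents each number of a fixed finite critical set $S$ with $\max S=290$. This replacement is exactly what is needed, because the 290-Theorem, unlike the 15-Theorem, imposes no classicality hypothesis; apart from it, I would run the proof of Theorem~\ref{thmTernary} essentially verbatim with $S$ in the role of the $15$-critical set.

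The mechanism of that proof is that, viewed as a quadratic form in the $\omega$-coordinates $y_1,y_2,y_3$ of $z_1,z_2,z_3$ alone, $G$ has all Gram entries of size $O(D)$; hence, unless the coefficients of $G$ are tuned so that this leading $\omega$-part degenerates, any $\alpha_j=x_j+y_j\omega$ with some $y_j\neq 0$ already makes $G(\alpha_1,\alpha_2,\alpha_3)$ exceed $290$ once $D$ is large. In that generic situation every representation of an element of $S$ must have $y_1=y_2=y_3=0$, so the rational restriction of $G$ --- a ternary integral $\Z$-form --- itself represents all of $S$ and is therefore universal by the 290-Theorem, which is impossible since no positive definite ternary form over $\Z$ is universal. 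In the remaining, degenerate cases one first separates off the (boundedly many, boundedly large) directions along which $G$ does not grow with $D$ and derives the same kind of contradiction from a form of smaller rank. In every case $D$ is bounded, which together with the first paragraph proves the theorem.

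The step I expect to be the real obstacle is this last case analysis, just as in Theorems~\ref{thmBinary} and~\ref{thmTernary}. The coefficients of $G$ are a priori unbounded and may depend on $D$, so the $\omega$-part of $G$ can be genuinely --- rather than strictly --- positive and $\det Q$ can stay bounded as $D\to\infty$; in particular one cannot simply quote the finiteness of the list of universal forms in four or six variables, and must instead track carefully how the finitely many small values in $S$ split between the bounded ``rational'' part of $G$ and the few directions in which $G$ stays bounded. This bookkeeping, driven now by the 290-Theorem rather than the 15-Theorem, is where the work lies.
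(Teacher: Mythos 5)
Your overall strategy --- reduce to $D \equiv 1 \pmod 4$ and to non-classical forms, then replace the 15-Theorem by the 290-Theorem --- is the right one, and the reduction of the binary case to the ternary case via $G(z_1,z_2)+\Tr(z_3^2)$ is a correct (and rather elegant) shortcut that the paper does not use. But the heart of the proof is missing. You propose to conclude by a growth argument: for large $D$ every representation of an element of the critical set $S$ should have $y_1=y_2=y_3=0$, forcing the rational restriction of $G$ to be a universal ternary form over $\Z$. You then flag the degenerate cases of this argument as ``the real obstacle'' and leave them unresolved. That is a genuine gap: since the coefficients of $G$ may grow with $D$, the cross terms between the $x$- and $y$-coordinates can nearly cancel the $y$-part (compare $(x+Ny)^2$ at $(x,y)=(-N,1)$), and controlling this is exactly the bookkeeping you defer. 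It is also not the mechanism of the paper's proof of \Cref{thmTernary}, so ``running that proof verbatim with $S$'' would not produce the argument you sketch.

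The missing idea is a determinant-divisibility argument, and your stated worry that ``one cannot simply quote the finiteness of the list of universal forms in four or six variables'' and that ``$\det Q$ can stay bounded as $D\to\infty$'' is precisely where you go astray. For binary $G$, \Cref{lemmaDetMQ} gives $\det(M_Q)=D^2\det(M_G)$ with $16\det(M_G)\in\Z_{\geq 1}$, so $\det(M_Q)\geq D^2/16$ is \emph{not} bounded in $D$; on the other hand $Q$ is a universal quaternary form, and by the 290-Theorem there are only $6436$ such forms up to equivalence, so $\det(M_Q)$ is bounded by an absolute constant and hence so is $D$. For ternary $G$ one does not use $\det(M_Q)$ of the senary form at all: the escalation process behind the 290-Theorem shows that the universal form $Q$ contains a quaternary subform $R$ equivalent to one of the $6560$ basic escalators, whose determinants are bounded by an absolute constant; \Cref{propRank} gives $\rank(M_Q\bmod p)\leq 3$ for every prime $p\mid D$, hence $\rank(M_R\bmod p)\leq 3$, so $p\mid 16\det(M_R)$ and, $D$ being squarefree, $D\mid 16\det(M_R)$. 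Both steps are short and require no case analysis; what they need from your setup is only \Cref{propRank} (which holds for integral, not necessarily classical, $G$) and the finiteness statements attached to the 290-Theorem.
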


We will prove these as \Cref{thmBinary2,thmTernary2,thmBinTer2} in \Cref{secBinary}. In contrast with the situation for binary and ternary forms, it is not very difficult to construct a generalized form in four variables which is $ \Z $-universal over every real quadratic field (and we do this later in \Cref{propFourVars}). This provides a fairly complete picture of the minimal rank required for a generalized form to be $ \Z $-universal.

The preceding discussion relates only to forms which are positive definite. As regards indefinite forms, we prove the following in \Cref{thmIndef}: If $ K = \Q(\sqrt{D}) $ where $ D \in \Z_{\geq 2} $ is squarefree, then the binary generalized form
\[
	G(z, w) = z\tau(z)+w\tau(w)
\]
represents every $ a \in \Z $.

If we start with an integral $ \Z $-valued generalized form and express the variables in the integral basis, then we end up with an integral quadratic form over $ \Z $ in twice as many variables (this will be proved formally later). For example, if we let $ z = x_1+y_1\sqrt{2} $ and $ w = x_2+y_2\sqrt{2} $ in~\eqref{eqGex}, then we obtain the quadratic form
\[
	Q(x_1, y_1, x_2, y_2) = x_1^2+6y_1^2+3x_2^2+2y_2^2.
\]
(It can be checked that $ Q $ is universal by the 15-Theorem.) We call $ Q $ the quadratic form \emph{associated} to $ G $. A natural question is which universal quaternary quadratic forms arise in this way. In other words, we ask which quaternary quadratic forms are associated to some $ \Z $-universal binary generalized form. We find all such forms (up to equivalence) for $ D \in \{2, 3 ,6, 7, 10\} $. We also find all quadratic forms associated to \emph{classical} $ \Z $-universal binary generalized forms (again, up to equivalence) for $ D = 5 $.

The rest of the paper is organized as follows. In \Cref{secGenPols}, we find a minimal set of generators of the ring of $ \Q $-valued generalized polynomials. The theory of generalized quadratic forms is developed in \Cref{secGenForms}. We prove~\Cref{thmBinary,thmTernary,thmBinTer} at the end of \Cref{secBinary} and \Cref{thmIndef} in \Cref{secIndef}. In the appendix,
%which can be found at \cite{ChDPPPRRZ}, 
we give the proofs of Lemmata \ref{lemmaExD3}-\ref{lemmaExD10} in Section \ref{SectionFillingInPRoofs55-59} and a more detailed approach to the proof of Theorem \ref{thmIndef} in Section \ref{secIndef2}. In Section \ref{secExamples2} we explicitly show all quaternary forms associated to $ \Z $-universal binary generalized forms for $ D \in \{2, 3, 6, 7, 10\} $ (and to classical ones for $ D = 5 $).

\section*{Acknowledgments}
This research project was accomplished at the Student Number Theory Seminar at Charles University. We thank Nicolas Daans, V\'{i}t\v{e}zslav Kala and Pavlo Yatsyna for their helpful suggestions and the anonymous referee for their constructive feedback.

\section{Generalized polynomials with rational values}
\label{secGenPols}
Throughout the article let $ K = \Q(\sqrt{D}) $ where $ D \in \Z\setminus\{0,1\} $ is squarefree. If we let
\[
    \omega_D = \begin{cases}
        \sqrt{D},&\text{if }D \equiv 2,3\pmod{4},\\
        \frac{1+\sqrt{D}}{2},&\text{if }D \equiv 1\pmod{4},
    \end{cases}
\]
then $ (1, \omega_D) $ is a basis of $ \O_K $.  For $ \alpha \in K $, we let $ \Tr(\alpha) = \alpha+\tau(\alpha) $ and $ \Nm(\alpha) = \alpha\tau(\alpha) $ be the \emph{trace} and \emph{norm} of $ \alpha $, respectively.

We begin by defining generalized polynomials over a quadratic field.

\begin{definition}
\label{defGenPol}
Let $ K = \Q(\sqrt{D}) $ where $ D \in \Z\setminus\{-1,0\} $ is squarefree. A \emph{generalized polynomial} in $ n $ variables over $ K $ is an expression of the form
\begin{equation}
\label{eqGenPol}
    g(z_1, z_2, \dots, z_n) = \sum_{\substack{i_1, j_1, \dots, i_n, j_n \geq 0\\i_1+j_1+\cdots+i_n+j_n \leq k}} \alpha_{i_1, j_1, \dots i_n, j_n}z_1^{i_1}\tau(z_1)^{j_1}\cdots z_n^{i_n}\tau(z_n)^{j_n}
\end{equation}
where $ k \in \Z_{\geq 0} $, $ \alpha_{i_1, j_1, \dots, i_n, j_n} \in K $ for every $ i_1, j_1, \dots, i_n, j_n \in \{0, 1, \dots, k\} $ and $\tau: K \hookrightarrow \C$ where $\tau(a+b\sqrt{D}) = a-b\sqrt{D}$ is the conjugation map.
\end{definition}

 We will often use the multiindex notation, where $ \mathbf{i} = (i_1, i_2, \dots, i_n) \in \Z_{\geq 0}^n $ is a multiindex,  $ |\mathbf{i}| = \sum_{r=1}^n i_r $ is the sum of its components, and for $ \mathbf{x} = (x_1, x_2, \dots, x_n) $ we let $ \mathbf{x}^\mathbf{i} = x_1^{i_1}x_2^{i_2}\cdots x_n^{i_n} $. 
 The lexicographic order on $ \Z_{\geq 0}^n $ is defined as follows: Let $ \mathbf{i}, \mathbf{j} \in \Z_{\geq 0}^n $, $ \mathbf{i} = (i_1, i_2, \dots, i_n) $ and $ \mathbf{j} = (j_1, j_2, \dots, j_n)$. Then $ \mathbf{i}<\mathbf{j} $ if and only if there exists $ s \in \{1, 2, \dots, n\} $ such that $ i_r = j_r $ for $ 1 \leq r < s $ and $ i_s < j_s $.

Hence if $ \mathbf{z} = (z_1, z_2, \dots, z_n) $, then we let $ \tau(\mathbf{z}) = (\tau(z_1), \tau(z_2), \dots, \tau(z_n)) $ and we can write the generalized polynomial $ g $ as
\[
    g(\mathbf{z}) = \sum_{|\mathbf{i}|+|\mathbf{j}|\leq k}\alpha_{\mathbf{i}\mathbf{j}}\mathbf{z}^\mathbf{i}\tau(\mathbf{z})^{\mathbf{j}}.
\]

If $ g $ is a generalized polynomial of the form~\eqref{eqGenPol}, then we define the \emph{degree} of $ g $ as
\[
    \deg(g) = \max\{|\mathbf{i}|+|\mathbf{j}|:\; \alpha_{\mathbf{i}\mathbf{j}}\neq 0\}.
\]

We see that $ g $ is a generalized polynomial in $ n $ variables if and only if there exists a polynomial $ \tilde{g} $ in $ 2n $ variables such that
\[
    g(z_1, z_2, \dots, z_n) = \tilde{g}(z_1, \tau(z_1), \dots, z_n, \tau(z_n)).
\]

We are particularly interested in generalized polynomials $ g $ with values in $ \Q $. They are characterized by the property $ g(\mathbf{a}) = \tau(g(\mathbf{a})) $ for every $ \mathbf{a} \in K^n $. We prove below that the coefficients of the two conjugate terms $ \mathbf{z}^{\mathbf{i}}\tau(\mathbf{z})^{\mathbf{j}} $ and $ \tau(\mathbf{z})^{\mathbf{j}}\mathbf{z}^{\mathbf{i}} $ in such a polynomial must be conjugate, i.e., $ \alpha_{\mathbf{j}\mathbf{i}} = \tau(\alpha_{\mathbf{i}\mathbf{j}}) $.

\begin{lemma}
\label{lemmaGenPolZero}
Let $ K = \Q(\sqrt{D}) $ where $ D \in \Z\setminus\{0, 1\} $ is squarefree and let $ g $ be a generalized polynomial in $ n $ variables over $ K $. If $ g(\mathbf{a}) = 0 $ for every $ \mathbf{a} \in K^n $, then $ g $ is the zero polynomial.
\end{lemma}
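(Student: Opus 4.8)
The plan is to reduce the statement to the classical fact that a nonzero polynomial over an infinite field cannot vanish identically on a full Cartesian power of an infinite subset. Recall that the generalized polynomial $ g $ comes from an honest polynomial $ \tilde{g} $ in $ 2n $ variables over $ K $ with $ g(z_1, \dots, z_n) = \tilde{g}(z_1, \tau(z_1), \dots, z_n, \tau(z_n)) $, and that asserting that $ g $ \emph{is the zero polynomial} is exactly asserting that $ \tilde{g} = 0 $. So it suffices to show: if $ g(\mathbf{a}) = 0 $ for all $ \mathbf{a} \in K^n $, then $ \tilde{g} = 0 $.

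The key step is an invertible linear change of variables. Writing $ \tilde{g} $ in the variables $ u_1, v_1, \dots, u_n, v_n $, I would substitute $ u_i = x_i + y_i\sqrt{D} $ and $ v_i = x_i - y_i\sqrt{D} $. Since $ \sqrt{D} \neq 0 $, this substitution extends to a $ K $-algebra isomorphism $ K[u_1, v_1, \dots, u_n, v_n] \to K[x_1, y_1, \dots, x_n, y_n] $, whose inverse sends $ x_i \mapsto (u_i+v_i)/2 $ and $ y_i \mapsto (u_i-v_i)/(2\sqrt{D}) $; let $ h $ denote the image of $ \tilde{g} $. Then $ \tilde{g} = 0 $ if and only if $ h = 0 $. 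On the other hand, for rational $ x_i, y_i $ one has $ \tau(x_i + y_i\sqrt{D}) = x_i - y_i\sqrt{D} $, so
\[
    h(x_1, y_1, \dots, x_n, y_n) = g(x_1 + y_1\sqrt{D}, \dots, x_n + y_n\sqrt{D}),
\]
which by hypothesis equals $ 0 $ for every $ (x_1, y_1, \dots, x_n, y_n) \in \Q^{2n} $.

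To finish I would invoke the standard fact that a polynomial in $ K[t_1, \dots, t_m] $ which vanishes on all of $ \Q^m $ must be $ 0 $ (applied with $ m = 2n $). This goes by induction on $ m $: for $ m = 1 $ a nonzero polynomial has only finitely many roots while $ \Q $ is infinite; for the inductive step, expand in powers of $ t_m $, fix the first $ m-1 $ coordinates at an arbitrary point of $ \Q^{m-1} $, apply the one-variable case to see that each coefficient polynomial vanishes on $ \Q^{m-1} $, and use the inductive hypothesis. Hence $ h = 0 $, so $ \tilde{g} = 0 $, so $ g $ is the zero polynomial.

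I do not expect a genuine obstacle. The only mild subtlety is that the coefficients of $ h $ lie in $ K $ whereas the vanishing locus under consideration is $ \Q^m $; this is harmless because $ \Q $ is infinite, so the induction goes through verbatim. The one idea worth isolating is that, after the substitution above, evaluating a generalized polynomial at the points of $ K^n $ is literally the same as evaluating an ordinary $ 2n $-variable polynomial at the points of $ \Q^{2n} $.
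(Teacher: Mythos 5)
Your proof is correct, but it takes a genuinely different route from the paper. You replace the analytic argument by a purely algebraic one: the invertible substitution $u_i = x_i + y_i\sqrt{D}$, $v_i = x_i - y_i\sqrt{D}$ turns the hypothesis ``$\tilde{g}$ vanishes at all points $(z_1,\tau(z_1),\dots,z_n,\tau(z_n))$ with $z_i \in K$'' into ``$h$ vanishes on $\Q^{2n}$,'' and then the standard fact that a polynomial over a field vanishing on the full Cartesian power of an infinite subset is zero finishes the job. The paper instead embeds $K$ into the Minkowski space $V$, uses that $\sigma(K)^n$ is topologically dense in $\R^{2n}$ (real case) or that $K^n$ is dense in $\C^n$ (imaginary case), and concludes by continuity of polynomials; the imaginary case there requires an auxiliary change of variables quite similar in spirit to yours. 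What your approach buys is uniformity -- no case split between real and imaginary $D$, and no appeal to topology or the Minkowski embedding -- at the cost of carrying out the (routine) induction showing that vanishing on $\Q^m$ forces a polynomial with coefficients in $K$ to be zero. Your observation that evaluating $g$ on $K^n$ is literally evaluating $h$ on $\Q^{2n}$ is the right key point, and the substitution is indeed a $K$-algebra isomorphism since $2$ and $\sqrt{D}$ are invertible in $K$.
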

\begin{proof}
Let $ \tilde{g} $ be a polynomial in $ 2n $ variables such that
\[
    g(z_1, z_2, \dots, z_n) = \tilde{g}(z_1,\tau(z_1),\dots, z_n,\tau(z_n)).
\]

First, suppose that $ K $ is real. We know that $ \sigma(K) = \{(\alpha, \tau(\alpha)):\; \alpha\in K\} $ is dense in $ V = \R^2 $. Consequently, $ \sigma(K)^n $ is dense in $ \R^{2n} $. Because the polynomial $ \tilde{g} $ vanishes on $ \sigma(K)^n $, it vanishes on the whole set $ \R^{2n} $, and hence $ \tilde{g} $ is the zero polynomial.

Secondly, suppose that $ K $ is imaginary. In this case, $ \sigma(K) = K $ is dense in $ V = \C $. Thus $ K^n $ is dense in $ \C^n $ and the polynomial $ \tilde{g} $ is zero on the set
\[
    \{(z_1, \overline{z_1}, \dots, z_n, \overline{z_n}):\; z_1, \dots, z_n \in K\}.
\]

Let $ h $ be another polynomial in $ 2n $ variables defined by
\[
    h(x_1, y_1, \dots, x_n, y_n) = \tilde{g}(x_1+iy_1, x_1-iy_1, \dots, x_n+iy_n, x_n-iy_n).
\]
Since $ h $ vanishes on $ \R^{2n} $ and
\[
    g(z_1, z_2, \dots, z_n) = h\left(\frac{z_1+\overline{z_1}}{2},\frac{z_1-\overline{z_1}}{2i}, \dots, \frac{z_n+\overline{z_n}}{2}, \frac{z_n-\overline{z_n}}{2i}\right),
\]
$ g $ is the zero polynomial.
\end{proof}

\begin{proposition}
\label{propGenPol}
Let $ K = \Q(\sqrt{D}) $ where $ D \in \Z\setminus\{0,1\} $ is squarefree and let $ g $ be a generalized polynomial in $ n $ variables over $ K $ given by~\eqref{eqGenPol}. We have $ g(\mathbf{a}) \in \Q $ for every $ \mathbf{a} \in K^n $ if and only if $ \alpha_{\mathbf{j}\mathbf{i}} = \tau(\alpha_{\mathbf{i}\mathbf{j}}) $ for every $ \mathbf{i}, \mathbf{j} \in \Z_{\geq 0}^n $.
\end{proposition}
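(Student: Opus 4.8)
The plan is to derive the proposition from \Cref{lemmaGenPolZero} together with the standard fact that $\Q$ is exactly the subfield of $K$ fixed by $\tau$; thus $g(\mathbf{a}) \in \Q$ if and only if $g(\mathbf{a}) = \tau(g(\mathbf{a}))$. Everything then hinges on a single computation. For $\mathbf{a} \in K^n$, apply $\tau$ to $g(\mathbf{a}) = \sum_{\mathbf{i},\mathbf{j}} \alpha_{\mathbf{i}\mathbf{j}} \mathbf{a}^{\mathbf{i}} \tau(\mathbf{a})^{\mathbf{j}}$. Since $\tau$ is a field automorphism with $\tau^2 = \mathrm{id}$, we have $\tau(\mathbf{a}^{\mathbf{i}}) = \tau(\mathbf{a})^{\mathbf{i}}$ and $\tau(\tau(\mathbf{a})^{\mathbf{j}}) = \mathbf{a}^{\mathbf{j}}$, whence $\tau(g(\mathbf{a})) = \sum_{\mathbf{i},\mathbf{j}} \tau(\alpha_{\mathbf{i}\mathbf{j}}) \tau(\mathbf{a})^{\mathbf{i}} \mathbf{a}^{\mathbf{j}}$. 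Relabelling the summation indices $\mathbf{i} \leftrightarrow \mathbf{j}$ rewrites this as $\sum_{\mathbf{i},\mathbf{j}} \tau(\alpha_{\mathbf{j}\mathbf{i}}) \mathbf{a}^{\mathbf{i}} \tau(\mathbf{a})^{\mathbf{j}}$, i.e.\ as the value at $\mathbf{a}$ of the generalized polynomial $g^{\tau}(\mathbf{z}) := \sum_{\mathbf{i},\mathbf{j}} \tau(\alpha_{\mathbf{j}\mathbf{i}})\, \mathbf{z}^{\mathbf{i}} \tau(\mathbf{z})^{\mathbf{j}}$. So the identity $\tau(g(\mathbf{a})) = g^{\tau}(\mathbf{a})$ holds for every $\mathbf{a} \in K^n$.

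For the ``if'' direction, assume $\alpha_{\mathbf{j}\mathbf{i}} = \tau(\alpha_{\mathbf{i}\mathbf{j}})$ for all $\mathbf{i}, \mathbf{j}$. Applying $\tau$ to this gives $\tau(\alpha_{\mathbf{j}\mathbf{i}}) = \alpha_{\mathbf{i}\mathbf{j}}$, so $g^{\tau}$ and $g$ have the same coefficient $\alpha_{\mathbf{i}\mathbf{j}}$ on each monomial $\mathbf{z}^{\mathbf{i}}\tau(\mathbf{z})^{\mathbf{j}}$, i.e.\ $g^{\tau} = g$. By the computation above, $\tau(g(\mathbf{a})) = g^{\tau}(\mathbf{a}) = g(\mathbf{a})$ for all $\mathbf{a} \in K^n$, hence $g(\mathbf{a}) \in \Q$.

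For the ``only if'' direction, assume $g(\mathbf{a}) \in \Q$ for all $\mathbf{a} \in K^n$. Then $g(\mathbf{a}) - g^{\tau}(\mathbf{a}) = g(\mathbf{a}) - \tau(g(\mathbf{a})) = 0$ for every $\mathbf{a}$, so the generalized polynomial $g - g^{\tau}$ vanishes identically on $K^n$; by \Cref{lemmaGenPolZero} it is the zero polynomial. The passage from a generalized polynomial to the associated polynomial $\tilde g$ in $2n$ variables is a bijection that identifies the coefficient of $\mathbf{z}^{\mathbf{i}}\tau(\mathbf{z})^{\mathbf{j}}$ with that of the corresponding monomial in $2n$ variables, so vanishing of $g - g^{\tau}$ forces each of its coefficients to vanish: $\alpha_{\mathbf{i}\mathbf{j}} - \tau(\alpha_{\mathbf{j}\mathbf{i}}) = 0$ for all $\mathbf{i}, \mathbf{j}$. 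Applying $\tau$ yields $\tau(\alpha_{\mathbf{i}\mathbf{j}}) = \alpha_{\mathbf{j}\mathbf{i}}$, which is the asserted relation.

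The argument is essentially bookkeeping and I do not anticipate a real obstacle. The only point needing a little care is the last step of the converse: one must be explicit that vanishing of a generalized polynomial on all of $K^n$ forces each coefficient in the expansion \eqref{eqGenPol} to be zero. This uses the uniqueness of the representation \eqref{eqGenPol} (equivalently, the injectivity of $g \mapsto \tilde g$) on top of \Cref{lemmaGenPolZero} itself, and it is worth spelling out so that the deduction $g - g^{\tau} = 0 \Rightarrow \alpha_{\mathbf{i}\mathbf{j}} = \tau(\alpha_{\mathbf{j}\mathbf{i}})$ is not left implicit.
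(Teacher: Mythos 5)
Your proof is correct and follows essentially the same route as the paper: characterize $g(\mathbf{a})\in\Q$ via $g(\mathbf{a})=\tau(g(\mathbf{a}))$, form the conjugate polynomial (your $g^{\tau}$ is the paper's $\tau(g)$ after relabelling indices), and apply \Cref{lemmaGenPolZero} to $g-\tau(g)$ to pass from pointwise vanishing to vanishing of coefficients. Your extra remark about the injectivity of $g\mapsto\tilde g$ only makes explicit a step the paper leaves implicit.
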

\begin{proof}
Consider the two generalized polynomials
\begin{align*}
    g(\mathbf{z})& = \sum_{|\mathbf{i}|+|\mathbf{j}|\leq k} \alpha_{\mathbf{i}\mathbf{j}}\mathbf{z}^\mathbf{i}\tau(\mathbf{z})^\mathbf{j} = \sum_{|\mathbf{i}|+|\mathbf{j}|\leq k} \alpha_{\mathbf{j}\mathbf{i}}\mathbf{z}^\mathbf{j}\tau(\mathbf{z})^\mathbf{i}
\end{align*}
and
\[
    \tau(g)(\mathbf{z}) = \sum_{|\mathbf{i}|+|\mathbf{j}|\leq k} \tau(\alpha_{\mathbf{i}\mathbf{j}})\tau(\mathbf{z})^\mathbf{i}\mathbf{z}^\mathbf{j}.
\]

The condition $ g(\mathbf{a}) \in \Q $ for every $ \mathbf{a} \in K $ is equivalent to the condition $ g(\mathbf{a})=\tau(g)(\mathbf{a}) $ for every $ \mathbf{a} \in K $. By~\Cref{lemmaGenPolZero}, this is equivalent to $ g-\tau(g) $ being the zero polynomial, which is in turn equivalent to $ \alpha_{\mathbf{j}\mathbf{i}} = \tau(\alpha_{\mathbf{i}\mathbf{j}}) $ for every $ \mathbf{i}, \mathbf{j} \in \Z_{\geq 0}^n $.
\end{proof}

Let $ K $ be a quadratic field and let $ \Gamma_K^n $ be the ring of $\Q$-valued generalized polynomials in $ n $ variables, i.e.,
\[
    \Gamma_K^n = \left\{g(z_1, \dots, z_n) \in K[z_1, \tau(z_1), \dots, z_n, \tau(z_n)]:\; g(\mathbf{a})\in \Q\text{ for every }\mathbf{a}\in K^n\right\}.
\]

For $ d \in \Z_{\geq 0} $, let
\[
    \Gamma_K^{n,d} = \left\{g \in \Gamma_K^n:\; \deg(g)\leq d\right\}.
\]
Clearly, $ \Gamma_K^{n,d} $ is a vector space over $\Q$.

The main theorem of this section is the following one.

\begin{theorem}
\label{thmBasis}
Let $ K = \Q(\sqrt{D}) $ where $ D \in \Z\setminus\{0,1\} $ is squarefree. If $ n \in \Z_{\geq 1} $, then
\[
    \Gamma_K^n = \Q\left[z_1+\tau(z_1),\sqrt{D}(z_1-\tau(z_1)), \dots, z_n+\tau(z_n), \sqrt{D}(z_n-\tau(z_n))\right].
\]
\end{theorem}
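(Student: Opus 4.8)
The plan is to reduce the statement to an elementary fact about ordinary polynomials over $\Q$ by means of a linear change of variables. Write $R$ for the $\Q$-algebra on the right-hand side, and for $1 \le i \le n$ put $s_i = z_i + \tau(z_i)$ and $t_i = \sqrt{D}\,(z_i - \tau(z_i))$. For each $i$ the pair $(z_i, \tau(z_i))$ is carried to $(s_i, t_i)$ by the $K$-linear map with matrix $\left(\begin{smallmatrix} 1 & 1 \\ \sqrt{D} & -\sqrt{D} \end{smallmatrix}\right)$, whose determinant $-2\sqrt{D}$ is nonzero; hence this substitution is invertible, the ring $K[z_1, \tau(z_1), \dots, z_n, \tau(z_n)]$ equals the polynomial ring $K[s_1, t_1, \dots, s_n, t_n]$, and the $s_i, t_i$ are algebraically independent over $K$. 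In particular $R = \Q[s_1, t_1, \dots, s_n, t_n]$, so it suffices to prove $\Gamma_K^n = \Q[s_1, t_1, \dots, s_n, t_n]$.

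For the inclusion $R \subseteq \Gamma_K^n$: since $\Gamma_K^n$ is a ring, it is enough to check that each generator is $\Q$-valued. Evaluating at $\mathbf{a} \in K^n$, the polynomial $s_i$ yields $\Tr(a_i) \in \Q$; and $t_i$ yields $\sqrt{D}\,(a_i - \tau(a_i))$, which lies in $\Q$ because $a_i - \tau(a_i)$ has zero trace and therefore belongs to $\Q\sqrt{D}$.

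For the reverse inclusion, take $g \in \Gamma_K^n$ and, using the first paragraph, write it as a $K$-polynomial in the new variables, $g = \sum_{\mathbf{p}, \mathbf{q}} \beta_{\mathbf{p}\mathbf{q}}\, \mathbf{s}^{\mathbf{p}} \mathbf{t}^{\mathbf{q}}$ with $\beta_{\mathbf{p}\mathbf{q}} \in K$; I must show every $\beta_{\mathbf{p}\mathbf{q}} \in \Q$. Evaluating at $a_i = x_i + y_i\sqrt{D}$ with $x_i, y_i \in \Q$ sends $s_i$ to $2x_i$ and $t_i$ to $2Dy_i$, and as $\mathbf{a}$ ranges over $K^n$ the point $(2x_1, 2Dy_1, \dots, 2x_n, 2Dy_n)$ ranges over all of $\Q^{2n}$. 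Thus the hypothesis $g(\mathbf{a}) \in \Q$ for all $\mathbf{a}$ says precisely that the polynomial $P = \sum_{\mathbf{p},\mathbf{q}} \beta_{\mathbf{p}\mathbf{q}}\, \mathbf{u}^{\mathbf{p}} \mathbf{v}^{\mathbf{q}} \in K[u_1, v_1, \dots, u_n, v_n]$ takes values in $\Q$ at every point of $\Q^{2n}$. Writing $\beta_{\mathbf{p}\mathbf{q}} = b_{\mathbf{p}\mathbf{q}} + c_{\mathbf{p}\mathbf{q}}\sqrt{D}$ with $b_{\mathbf{p}\mathbf{q}}, c_{\mathbf{p}\mathbf{q}} \in \Q$, the polynomial $\sum_{\mathbf{p},\mathbf{q}} c_{\mathbf{p}\mathbf{q}}\, \mathbf{u}^{\mathbf{p}} \mathbf{v}^{\mathbf{q}} \in \Q[\mathbf{u}, \mathbf{v}]$ then vanishes on all of $\Q^{2n}$, hence is the zero polynomial since $\Q$ is infinite; so every $c_{\mathbf{p}\mathbf{q}} = 0$, i.e.\ $\beta_{\mathbf{p}\mathbf{q}} \in \Q$. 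Therefore $g \in \Q[s_1, t_1, \dots, s_n, t_n] = R$, completing the proof.

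The content of the argument lies entirely in the change of variables, and the step needing the most care is verifying that $(z_i, \tau(z_i)) \leftrightarrow (s_i, t_i)$ is a genuine invertible change of polynomial generators and that these new coordinates sweep out all of $\Q$ as $z_i$ sweeps out $K$ — this is what lets ``$\Q$-valued on $K^n$'' be transported to a statement about ordinary polynomials over the infinite field $\Q$. (Alternatively one could derive $\Gamma_K^n \subseteq R$ from the coefficient condition $\alpha_{\mathbf{j}\mathbf{i}} = \tau(\alpha_{\mathbf{i}\mathbf{j}})$ of \Cref{propGenPol} by expanding each admissible pair of conjugate monomials in terms of the $s_i, t_i$, but the direct computation above avoids that bookkeeping.)
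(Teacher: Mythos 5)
Your proof is correct, and it takes a genuinely different route from the paper's. The paper first proves the case $n=1$ by induction on the degree (\Cref{lemmaBasis1}), using identities such as $z\tau(z)=\tfrac{1}{2}\bigl((z+\tau(z))^2-(z-\tau(z))^2\bigr)$ and the decompositions of $z^d+\tau(z)^d$ and $\sqrt{D}(z^d-\tau(z)^d)$; it then exhibits an explicit $\Q$-basis of $\Gamma_K^{n,d}$ (\Cref{lemmaBasis2}) and finishes by a second induction on $n$. You instead observe that $(z_i,\tau(z_i))\mapsto(s_i,t_i)$ is an invertible linear change of polynomial generators and that, writing $a_i=x_i+y_i\sqrt{D}$, the evaluation sends $(s_i,t_i)$ to $(2x_i,2Dy_i)$, which sweeps out all of $\Q^{2n}$ as $\mathbf{a}$ ranges over $K^n$; the whole statement then collapses to the fact that a polynomial over the infinite field $\Q$ vanishing identically on $\Q^{2n}$ is the zero polynomial. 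This is shorter, avoids both inductions and the conjugate-coefficient bookkeeping of \Cref{propGenPol}, and even renders the density argument of \Cref{lemmaGenPolZero} unnecessary for this particular theorem, since your parametrization of the evaluation points by $\Q^{2n}$ does that work directly. As a bonus, your argument shows that the $2n$ generators are algebraically independent over $\Q$, so $\Gamma_K^n$ is actually a polynomial ring --- which substantiates the ``minimal set of generators'' phrasing in the introduction. What the paper's longer route buys is the explicit $\Q$-basis of each finite-dimensional piece $\Gamma_K^{n,d}$ in \Cref{lemmaBasis2}, though that basis is not needed elsewhere in the paper.
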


The first step is to prove the theorem for $ n = 1 $. In this case, we denote the single variable by $ z $ instead of $ z_1 $, thus
\[
    \Gamma_K^1 = \{g(z)\in K[z, \tau(z)]:\; g(\alpha)\in \Q\text{ for every }\alpha \in K\}.
\]

\begin{lemma}
\label{lemmaBasis1}
If $ K = \Q(\sqrt{D}) $ where $ D \in \Z\setminus\{0,1\} $ is squarefree, then $ \Gamma_K^1 = \Q\left[z+\tau(z), \sqrt{D}(z-\tau(z))\right] $.
\end{lemma}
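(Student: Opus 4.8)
The plan is to prove the two inclusions separately; the first is routine and the second is the heart of the matter.

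\textbf{The inclusion $\Q[z+\tau(z),\sqrt D(z-\tau(z))]\subseteq\Gamma_K^1$.} Since $\Gamma_K^1$ is a $\Q$-algebra (it is closed under addition and multiplication of generalized polynomials and contains the constants), it suffices to check that the two proposed generators lie in it. This is immediate: for every $\alpha\in K$ we have $\alpha+\tau(\alpha)=\Tr(\alpha)\in\Q$, and writing $\alpha=a+b\sqrt D$ with $a,b\in\Q$ gives $\sqrt D(\alpha-\tau(\alpha))=2bD\in\Q$.

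\textbf{The inclusion $\Gamma_K^1\subseteq\Q[z+\tau(z),\sqrt D(z-\tau(z))]$.} The idea is a linear change of variables that diagonalizes the conjugation. Regard a generalized polynomial in one variable as $g=\tilde g(z,\tau(z))$ for a genuine polynomial $\tilde g\in K[X,Y]$, and put $u=X+Y$, $v=X-Y$, so that $K[X,Y]=K[u,v]$ and interchanging $X\leftrightarrow Y$ fixes $u$ and sends $v\mapsto -v$. The condition $g\in\Gamma_K^1$, i.e.\ $g(\alpha)\in\Q$ for all $\alpha\in K$, is equivalent to $g(\alpha)=\tau(g)(\alpha)$ for all $\alpha$, where $\tau(g)$ is the generalized polynomial obtained from $\tilde g$ by conjugating the coefficients and swapping the two arguments. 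Applying \Cref{lemmaGenPolZero} to $g-\tau(g)$, this is equivalent to the polynomial identity $\tilde g(X,Y)=\tilde g^\tau(Y,X)$, where $\tilde g^\tau$ denotes coefficient-wise conjugation. Writing $\tilde g=\sum_{k,l}c_{kl}u^kv^l$ with $c_{kl}\in K$ and translating this identity into the $(u,v)$-coordinates, it becomes $c_{kl}=(-1)^l\tau(c_{kl})$ for all $k,l$. Hence $c_{kl}\in\Q$ when $l$ is even and $c_{kl}\in\sqrt D\,\Q$ when $l$ is odd.

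It remains to substitute back. Set $s=z+\tau(z)$ and $t=\sqrt D(z-\tau(z))$, so that $u=s$ and $v=t/\sqrt D$; in particular $v^2=t^2/D\in\Q[s,t]$. For every $k,l$ one computes $c_{kl}u^kv^l=c_{kl}D^{-l/2}s^kt^l$. If $l$ is even, the coefficient $c_{kl}D^{-l/2}$ is rational; if $l$ is odd, writing $c_{kl}=e_{kl}\sqrt D$ with $e_{kl}\in\Q$ makes the coefficient $e_{kl}D^{(1-l)/2}\in\Q$. Summing over all monomials shows $g\in\Q[s,t]$, completing the proof.

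\textbf{Expected difficulty.} There is no serious obstacle; the points requiring care are that the passage from ``$g$ takes rational values'' to a genuine \emph{polynomial} identity relies on \Cref{lemmaGenPolZero} rather than a naive coefficient comparison, and that one must track how the factor $\sqrt D$ migrates between the coefficient field and the generator $t$ when switching between the bases $\{u^kv^l\}$ and $\{s^kt^l\}$ (the parity of $l$ is exactly what makes this work). An alternative proof avoids the change of variables and instead starts from the relation $\alpha_{ji}=\tau(\alpha_{ij})$ of \Cref{propGenPol}, pairs conjugate monomials $\alpha_{ij}z^i\tau(z)^j+\tau(\alpha_{ij})z^j\tau(z)^i$, factors out powers of $z\tau(z)=\tfrac{s^2}{4}-\tfrac{t^2}{4D}$, and expands the residual pure powers $z^m\pm\tau(z)^m$ via the binomial theorem; this works but is more cumbersome.
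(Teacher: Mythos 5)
Your proof is correct, and it takes a genuinely different route from the paper's. The paper proves the hard inclusion by induction on $\deg(g)$: it first checks $z\tau(z)\in S_1$, splits $g$ into the part divisible by $z\tau(z)$ and the pure-power part $\sum_i\bigl(\alpha_{i,0}z^i+\tau(\alpha_{i,0})\tau(z)^i\bigr)$, and then reduces $z^d+\tau(z)^d$ and $\sqrt D(z^d-\tau(z)^d)$ to lower degree via a binomial identity and a geometric-sum factorization. You instead diagonalize the swap involution by the linear change of variables $u=X+Y$, $v=X-Y$ on $K[X,Y]$, so that the condition $\tilde g(X,Y)=\tilde g^\tau(Y,X)$ coming from \Cref{lemmaGenPolZero} becomes the clean coefficient condition $c_{kl}=(-1)^l\tau(c_{kl})$, and the parity of $l$ dictates whether $c_{kl}$ lies in $\Q$ or in $\sqrt D\,\Q$; the factor $D^{\pm l/2}$ that appears when rewriting $v^l$ in terms of $t=\sqrt D(z-\tau(z))$ is rational in either case. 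Your argument avoids induction entirely and, as a bonus, shows that $z+\tau(z)$ and $\sqrt D(z-\tau(z))$ are algebraically independent and that the monomials $s^kt^l$ form a $\Q$-basis of $\Gamma_K^1$, which is essentially the one-variable content of \Cref{lemmaBasis2} in a different basis; the same change of variables applied to each pair of conjugate variables (with the condition becoming $c_{\mathbf{k}\mathbf{l}}=(-1)^{|\mathbf{l}|}\tau(c_{\mathbf{k}\mathbf{l}})$) would in fact yield \Cref{thmBasis} directly, bypassing the paper's double induction. What the paper's more computational proof buys is that it stays entirely within the formalism of generalized polynomials and sets up the explicit reduction steps reused in the multivariable argument.
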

\begin{proof}
Let $ S_1 = \Q\left[z+\tau(z), \sqrt{D}(z-\tau(z))\right] $. We have $ S_1 \subset \Gamma_K^1 $ because $ z+\tau(z) $ and $ \sqrt{D}(z-\tau(z)) $ are $ \Q $-valued, and we want to prove $ \Gamma_K^1 \subset S_1 $.

We claim that $ z\tau(z) \in S_1 $. Since
\[
    D(z-\tau(z))^2 = \left(\sqrt{D}(z-\tau(z))\right)^2 \in S_1,
\]
we have also $ (z-\tau(z))^2 \in S_1 $, and hence
\[
    z\tau(z) = \frac{1}{2}\left((z+\tau(z))^2-(z-\tau(z))^2\right) \in S_1.
\]
This proves the claim.

Let $ g \in \Gamma_K^1 $. We prove $ g \in S_1 $ by induction on $ d = \deg(g) $. If $ d = 0 $, then $ g(z) = a \in \Q $ is a constant polynomial. Thus we assume that $ d \geq 1 $ and that the $\Q$-valued generalized polynomials of degree $ < d $ are contained in $ S_1 $. We write $ g $ as
\[
    g(z) = \sum_{\substack{i, j \geq 0\\i+j \leq d}}\alpha_{i,j}z^i\tau(z)^j = \sum_{\substack{i, j \geq 1\\i+j \leq d}}\alpha_{i,j}z^i\tau(z)^j+\sum_{i=0}^d \alpha_{i,0}z^i+\sum_{j=0}^d \alpha_{0,j}\tau(z)^j.
\]
By~\Cref{propGenPol}, $ \alpha_{j,i} = \tau(\alpha_{i,j}) $. Let
\begin{align*}
    g_1(z)& = \sum_{\substack{i, j \geq 1\\i+j \leq d}}\alpha_{i,j}z^i\tau(z)^j,\\
    g_2(z)& = \sum_{i=0}^d \alpha_{i,0}z^i+\tau(\alpha_{i,0})\tau(z)^i,
\end{align*}
so that $ g(z) = g_1(z)+g_2(z) $. We have $ g_1(z) = z\tau(z)\cdot h_1(z) $ where
\[
    h_1(z) = \sum_{\substack{i, j \geq 1\\i+j \leq d}}\alpha_{i,j}z^{i-1}\tau(z)^{j-1}
\]
is a $ \Q $-valued generalized polynomial of degree $ <d $. By the inductive hypothesis, $ h_1 \in S_1 $, and because we also know that $ z\tau(z) \in S_1 $, we get $ g_1 \in S_1 $.

Next, we have $ g_2(z) = \alpha_{d,0}z^d+\tau(\alpha_{d,0})\tau(z)^d+h_2(z) $ where
\[
    h_2(z) = \sum_{i=0}^{d-1}\alpha_{i,0}z^i+\tau(\alpha_{i,0})\tau(z)^i
\]
is a $ \Q $-valued generalized polynomial of degree $ < d $. By the inductive hypothesis, $ h_2 \in S_2 $. We show $\alpha_{d,0}z^d+\tau(\alpha_{d,0})\tau(z)^d \in S_1 $. If we let $ \alpha_{d,0} = a+b\sqrt{D} $ for $ a, b \in \Q $, then
\[  \alpha_{d,0}z^d+\tau(\alpha_{d,0})\tau(z)^d = a(z^d+\tau(z)^d)+b\sqrt{D}(z^d-\tau(z)^d).
\]

We have $ z^d+\tau(z)^d = (z+\tau(z))^d-z\tau(z)\cdot r_1(z) $ where
\[
    r_1(z) = \sum_{i=1}^{d-1}\binom{d}{i}z^{i-1}\tau(z)^{d-i-1}
\]
is a $ \Q $-valued generalized polynomial of degree $ < d $, hence $ r_1 \in S_1 $. Because $ (z+\tau(z))^d $ and $ z\tau(z) $ also belong to $ S_1 $, we get $ z^d+\tau(z)^d \in S_1 $.

We also have $ \sqrt{D}(z^d-\tau(z)^d) = \sqrt{D}(z-\tau(z))\cdot r_2(z) $ where
\[
    r_2(z) = \sum_{i=0}^{d-1}z^{d-1-i}\tau(z)^i
\]
is a $ \Q $-valued generalized polynomial of degree $ < d $, hence $ r_2 \in S_1 $. Because $ \sqrt{D}(z-\tau(z))\in S_1 $, we get $ \sqrt{D}(z^d-\tau(z)^d) \in S_1 $. This proves $ \alpha_{d,0}z^d+\tau(\alpha_{d,0})\tau(z)^d \in S_1 $, thus $ g_2 \in S_1 $.

Finally, because $ g(z) = g_1(z)+g_2(z) $, we get $ g \in S_1 $.
\end{proof}

Next, we find a basis of the vector space $ \Gamma_K^{n,d} $ over $ \Q $. Let $ n \in \Z_{\geq 1} $ and $ d \in \Z_{\geq 0} $. We define the polynomials $ g_{\mathbf{i}\mathbf{j}} $ for $ \mathbf{i}, \mathbf{j} \in \Z_{\geq 0}^n $, $ |\mathbf{i}|+|\mathbf{j}| \leq d $ as follows: If $ \mathbf{i}<\mathbf{j} $, then
\begin{align*}
    g_{\mathbf{i}\mathbf{j}}(\mathbf{z})& = \mathbf{z}^\mathbf{i}\tau(\mathbf{z})^\mathbf{j}+\tau(\mathbf{z})^\mathbf{i}\mathbf{z}^\mathbf{j},\\
    g_{\mathbf{j}\mathbf{i}}(\mathbf{z})& = \sqrt{D}\left(\mathbf{z}^\mathbf{i}\tau(\mathbf{z})^\mathbf{j}-\tau(\mathbf{z})^\mathbf{i}\mathbf{z}^\mathbf{j}\right)
\end{align*}
and if $ \mathbf{i} = \mathbf{j} $, then
\[
    g_{\mathbf{i}\mathbf{i}} = \mathbf{z}^\mathbf{i}\tau(\mathbf{z})^\mathbf{i}.
\]

\begin{lemma}
\label{lemmaBasis2}
Let $ K = \Q(\sqrt{D}) $ where $ D \in \Z\setminus\{0,1\} $ is squarefree, $ n \in \Z_{\geq 1} $, and $ d \in \Z_{\geq 0} $. The elements $ g_{\mathbf{i}\mathbf{j}} $ for $ \mathbf{i}, \mathbf{j} \in \Z_{\geq 0}^n $, $ |\mathbf{i}|+|\mathbf{j}|\leq d $ form a basis of the vector space $ \Gamma_K^{n,d} $ over $ \Q $.
\end{lemma}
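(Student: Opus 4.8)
The plan is to pass to the ambient $K$-vector space $W$ of all generalized polynomials in $n$ variables of degree $\le d$, in which the monomials $\mathbf{z}^{\mathbf{i}}\tau(\mathbf{z})^{\mathbf{j}}$ with $|\mathbf{i}|+|\mathbf{j}|\le d$ form a $K$-basis: they are linearly independent as elements of the polynomial ring $K[z_1,\tau(z_1),\dots,z_n,\tau(z_n)]$, equivalently (via \Cref{lemmaGenPolZero}) as functions on $K^n$. By \Cref{propGenPol}, $\Gamma_K^{n,d}$ is exactly the $\Q$-subspace of $W$ consisting of those $g=\sum_{|\mathbf{i}|+|\mathbf{j}|\le d}\alpha_{\mathbf{i}\mathbf{j}}\mathbf{z}^{\mathbf{i}}\tau(\mathbf{z})^{\mathbf{j}}$ whose coefficients satisfy $\alpha_{\mathbf{j}\mathbf{i}}=\tau(\alpha_{\mathbf{i}\mathbf{j}})$. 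Each $g_{\mathbf{i}\mathbf{j}}$ has a coefficient array satisfying this relation (for $\mathbf{i}<\mathbf{j}$ the coefficients of $\mathbf{z}^\mathbf{i}\tau(\mathbf{z})^\mathbf{j}$ and $\mathbf{z}^\mathbf{j}\tau(\mathbf{z})^\mathbf{i}$ are $1,1$ in $g_{\mathbf{i}\mathbf{j}}$ and $\sqrt{D},-\sqrt{D}$ in $g_{\mathbf{j}\mathbf{i}}$, and on the diagonal the coefficient is $1$), so each $g_{\mathbf{i}\mathbf{j}}$ lies in $\Gamma_K^{n,d}$. It remains to show that these elements span $\Gamma_K^{n,d}$ over $\Q$ and are $\Q$-linearly independent.

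For spanning, I would take $g\in\Gamma_K^{n,d}$ as above and group its monomials into the unordered pairs $\{(\mathbf{i},\mathbf{j}),(\mathbf{j},\mathbf{i})\}$ with $\mathbf{i}<\mathbf{j}$, together with the diagonal indices $(\mathbf{i},\mathbf{i})$; since the lexicographic order is total, this partitions all index pairs exactly once. For a diagonal index, $\alpha_{\mathbf{i}\mathbf{i}}=\tau(\alpha_{\mathbf{i}\mathbf{i}})$ forces $\alpha_{\mathbf{i}\mathbf{i}}\in\Q$, so that block is $\alpha_{\mathbf{i}\mathbf{i}}\,g_{\mathbf{i}\mathbf{i}}$. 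For $\mathbf{i}<\mathbf{j}$, write $\alpha_{\mathbf{i}\mathbf{j}}=a_{\mathbf{i}\mathbf{j}}+b_{\mathbf{i}\mathbf{j}}\sqrt{D}$ with $a_{\mathbf{i}\mathbf{j}},b_{\mathbf{i}\mathbf{j}}\in\Q$; then $\alpha_{\mathbf{j}\mathbf{i}}=a_{\mathbf{i}\mathbf{j}}-b_{\mathbf{i}\mathbf{j}}\sqrt{D}$, and, using $\tau(\mathbf{z})^{\mathbf{i}}\mathbf{z}^{\mathbf{j}}=\mathbf{z}^{\mathbf{j}}\tau(\mathbf{z})^{\mathbf{i}}$, the paired contribution $\alpha_{\mathbf{i}\mathbf{j}}\mathbf{z}^{\mathbf{i}}\tau(\mathbf{z})^{\mathbf{j}}+\alpha_{\mathbf{j}\mathbf{i}}\mathbf{z}^{\mathbf{j}}\tau(\mathbf{z})^{\mathbf{i}}$ collapses to $a_{\mathbf{i}\mathbf{j}}\,g_{\mathbf{i}\mathbf{j}}+b_{\mathbf{i}\mathbf{j}}\,g_{\mathbf{j}\mathbf{i}}$. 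Summing over all blocks exhibits $g$ as a $\Q$-linear combination of the $g_{\mathbf{i}\mathbf{j}}$.

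For linear independence, suppose $\sum c_{\mathbf{i}\mathbf{j}}\,g_{\mathbf{i}\mathbf{j}}=0$ with all $c_{\mathbf{i}\mathbf{j}}\in\Q$, and expand this relation in the monomial basis of $W$. For $\mathbf{i}<\mathbf{j}$ the only $g_{\mathbf{i}'\mathbf{j}'}$ involving $\mathbf{z}^{\mathbf{i}}\tau(\mathbf{z})^{\mathbf{j}}$ are $g_{\mathbf{i}\mathbf{j}}$ and $g_{\mathbf{j}\mathbf{i}}$, so the coefficient of $\mathbf{z}^{\mathbf{i}}\tau(\mathbf{z})^{\mathbf{j}}$ is $c_{\mathbf{i}\mathbf{j}}+\sqrt{D}\,c_{\mathbf{j}\mathbf{i}}$ and the coefficient of $\mathbf{z}^{\mathbf{j}}\tau(\mathbf{z})^{\mathbf{i}}$ is $c_{\mathbf{i}\mathbf{j}}-\sqrt{D}\,c_{\mathbf{j}\mathbf{i}}$, while the coefficient of $\mathbf{z}^{\mathbf{i}}\tau(\mathbf{z})^{\mathbf{i}}$ is $c_{\mathbf{i}\mathbf{i}}$. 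Linear independence of the monomials forces each of these to vanish, and since $1,\sqrt{D}$ are $\Q$-linearly independent we conclude $c_{\mathbf{i}\mathbf{j}}=0$ for all $\mathbf{i},\mathbf{j}$.

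I do not anticipate a genuine obstacle: the statement is essentially the decomposition $K=\Q\oplus\Q\sqrt{D}$ carried out coefficientwise over conjugate pairs of monomials, on top of the linear independence of monomials supplied by \Cref{lemmaGenPolZero}. The one point that needs care is the index bookkeeping dictated by the lexicographic convention --- for $\mathbf{i}<\mathbf{j}$ the polynomial labelled $g_{\mathbf{j}\mathbf{i}}$ is the $\sqrt{D}$-twisted difference, not a monomial-type term --- so one must keep the roles of $(\mathbf{i},\mathbf{j})$ and $(\mathbf{j},\mathbf{i})$ straight throughout both halves of the argument.
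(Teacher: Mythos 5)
Your proof is correct and follows essentially the same route as the paper: both use \Cref{propGenPol} to reduce membership in $\Gamma_K^{n,d}$ to the coefficient condition $\alpha_{\mathbf{j}\mathbf{i}}=\tau(\alpha_{\mathbf{i}\mathbf{j}})$, then split $K=\Q\oplus\Q\sqrt{D}$ coefficientwise over conjugate pairs of monomials for spanning, and read off linear independence from the fact that each monomial $\mathbf{z}^{\mathbf{i}}\tau(\mathbf{z})^{\mathbf{j}}$ appears in at most two of the $g_{\mathbf{i}\mathbf{j}}$. Your explicit extraction of the coefficients $c_{\mathbf{i}\mathbf{j}}\pm\sqrt{D}\,c_{\mathbf{j}\mathbf{i}}$ in the independence step is, if anything, slightly more careful than the paper's phrasing of the same argument.
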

\begin{proof}
Let
\[
    g(\mathbf{z}) = \sum_{|\mathbf{i}|+|\mathbf{j}|\leq d} \alpha_{\mathbf{i}\mathbf{j}}\mathbf{z}^\mathbf{i}\tau(\mathbf{z})^\mathbf{j} \in \Gamma_K^{n,d}.
\]
By~\Cref{propGenPol}, $ \alpha_{\mathbf{j}\mathbf{i}} = \tau(\alpha_{\mathbf{i}\mathbf{j}}) $ for every $ \mathbf{i}, \mathbf{j} \in \Z_{\geq 0}^n $, hence
\[
    g(\mathbf{z}) = \sum_{\substack{\mathbf{i}<\mathbf{j}\\|\mathbf{i}|+|\mathbf{j}|\leq d}}\alpha_{\mathbf{i}\mathbf{j}}\mathbf{z}^\mathbf{i}\tau(\mathbf{z})^\mathbf{j}+\tau(\alpha_{\mathbf{i}\mathbf{j}})\tau(\mathbf{z})^\mathbf{i}\mathbf{z}^\mathbf{j}+\sum_{2|\mathbf{i}|\leq d}\alpha_{\mathbf{i}\mathbf{i}}\mathbf{z}^\mathbf{i}\tau(\mathbf{z})^\mathbf{i}.
\]
Let $ \alpha_{\mathbf{i}\mathbf{j}} = a_{\mathbf{i}\mathbf{j}}+b_{\mathbf{i}\mathbf{j}}\sqrt{D} $ where $ a_{\mathbf{i}\mathbf{j}}, b_{\mathbf{i}\mathbf{j}} \in \Q $. In particular, $ \alpha_{\mathbf{i}\mathbf{i}} = \tau(\alpha_{\mathbf{i}\mathbf{i}}) $ implies $ \alpha_{\mathbf{i}\mathbf{i}} = a_{\mathbf{i}\mathbf{i}} $. We get
\begin{align*}
    g(\mathbf{z})& = \sum_{\substack{\mathbf{i}<\mathbf{j}\\|\mathbf{i}|+|\mathbf{j}|\leq d}}a_{\mathbf{i}\mathbf{j}}\left(\mathbf{z}^\mathbf{i}\tau(\mathbf{z})^\mathbf{j}+\tau(\mathbf{z})^\mathbf{i}\mathbf{z}^\mathbf{j}\right)+b_{\mathbf{i}\mathbf{j}}\sqrt{D}\left(\mathbf{z}^\mathbf{i}\tau(\mathbf{z})^\mathbf{j}-\tau(\mathbf{z})^\mathbf{i}\mathbf{z}^\mathbf{j}\right)+\sum_{2|\mathbf{i}|\leq d}a_{\mathbf{i}\mathbf{i}}\mathbf{z}^\mathbf{i}\tau(\mathbf{z})^\mathbf{i},
\end{align*}
hence $ g $ is a $\Q$-linear combination of the elements $ g_{\mathbf{i}\mathbf{j}} $.

Let
\[
    B = \{g_{\mathbf{i}\mathbf{j}}:\; \mathbf{i}, \mathbf{j}\in \Z_{\geq 0}^n, |\mathbf{i}|+|\mathbf{j}|\leq d\}.
\]
If $ \mathbf{i} \neq \mathbf{j} $, then the only generalized polynomials in $ B $ containing the term $ \mathbf{z}^\mathbf{i}\tau(\mathbf{z})^\mathbf{j} $ are $ g_{\mathbf{i}\mathbf{j}} $ and $ g_{\mathbf{j}\mathbf{i}} $. Since they are linearly independent, neither can be expressed as a linear combination of the remaining elements in $ B $. Similarly, if $ \mathbf{i} = \mathbf{j} $, then $ \mathbf{z}^\mathbf{i}\tau(\mathbf{z})^\mathbf{i} $ is contained only in $ g_{\mathbf{i}\mathbf{i}} $, and thus $ g_{\mathbf{i}\mathbf{i}} $ cannot be expressed as a linear combination of the other elements in $ B $. This proves that the $ g_{\mathbf{i}\mathbf{j}} $ are linearly independent.
\end{proof}

\begin{proof}[Proof of \Cref{thmBasis}]
Let
\[
    S_n = \Q\left[z_1+\tau(z_1), \sqrt{D}(z_1-\tau(z_1)), \dots, z_n+\tau(z_n),\sqrt{D}(z_n-\tau(z_n))\right].
\]
We have $ S_n \subset \Gamma_K^n $ because the generators of $ S_n $ are $ \Q $-valued, and we want to prove $ \Gamma_K^n \subset S_n $.

We proceed by induction on $ n $. For $ n = 1 $, this is \Cref{lemmaBasis1}. Suppose that $ n \geq 2 $ and the statement is true for polynomials with $ < n $ variables.

Let $ g \in \Gamma_K^n $. We show $ g \in S_n $ by induction on $ d = \deg(g) $. If $ d = 0 $, then $ g(\mathbf{z}) = a \in \Q $ is a constant polynomial. Thus we assume that $ d \geq 1 $ and that the $ \Q $-valued generalized polynomials of degree $ < d $ in $ n $ variables are contained in $ S_n $. The polynomial $ g $ is a $\Q$-linear combination of the elements in the basis from \Cref{lemmaBasis2}. Thus it is enough to show that the basis elements of degree $ d $ belong to $ S_n $.

\

We show that if $ \mathbf{i}, \mathbf{j} \in \Z_{\geq 0}^n $, $ |\mathbf{i}|+|\mathbf{j}| = d $, and $ \mathbf{i}<\mathbf{j} $, then
\begin{align*}
    g_{\mathbf{i}\mathbf{j}}(\mathbf{z})& = \mathbf{z}^\mathbf{i}\tau(\mathbf{z})^\mathbf{j}+\tau(\mathbf{z})^\mathbf{i}\mathbf{z}^\mathbf{j} \in S_n,\\
    g_{\mathbf{j}\mathbf{i}}(\mathbf{z})& = \sqrt{D}\left(\mathbf{z}^\mathbf{i}\tau(\mathbf{z})^\mathbf{j}-\tau(\mathbf{z})^\mathbf{i}\mathbf{z}^\mathbf{j}\right) \in S_n.
\end{align*}

If $ i_n =0 $ and $ j_n = 0 $, then the variable $ z_n $ does not appear in $ g_{\mathbf{i}\mathbf{j}} $ and $ g_{\mathbf{j}\mathbf{i}} $, hence $ g_{\mathbf{i}\mathbf{j}}, g_{\mathbf{j}\mathbf{i}} \in S_n $ by the inductive hypothesis.

If $ i_n \geq 1 $, then we let
\begin{align*}
    h(\mathbf{z})& = z_1^{i_1}\tau(z_1)^{j_1}z_2^{i_2}\tau(z_2)^{j_2}\cdots z_n^{i_n-1}\tau(z_n)^{j_n},\\
    g^1_{\mathbf{i}\mathbf{j}}(\mathbf{z})& = (z_n+\tau(z_n))\cdot (h(\mathbf{z})+\tau(h(\mathbf{z}))),\\
    g^2_{\mathbf{i}\mathbf{j}}(\mathbf{z})& = (z_n-\tau(z_n))\cdot (h(\mathbf{z})-\tau(h(\mathbf{z}))),\\
    g^1_{\mathbf{j}\mathbf{i}}(\mathbf{z})& = \sqrt{D}(z_n-\tau(z_n))\cdot (h(\mathbf{z})+\tau(h(\mathbf{z}))),\\
    g^2_{\mathbf{j}\mathbf{i}}(\mathbf{z})& = (z_n+\tau(z_n))\cdot \sqrt{D}(h(\mathbf{z})-\tau(h(\mathbf{z}))).
\end{align*}
so that
\begin{align*}
    g_{\mathbf{i}\mathbf{j}}(\mathbf{z})& = z_n h(\mathbf{z})+\tau(z_n)\tau(h(\mathbf{z})) = \frac{1}{2}\left(g^1_{\mathbf{i}\mathbf{j}}+g^2_{\mathbf{i}\mathbf{j}}\right),\\
    g_{\mathbf{j}\mathbf{i}}(\mathbf{z})& = \sqrt{D}\left(z_nh(\mathbf{z})-\tau(z_n)\tau(h(\mathbf{z}))\right) = \frac{1}{2}\left(g^1_{\mathbf{j}\mathbf{i}}+g^2_{\mathbf{j}\mathbf{i}}\right).
\end{align*}
Because $ h(\mathbf{z}) $ has degree $ d-1 $, we can apply induction on $ h(\mathbf{z})+\tau(h(\mathbf{z})) $ and $ \sqrt{D}(h(\mathbf{z})-\tau(h(\mathbf{z}))) $. Since
\[
    z_n+\tau(z_n),\ \sqrt{D}(z_n-\tau(z_n)),\ h(\mathbf{z})+\tau(h(\mathbf{z})),\ \sqrt{D}(h(\mathbf{z})-\tau(h(\mathbf{z}))) \in S_n,
\]
we immediately obtain $ g^1_{\mathbf{i}\mathbf{j}}, g^1_{\mathbf{j}\mathbf{i}}, g^2_{\mathbf{j}\mathbf{i}} \in S_n $. We also get
\[
    g^2_{\mathbf{i}\mathbf{j}} = \frac{1}{D}\cdot\sqrt{D}(z_n-\tau(z_n))\cdot\sqrt{D}(h(\mathbf{z})-\tau(h(\mathbf{z}))) \in S_n,
\]
and $ g_{\mathbf{i}\mathbf{j}}, g_{\mathbf{j}\mathbf{i}} \in S_n $ follows.

If $ i_n = 0 $ and $ j_n \geq 1 $, then the proof is analogous to the case $ i_n \geq 1 $.

\

Next, we show that if $ \mathbf{i} \in \Z_{\geq 0}^n $ and $ 2|\mathbf{i}| = d $, then $ g_{\mathbf{i}\mathbf{i}}(\mathbf{z}) = \mathbf{z}^\mathbf{i}\tau(\mathbf{z})^\mathbf{i} \in S_n $.

If $ i_n = 0 $, then $ \mathbf{g}_{ii} $ is a polynomial in $ n-1 $ variables, hence $ \mathbf{g}_{ii} \in S_n $ by the inductive hypothesis.

If $ i_n \geq 1 $, then we let
\[
    h(\mathbf{z}) = z_1^{i_1}\tau(z_1)^{i_1}z_2^{i_2}\tau(z_2)^{i_2}\cdots z_n^{i_n-1}\tau(z_n)^{i_n-1}.
\]
We have $ g_{\mathbf{i}\mathbf{i}}(\mathbf{z}) = z_n\tau(z_n)\cdot h(\mathbf{z}) $. Since the theorem holds for generalized polynomials in one variable, we get
\[
    z_n\tau(z_n) \in \Q\left[z_n+\tau(z_n), \sqrt{D}\left(z_n-\tau(z_n)\right)\right] \subset S_n.
\]
The generalized polynomial $ h $ has degree $ d-2 $, hence $ h \in S_n $ by the inductive hypothesis, and $ g_{\mathbf{i}\mathbf{i}} \in S_n $ follows.
\end{proof}

\section{Generalized quadratic forms}
\label{secGenForms}

Generalized quadratic forms are defined in \cite[Definition~1.1]{BPS} over a Galois extension $ K/\Q $ of degree $ d $ (which is assumed to be totally real). We need the definition only over quadratic fields.

\begin{definition}
\label{defGenQF}
Let $ K = \Q(\sqrt{D}) $ where $ D \in \Z\setminus\{0,1\} $ is squarefree. A \emph{generalized quadratic form} in $ n $ variables over $ K $ is a generalized polynomial
\begin{equation}
    \label{eqGenQF}
    G(z_1, z_2, \dots, z_n) = \sum_{1\leq i \leq j \leq n} \alpha_{ij}z_iz_j+\sum_{1\leq i, j \leq n} \beta_{ij}z_i\tau(z_j)+\sum_{1 \leq i \leq j \leq n}\gamma_{ij}\tau(z_i)\tau(z_j)
\end{equation}
where $ \alpha_{ij}, \gamma_{ij} \in K $ for $ 1 \leq i \leq j \leq n $ and $ \beta_{ij} \in K $ for $ 1 \leq i, j \leq n $.
\end{definition}

A generalized quadratic form $ G $ given by~\eqref{eqGenQF} will be called \emph{integral} if $ \alpha_{ij}, \gamma_{ij} \in \O_K $ for $ 1 \leq i \leq j \leq n $ and $ \beta_{ij} \in \O_K $ for $ 1 \leq i, j \leq n $. An integral generalized quadratic form will be called \emph{classical} if $ \alpha_{ij}, \gamma_{ij} \in 2\O_K $ for $ i < j $ and $ \beta_{ij} \in 2\O_K $ for $ 1 \leq i, j \leq n $. 
\medskip

Let us recall that a quadratic form $ Q $ in $ n $ variables over $ K $ is a homogeneous polynomial of degree $ 2 $ over $K$. We can also obtain it as a generalized quadratic form with $\beta_{ij},\gamma_{ij}=0$ for all $1\leq i,j \leq n.$

Then if $ G $ is a generalized quadratic form in $ n $ variables over $ K $, then we can define a \emph{quadratic form $ Q $ associated to $ G $} in $ 2n $ variables by
\begin{equation}
\label{eqDefQ}
    Q(x_1, y_1, \dots, x_n, y_n) = G(x_1+y_1\omega_D, \dots, x_n+y_n\omega_D).
\end{equation}
In other words, $ Q $ is obtained from $ G $ by expressing the variables $ z_1, z_2, \dots, z_n $ in the integral basis $ (1, \omega_D) $. It has coefficients in $ K $ and the variables $ x_1, y_1, \dots, x_n, y_n $ take values in $ \Q $.

If $ G $ is a generalized form in $ n $ variables over $ K $ given by \eqref{eqGenQF}, then we define the \emph{matrix of $ G $} as the $ 2n \times 2n $ matrix
\[
    M_G = \begin{pmatrix}
        A&B\\
        B^\top&C
    \end{pmatrix}
\]
where
\[
    A = \begin{pmatrix}
        \alpha_{11}&\frac{\alpha_{12}}{2}&\dots&\frac{\alpha_{1n}}{2}\\
        \frac{\alpha_{12}}{2}&\alpha_{22}&\dots&\frac{\alpha_{2n}}{2}\\
        \vdots&\vdots&\ddots&\vdots\\
        \frac{\alpha_{1n}}{2}&\frac{\alpha_{2n}}{2}&\dots&\alpha_{nn}
    \end{pmatrix},\ 
    B = \begin{pmatrix}
        \frac{\beta_{11}}{2}&\frac{\beta_{12}}{2}&\dots&\frac{\beta_{1n}}{2}\\
        \frac{\beta_{21}}{2}&\frac{\beta_{22}}{2}&\dots&\frac{\beta_{2n}}{2}\\
        \vdots&\vdots&\ddots&\vdots\\
        \frac{\beta_{n1}}{2}&\frac{\beta_{n2}}{2}&\dots&\frac{\beta_{nn}}{2}
    \end{pmatrix},\ 
    C = \begin{pmatrix}
        \gamma_{11}&\frac{\gamma_{12}}{2}&\dots&\frac{\gamma_{1n}}{2}\\
        \frac{\gamma_{12}}{2}&\gamma_{22}&\dots&\frac{\gamma_{2n}}{2}\\
        \vdots&\vdots&\ddots&\vdots\\
        \frac{\gamma_{1n}}{2}&\frac{\gamma_{2n}}{2}&\dots&\gamma_{nn}
    \end{pmatrix}.
\]
Thus $ M_G $ is the matrix such that
\[
    G(\mathbf{z}) = (\mathbf{z},\tau(\mathbf{z}))M_G\begin{pmatrix}\mathbf{z}\\\tau(\mathbf{z})\end{pmatrix}
\]
for $ \mathbf{z} \in K^n $. The \emph{rank} of $ G $ is the rank of the matrix $ M_G$ over $ K $.

The $ 2n \times 2n $ matrix
\[
    T = \begin{pmatrix}
        1&\omega_D&0&0&\dots&0&0\\
        0&0&1&\omega_D&\dots&0&0\\
        \vdots&\vdots&\vdots&\ddots&\vdots&\vdots&\vdots\\
        0&0&\dots&0&0&1&\omega_D\\
        1&\tau(\omega_D)&0&0&\dots&0&0\\
        0&0&1&\tau(\omega_D)&\dots&0&0\\
        \vdots&\vdots&\vdots&\ddots&\vdots&\vdots&\vdots\\
        0&0&\dots&0&0&1&\tau(\omega_D)\\
    \end{pmatrix}
\]
determines the change of variables $ (x_1, y_1, \dots, x_n, y_n) \mapsto (z_1, \dots, z_n, \tau(z_1), \dots, \tau(z_n)) $. Thus, if is the quadratic form $ Q $ is associated to the quadratic form $G$ defined by~\eqref{eqDefQ}, and $M_Q, M_G$ are their respective matrices, then
\[
    M_Q = T^\top M_G T.
\]

If $ G $ is a generalized quadratic form over $ \Q $ then we say that $ G $ \emph{represents} $ b \in \Z $ over $ \Z $ if there exists $ \mathbf{a} \in \Z^n $ such that $ G(\mathbf{a}) = b $. In particular, according to our definition, every quadratic form represents $ 0 $. If there exists $ \mathbf{a} \in \Z^n\setminus\{(0,0,\dots,0)\} $ such that $ G(\mathbf{a}) = 0 $, then we say that $ G $ represents $ 0 $ \emph{non-trivially}. The form $ G $ is called \emph{positive definite} if $ M_G $ is a positive definite matrix and $ G $ is called \emph{universal} if it is a positive definite integral form such that it represents every $ b \in \Z_{\geq 1} $.

\begin{lemma}
\label{lemmaDetMQ}
Let $ K = \Q(\sqrt{D}) $ where $ D \in \Z\setminus\{0,1\} $ is squarefree, $ G $ be a generalized form in $ n $ variables over $ K $ given by \eqref{eqGenQF}, and $ M_G $ be the matrix of $ G $. If $ Q $ is the quadratic form in $ 2n $ variables associated to $ G $ and $ M_Q $ is the matrix of $ Q $, then
\[
    \det(M_Q) = D^n\cdot\begin{cases}
        \det(2M_G),&\text{if }D\equiv 2,3 \pmod{4},\\
        \det(M_G),&\text{if }D\equiv 1 \pmod{4}.
    \end{cases}
\]
\end{lemma}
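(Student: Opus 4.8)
The plan is to exploit the relation $M_Q = T^\top M_G T$ recorded immediately above the statement, which gives at once
$\det(M_Q) = \det(T)^2\,\det(M_G)$. Everything then reduces to computing $\det(T)^2$, which is a purely combinatorial determinant calculation.

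First I would simplify $T$ by permuting its rows. If one reorders the rows so that the row corresponding to $\tau(z_k)$ is placed immediately after the row corresponding to $z_k$ (for each $k=1,\dots,n$), then $T$ becomes block diagonal with $n$ diagonal blocks, each equal to $S = \begin{pmatrix} 1 & \omega_D \\ 1 & \tau(\omega_D) \end{pmatrix}$; the columns, which are already grouped as $x_1,y_1,x_2,y_2,\dots$, need not be touched. A permutation of rows alters the determinant only by a sign, so $\det(T) = \pm\,\det(S)^n$ and hence $\det(T)^2 = \det(S)^{2n}$, the sign disappearing upon squaring.

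Next I would evaluate $\det(S) = \tau(\omega_D) - \omega_D$ in the two cases. If $D\equiv 2,3\pmod 4$ then $\omega_D = \sqrt{D}$, so $\det(S) = -2\sqrt{D}$ and $\det(S)^2 = 4D$; if $D\equiv 1\pmod 4$ then $\omega_D = \tfrac{1+\sqrt{D}}{2}$, so $\det(S) = -\sqrt{D}$ and $\det(S)^2 = D$. Substituting back, $\det(M_Q) = \det(S)^{2n}\det(M_G)$ equals $(4D)^n\det(M_G)$ in the first case and $D^n\det(M_G)$ in the second. In the case $D\equiv 2,3\pmod 4$ one finally rewrites $(4D)^n\det(M_G) = D^n\cdot 2^{2n}\det(M_G) = D^n\det(2M_G)$, using that $M_G$ is a $2n\times 2n$ matrix so that scaling it by $2$ multiplies its determinant by $2^{2n}$.

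There is no genuine obstacle here; the computation is routine once one sees the block structure of $T$. The only point that requires a bit of care is bookkeeping with the size of $M_G$: it is $2n\times 2n$, not $n\times n$, so the factor coming from $\det(S)^{2n}=(4D)^n$ must be matched against $\det(2M_G)=2^{2n}\det(M_G)=4^n\det(M_G)$ in order to reproduce the stated formula exactly, and one should also note that the permutation sign in $\det(T)=\pm\det(S)^n$ is irrelevant precisely because $\det(M_Q)$ depends on $\det(T)^2$.
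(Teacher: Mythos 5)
Your proposal is correct and follows essentially the same route as the paper: both use $\det(M_Q)=\det(T)^2\det(M_G)$, reduce $T$ to a block-diagonal matrix with $n$ copies of $\begin{pmatrix}1&\omega_D\\1&\tau(\omega_D)\end{pmatrix}$ by a row permutation, and evaluate $\det(S)^{2}=4D$ or $D$ according to the residue of $D$ modulo $4$, finally absorbing $2^{2n}$ into $\det(2M_G)$. Your observation that the permutation sign need not be computed because it disappears upon squaring is a minor (and legitimate) shortcut relative to the paper, which records the sign of the out-shuffle explicitly.
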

\begin{proof}
Since $ M_G = T^\top M_Q T $, we get
\[
    \det(M_Q) = (\det T)^2\det(M_G).
\]

Let us compute the determinant of $ T $. If $ \pi $ is the permutation
\[
    \pi = \begin{pmatrix}
        1&2&3&4&\dots&2n-1&2n\\
        1&n+1&2&n+2&\dots&n&2n
    \end{pmatrix}
\]
(the out-shuffle), then $ \sgn(\pi) = (-1)^\frac{n(n-1)}{2} $. Applying this permutation to the rows of $ T $, we obtain a block-diagonal matrix where each block equals
\[
    M = \begin{pmatrix}
        1&\omega_D\\
        1&\tau(\omega_D)
    \end{pmatrix}.
\]
If $ D \equiv 2, 3 \pmod{4} $, then
\[
    \det M = \det\begin{pmatrix}
        1&\sqrt{D}\\
        1&-\sqrt{D}
    \end{pmatrix} = -2\sqrt{D},
\]
while if $ D \equiv 1 \pmod{4} $, then
\[
    \det M = \det\begin{pmatrix}
        1&\frac{1+\sqrt{D}}{2}\\
        1&\frac{1-\sqrt{D}}{2}
    \end{pmatrix} = -\sqrt{D}.
\]
Thus
\[
    \det T = (-1)^\frac{n(n-1)}{2}(\det M)^n = \begin{cases}
        (-1)^\frac{n(n+1)}{2}D^\frac{n}{2}2^n,&\text{if }D \equiv 2,3\pmod{4},\\
        (-1)^\frac{n(n+1)}{2}D^\frac{n}{2},&\text{if }D \equiv 1 \pmod{4}.
    \end{cases}
\]

In the case $ D \equiv 2, 3 \pmod{4} $, we get
\[
    \det(M_Q) = D^n 2^{2n} \det(M_G) = D^n \det(2M_G),
\]
and in the case $ D \equiv 1 \pmod{4} $, we get
\[
    \det(M_Q) = D^n \det(M_G).\qedhere
\]
\end{proof}

\begin{corollary}
\label{corDetMQ}
Let $ K = \Q(\sqrt{D}) $ where $ D \in \Z\setminus\{0,1\} $ is squarefree, $ G $ be a generalized form in $ n $ variables over $ K $ given by \eqref{eqGenQF}, $ Q $ be the quadratic form in $ 2n $ variables associated to $ G $, and $ M_Q $ be the matrix of $ Q $. If one of the following conditions is satisfied:
\begin{enumerate}[i)]
    \item $ D \equiv 2, 3 \pmod{4} $ and $ G $ is integral,
    \item $ D \equiv 1 \pmod{4} $ and $ G $ is classical,
\end{enumerate}
then $ D^n \mid \det(M_Q) $.
\end{corollary}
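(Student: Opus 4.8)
The plan is to deduce this immediately from \Cref{lemmaDetMQ}. That lemma gives $\det(M_Q) = D^n\det(2M_G)$ when $D\equiv 2,3\pmod 4$ and $\det(M_Q)=D^n\det(M_G)$ when $D\equiv 1\pmod 4$, so the statement reduces to checking that the relevant matrix --- namely $2M_G$ in case (i) and $M_G$ in case (ii) --- has determinant lying in $\O_K$. The divisibility $D^n\mid\det(M_Q)$ then holds in $\O_K$, and it upgrades to a divisibility in $\Z$ in the situations where the corollary is actually applied, since there $\det(M_Q)\in\Q$ (every generalized form we later feed into this is $\Q$-valued, cf.\ \Cref{propGenPol}) and $\Q\cap\O_K=\Z$.

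For case (i), I would just read off the entries of $2M_G$ from the block description $M_G=\left(\begin{smallmatrix}A&B\\B^\top&C\end{smallmatrix}\right)$: the diagonal entries of $2M_G$ are $2\alpha_{ii}$ and $2\gamma_{ii}$, the remaining entries of the $A$- and $C$-blocks are $\alpha_{ij}$ and $\gamma_{ij}$ for $i<j$, and the entries of the $B$-block become $\beta_{ij}$. Integrality of $G$ says precisely that all of $\alpha_{ij},\gamma_{ij},\beta_{ij}$ lie in $\O_K$, so every entry of $2M_G$ lies in $\O_K$ and hence $\det(2M_G)\in\O_K$, giving $D^n\mid\det(M_Q)$.

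Case (ii) is where one must strengthen the hypothesis to \emph{classical}, because the determinant formula for $D\equiv 1\pmod 4$ carries no compensating factor $2^{2n}$: here I need $M_G$ itself, not $2M_G$, to have entries in $\O_K$. The diagonal entries $\alpha_{ii},\gamma_{ii}$ lie in $\O_K$ by integrality; the off-diagonal entries $\alpha_{ij}/2,\gamma_{ij}/2$ ($i<j$) lie in $\O_K$ because classicality forces $\alpha_{ij},\gamma_{ij}\in 2\O_K$; and the entries $\beta_{ij}/2$ of the $B$-block lie in $\O_K$ because $\beta_{ij}\in 2\O_K$. Hence $M_G$ has $\O_K$-entries, so $\det(M_G)\in\O_K$, and $D^n\mid\det(M_Q)$ follows. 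There is no real obstacle: the only thing to be careful about is the bookkeeping of which matrix entries carry the factor $\tfrac12$ and matching that against the exact definitions of ``integral'' and ``classical'', together with the elementary remark that $\Q\cap\O_K=\Z$ converts the $\O_K$-divisibility into a $\Z$-divisibility whenever $\det(M_Q)$ happens to be rational.
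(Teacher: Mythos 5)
Your proposal is correct and follows the paper's own argument exactly: apply \Cref{lemmaDetMQ} and observe that integrality (resp.\ classicality) puts the entries of $2M_G$ (resp.\ $M_G$) in $\O_K$, so the relevant determinant is an algebraic integer. The extra remark about $\Q\cap\O_K=\Z$ is a harmless elaboration the paper leaves implicit.
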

\begin{proof}
First, assume that $ D \equiv 2, 3 \pmod{4} $ and $ G $ is integral. Since the elements of $ M_G $ are in $ \frac{1}{2}\O_K $, we must have $ \det(2M_G) \in \O_K $. Thus $ D^n \mid \det(M_Q) $ by \Cref{lemmaDetMQ}.

Secondly, assume that $ D \equiv 1 \pmod{4} $ and $ G $ is classical. The elements of $ M_G $ are in $ \O_K $, hence $ \det(M_G) \in \O_K $, and $ D^n \mid \det(M_Q) $ by \Cref{lemmaDetMQ}.
\end{proof}

\begin{lemma}
\label{lemmaMQ}
Let $ K = \Q(\sqrt{D}) $ where $ D \in \Z\setminus\{0,1\} $ is squarefree and let $ G $ be a generalized form in $ n $ variables over $ K $ given by \eqref{eqGenQF}. Assume that $ G(\mathbf{a}) \in \Q $ for every $ \mathbf{a} \in K^n $. Let $ Q $ be the quadratic form in $ 2n $ variables associated to $ G $. If $ 1 \leq i \leq j \leq n $, then we let $ a_{ij} $, $ b_{ij} $, $ b_{ji} $, and $ c_{ij} $ be the coefficients of the terms $ x_ix_j $, $ x_iy_j $, $ x_jy_i $, and $ y_iy_j $ in $ Q $, respectively, so that the matrix $ M_Q $ of the quadratic form $ Q $ is
\[
    M_Q = \begin{pmatrix}
        \begin{matrix}
            a_{11}&\frac{b_{11}}{2}\\
            \frac{b_{11}}{2}&c_{11}
        \end{matrix}&
        \begin{matrix}
            \frac{a_{12}}{2}&\frac{b_{12}}{2}\\
            \frac{b_{21}}{2}&\frac{c_{12}}{2}
        \end{matrix}&\dots&
        \begin{matrix}
            \frac{a_{1n}}{2}&\frac{b_{1n}}{2}\\
            \frac{b_{n1}}{2}&\frac{c_{1n}}{2}
        \end{matrix}\\
        \begin{matrix}
            \frac{a_{12}}{2}&\frac{b_{21}}{2}\\
            \frac{b_{12}}{2}&\frac{c_{12}}{2}
        \end{matrix}&
        \begin{matrix}
            a_{22}&\frac{b_{22}}{2}\\
            \frac{b_{22}}{2}&c_{22}
        \end{matrix}&\dots&
        \begin{matrix}
            \frac{a_{2n}}{2}&\frac{b_{2n}}{2}\\
            \frac{b_{n2}}{2}&\frac{c_{2n}}{2}
        \end{matrix}\\
        \vdots&\vdots&\ddots&\vdots\\
        \begin{matrix}
            \frac{a_{1n}}{2}&\frac{b_{n1}}{2}\\
            \frac{b_{1n}}{2}&\frac{c_{1n}}{2}
        \end{matrix}&
        \begin{matrix}
            \frac{a_{2n}}{2}&\frac{b_{n2}}{2}\\
            \frac{b_{2n}}{2}&\frac{c_{2n}}{2}
        \end{matrix}&\dots&
        \begin{matrix}
            a_{nn}&\frac{b_{nn}}{2}\\
            \frac{b_{nn}}{2}&c_{nn}
        \end{matrix}
    \end{pmatrix}.
\]
If $ 1 \leq i < j \leq n $, then
\begin{align*}
    a_{ij}& = \Tr(\alpha_{ij})+\Tr(\beta_{ij}),\\
    b_{ij}& = \Tr(\alpha_{ij}\omega_D)+\Tr(\beta_{ij}\tau(\omega_D)),\\
    b_{ji}& = \Tr(\alpha_{ij}\omega_D)+\Tr(\beta_{ij}\omega_D),\\
    c_{ij}& = \Tr(\alpha_{ij}\omega_D^2)+\Tr(\beta_{ij})\Nm(\omega_D).
\end{align*}
Moreover, if $ 1 \leq i \leq n $, then
\begin{align*}
    a_{ii}& = \Tr(\alpha_{ii})+\beta_{ii},\\
    b_{ii}& = \Tr(2\alpha_{ii}\omega_D)+\beta_{ii}\Tr(\omega_D),\\
    c_{ii}& = \Tr(\alpha_{ii}\omega_D^2)+\beta_{ii}\Nm(\omega_D).
\end{align*}
\end{lemma}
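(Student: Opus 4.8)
The plan is to prove the lemma by direct substitution: we plug $ z_i = x_i + y_i\omega_D $ (hence $ \tau(z_i) = x_i + y_i\tau(\omega_D) $, since $ x_i, y_i \in \Q $) into the defining expression \eqref{eqGenQF} of $ G $, expand, and collect the coefficient of each monomial $ x_ix_j $, $ x_iy_j $, $ x_jy_i $, $ y_iy_j $. Before starting the computation I would first invoke \Cref{propGenPol}, applied to the multi-indices labelling the monomials of the quadratic form $ G $: the hypothesis $ G(\mathbf{a}) \in \Q $ for all $ \mathbf{a} \in K^n $ forces $ \gamma_{ij} = \tau(\alpha_{ij}) $ for $ 1 \le i \le j \le n $, $ \beta_{ji} = \tau(\beta_{ij}) $ for $ i \ne j $, and $ \beta_{ii} \in \Q $. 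These conjugacy relations are precisely what makes the coefficients of $ Q $ collapse into traces.

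The computation then separates into the off-diagonal case $ 1 \le i < j \le n $ and the diagonal case $ i = j $. For $ i < j $, the terms of $ G $ that contribute a monomial in $ x_i, y_i, x_j, y_j $ are exactly $ \alpha_{ij}z_iz_j $, $ \gamma_{ij}\tau(z_i)\tau(z_j) = \tau(\alpha_{ij})\tau(z_i)\tau(z_j) $, $ \beta_{ij}z_i\tau(z_j) $, and $ \beta_{ji}z_j\tau(z_i) = \tau(\beta_{ij})z_j\tau(z_i) $. Expanding each of these four products in the variables $ x, y $ and adding the contributions, the coefficient of $ x_ix_j $ comes out as $ \alpha_{ij} + \tau(\alpha_{ij}) + \beta_{ij} + \tau(\beta_{ij}) = \Tr(\alpha_{ij}) + \Tr(\beta_{ij}) $, and the coefficients of $ x_iy_j $, $ x_jy_i $, $ y_iy_j $ are obtained the same way, using only the identities $ \lambda + \tau(\lambda) = \Tr(\lambda) $ and $ \omega_D\tau(\omega_D) = \Nm(\omega_D) $. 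For $ i = j $ the contributing terms are $ \alpha_{ii}z_i^2 $, $ \tau(\alpha_{ii})\tau(z_i)^2 $, and $ \beta_{ii}z_i\tau(z_i) $ with $ \beta_{ii} $ rational; expanding $ (x_i + y_i\omega_D)^2 $, $ (x_i + y_i\tau(\omega_D))^2 $, and $ (x_i + y_i\omega_D)(x_i + y_i\tau(\omega_D)) $ gives at once the stated formulas for $ a_{ii} $, $ b_{ii} $, $ c_{ii} $, the factor $ 2 $ in $ b_{ii} = \Tr(2\alpha_{ii}\omega_D) + \beta_{ii}\Tr(\omega_D) $ coming from the square.

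I do not expect any genuine obstacle here: the proof is a bookkeeping exercise. The one place demanding care is the bookkeeping in the off-diagonal case, where one must track which of $ x_iy_j $ and $ x_jy_i $ picks up $ \omega_D $ as opposed to $ \tau(\omega_D) $ from each of the four contributing products. This asymmetry is exactly the reason $ b_{ij} = \Tr(\alpha_{ij}\omega_D) + \Tr(\beta_{ij}\tau(\omega_D)) $ while $ b_{ji} = \Tr(\alpha_{ij}\omega_D) + \Tr(\beta_{ij}\omega_D) $: the term $ \beta_{ij}z_i\tau(z_j) $ contributes $ \beta_{ij}\tau(\omega_D) $ to $ x_iy_j $ but $ \beta_{ij}\omega_D $ to $ x_jy_i $, and dually for $ \beta_{ji}z_j\tau(z_i) $. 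Finally, I would remark that the displayed $ 2\times 2 $ block shape of $ M_Q $ is already forced by the relation $ M_Q = T^\top M_G T $ established before the lemma, so it suffices to verify the entries.
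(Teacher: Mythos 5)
Your proposal is correct and follows essentially the same route as the paper: substitute $z_i = x_i + y_i\omega_D$ into \eqref{eqGenQF}, use \Cref{propGenPol} to get $\gamma_{ij} = \tau(\alpha_{ij})$ and $\beta_{ji} = \tau(\beta_{ij})$ (hence $\beta_{ii} \in \Q$), and collect the coefficients of each monomial into traces, with the same careful tracking of which of $x_iy_j$ and $x_jy_i$ receives $\omega_D$ versus $\tau(\omega_D)$ from the $\beta$-terms. All the stated coefficient computations check out against the paper's.
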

\begin{proof}
The quadratic form $ Q $ is given by
\begin{align*}
    Q(x_1, y_1, \dots, x_n, y_n)& = \sum_{1 \leq i \leq j \leq n}\alpha_{ij}(x_i+y_i\omega)(x_j+y_j\omega)+\sum_{1 \leq i,j \leq n}\beta_{ij}(x_i+y_i\omega)(x_j+y_j\tau(\omega))\\
    &+\sum_{1\leq i \leq j \leq n}\gamma_{ij}(x_i+y_i\tau(\omega))(x_j+y_j\tau(\omega)).
\end{align*}

By~\Cref{propGenPol}, $ G(\mathbf{a}) \in \Q $ for every $ \mathbf{a} \in K^n $ if and only if the coefficients of conjugate terms are conjugate, i.e., $ \gamma_{ij} = \tau(\alpha_{ij}) $ for $ 1 \leq i \leq j \leq n $ and $ \beta_{ji} = \tau(\beta_{ij}) $ for $ 1 \leq i, j \leq n $. If $ i < j $, then
\begin{align*}
    a_{ij}& = \alpha_{ij}+\beta_{ij}+\beta_{ji}+\gamma_{ij} = \Tr(\alpha_{ij})+\Tr(\beta_{ij}),\\
    b_{ij}& = \alpha_{ij}\omega+\beta_{ij}\tau(\omega)+\beta_{ji}\omega+\gamma_{ij}\tau(\omega) = \Tr(\alpha_{ij}\omega)+\Tr(\beta_{ij}\tau(\omega)),\\
    b_{ji}& = \alpha_{ij}\omega+\beta_{ij}\omega+\beta_{ji}\tau(\omega)+\gamma_{ij}\tau(\omega) = \Tr(\alpha_{ij}\omega)+\Tr(\beta_{ij}\omega),\\
    c_{ij}& = \alpha_{ij}\omega^2+(\beta_{ij}+\beta_{ji})\Nm(\omega)+\gamma_{ij}\tau(\omega)^2 = \Tr(\alpha_{ij}\omega^2)+\Tr(\beta_{ij})\Nm(\omega).
\end{align*}
Moreover,
\begin{align*}
    a_{ii}& = \alpha_{ii}+\beta_{ii}+\gamma_{ii} = \Tr(\alpha_{ii})+\beta_{ii},\\
    b_{ii}& = 2\alpha_{ii}\omega+\beta_{ii}\Tr(\omega)+2\gamma_{ii}\tau(\omega) = \Tr(2\alpha_{ii}\omega)+\beta_{ii}\Tr(\omega),\\
    c_{ii}& = \alpha_{ii}\omega^2+\beta_{ii}\Nm(\omega)+\gamma_{ii}\tau(\omega)^2 = \Tr(\alpha_{ii}\omega^2)+\beta_{ii}\Nm(\omega).\qedhere
\end{align*}
\end{proof}

\begin{lemma}
\label{lemmaGQ}
Let $ K = \Q(\sqrt{D}) $ where $ D \in \Z \setminus \{0, 1\} $ is squarefree and let $ G $ be a generalized form in $ n $ variables over $ K $ given by~\eqref{eqGenQF}. Assume that $ G $ is $ \Z $-valued. Let $ Q $ be the quadratic form in $ 2n $ variables associated to $ G $. If $ G $ is integral, then $ Q $ is integral. Moreover, if one of the following conditions holds:
\begin{enumerate}[(i)]
    \item $ D \equiv 2, 3 \pmod{4} $ and $ G $ is integral,
    \item $ D \equiv 1 \pmod{4} $ and $ G $ is classical,
\end{enumerate}
then $ Q $ is classical.
\end{lemma}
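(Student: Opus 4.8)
The plan is to compute the coefficients of $Q$ directly from the formulas in \Cref{lemmaMQ} and check the required integrality and parity conditions, splitting into the two cases $D\equiv 2,3$ and $D\equiv 1\pmod 4$ only at the very end. Since $G$ is $\Z$-valued it is in particular $\Q$-valued, so \Cref{propGenPol} gives $\gamma_{ij}=\tau(\alpha_{ij})$ for $i\le j$ and $\beta_{ji}=\tau(\beta_{ij})$ for all $i,j$; taking $i=j$ in the last relation yields $\beta_{ii}\in\Q$. Hence \Cref{lemmaMQ} applies and writes each coefficient $a_{ij},b_{ij},b_{ji},c_{ij}$ (for $i<j$) and $a_{ii},b_{ii},c_{ii}$ of $Q$ as a $\Z$-linear combination of quantities of the form $\Tr(\alpha_{ij}\omega_D^t)$, $\Tr(\beta_{ij}\omega_D^t)$, $\Tr(\beta_{ij}\tau(\omega_D))$ with $t\in\{0,1,2\}$, together with $\Tr(\omega_D)$, $\Nm(\omega_D)$, and $\beta_{ii}$.

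If $G$ is integral, then $\alpha_{ij},\beta_{ij},\gamma_{ij}\in\O_K$, so all the products $\alpha_{ij}\omega_D^t$, $\beta_{ij}\omega_D^t$, $\beta_{ij}\tau(\omega_D)$ lie in $\O_K$ and their traces in $\Z$; moreover $\Tr(\omega_D),\Nm(\omega_D)\in\Z$ and $\beta_{ii}\in\O_K\cap\Q=\Z$. Substituting into the formulas of \Cref{lemmaMQ} shows every coefficient of $Q$ lies in $\Z$, so $Q$ is integral. This proves the first assertion.

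For the second assertion, note that each of conditions (i) and (ii) forces $G$ to be integral, so by the first part $Q$ is already integral; hence $Q$ is classical exactly when the remaining (off-diagonal) entries of $M_Q$ belong to $\Z$, which, from the displayed shape of $M_Q$, amounts to $a_{ij},b_{ij},b_{ji},c_{ij}\in 2\Z$ for all $i<j$ and $b_{ii}\in 2\Z$ for all $i$. In case (i) we have $\omega_D=\sqrt D$, so $\O_K=\Z[\sqrt D]$, $\Tr(\omega_D)=0$, $\omega_D^2=D$ and $\Nm(\omega_D)=-D$; since the trace of any element of $\Z[\sqrt D]$ is even, each of the coefficients above is a sum of even integers, so $Q$ is classical. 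In case (ii), $G$ is classical, so $\alpha_{ij},\beta_{ij},\gamma_{ij}\in 2\O_K$ for $i<j$ and $\beta_{ij}\in 2\O_K$ for all $i,j$; the trace of an element of $2\O_K$ is even, which takes care of all coefficients with $i<j$, while for the diagonal we use $b_{ii}=\Tr(2\alpha_{ii}\omega_D)+\beta_{ii}\Tr(\omega_D)$: the first summand is the trace of an element of $2\O_K$, hence even, and the second equals $\beta_{ii}$ since $\Tr(\omega_D)=1$ here, and $\beta_{ii}\in 2\O_K\cap\Q=2\Z$.

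The calculations are routine; the only delicate point is the diagonal $2\times 2$ block of $M_Q$, whose off-diagonal entry $b_{ii}/2$ forces us to prove that $b_{ii}$ is even, and in case (ii) this genuinely uses both hypotheses — classicality gives $\beta_{ii}\in 2\O_K$ and $\Z$-valuedness gives $\beta_{ii}\in\Q$, so that $\beta_{ii}\in 2\Z$. One should also keep the two cases separate when invoking $\Tr(\omega_D)$, which equals $0$ in case (i) but $1$ in case (ii).
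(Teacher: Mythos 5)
Your proof is correct and follows essentially the same route as the paper: plug the hypotheses into the trace formulas of \Cref{lemmaMQ} and check integrality and parity of each coefficient, using that traces of elements of $\O_K$ are even when $D\equiv 2,3\pmod 4$ and that $\beta_{ii}\in 2\O_K\cap\Q=2\Z$ in the classical case. Your treatment of the diagonal entry $b_{ii}$ is in fact slightly more explicit than the paper's, which simply asserts $\beta_{ii}\in 2\Z$.
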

\begin{proof}
As before, we let $ a_{ij} $, $ b_{ij} $, $ b_{ji} $, and $ c_{ij} $ be the coefficients of the terms $ x_ix_j $, $ x_iy_j $, $ x_jy_i $, and $ y_iy_j $ in $ Q $, respectively, for $ 1 \leq i \leq j \leq n $. If $ G $ is integral, then $ \alpha_{ij}, \beta_{ij}\in \O_K $ for $ i \leq j $. The elements of $ \O_K $ have trace in $ \Z $, hence $ a_{ij}, b_{ij}, b_{ji}, c_{ij} \in \Z $ by \Cref{lemmaMQ} and $ Q $ is integral.

If $ G $ is classical, then $ \alpha_{ij}, \beta_{ij} \in 2\O_K $ for $ i < j $ and $ \beta_{ii} \in 2\Z $. It follows from \Cref{lemmaMQ} that $ a_{ij}, b_{ij}, b_{ji}, c_{ij} \in 2\Z $ for $ i < j $ and $ b_{ii} \in 2\Z $, hence $ Q $ is classical.

If $ D \equiv 2, 3 \pmod{4} $, then $ \Tr(\alpha) $ is even for every $ \alpha \in \O_K $. Thus it is sufficient to assume that $ G $ is integral to conclude that $ Q $ is classical.
\end{proof}

\begin{lemma}
\label{lemmaMQn}
Let $ K = \Q(\sqrt{D}) $ where $ D \in \Z \setminus\{0,1\} $ is squarefree and let $ G $ be a generalized form in $ n $ variables given by \eqref{eqGenQF}. Assume that $ G(\mathbf{a}) \in \Q $ for every $ \mathbf{a} \in K^n $. If $ 1 \leq i \leq j \leq n $, then we let $ \alpha_{ij} = r_{ij}+s_{ij}\omega_D $ and $ \beta_{ij} = t_{ij}+u_{ij}\omega_D $ where $ r_{ij}, s_{ij}, t_{ij}, u_{ij} \in \Q $. Let $ Q $ be the quadratic form in $ 2n $ variables associated to $ G $. If $ 1 \leq i \leq j \leq n $, then we let $ a_{ij} $, $ b_{ij} $, $ b_{ji} $, and $ c_{ij} $ be the coefficients of the terms $ x_ix_j $, $ x_iy_j $, $ x_jy_i $, and $ y_iy_j $ in $ Q $, respectively.

If $ D \equiv 2, 3 \pmod{4} $ and $ 1 \leq i < j \leq n $, then
\begin{align*}
    a_{ij}& = 2r_{ij}+2t_{ij},\\
    b_{ij}& = 2Ds_{ij}-2Du_{ij},\\
    b_{ji}& = 2Ds_{ij}+2Du_{ij},\\
    c_{ij}& = 2Dr_{ij}-2Dt_{ij}.
\end{align*}
Moreover, if $ 1 \leq i \leq n $, then
\begin{align*}
    a_{ii}& = 2r_{ii}+t_{ii},\\
    b_{ii}& = 4Ds_{ii},\\
    c_{ii}& = 2Dr_{ii}-Dt_{ii}.
\end{align*}

If $ D \equiv 1 \pmod{4} $ and $ 1 \leq i < j \leq n $, then
\begin{align*}
    a_{ij}& = 2r_{ij}+s_{ij}+2t_{ij}+u_{ij},\\
    b_{ij}& = r_{ij}+\frac{1+D}{2}s_{ij}+t_{ij}+\frac{1-D}{2}u_{ij},\\
    b_{ji}& = r_{ij}+\frac{1+D}{2}s_{ij}+t_{ij}+\frac{1+D}{2}u_{ij},\\
    c_{ij}& = \frac{1+D}{2}r_{ij}+\frac{1+3D}{4}s_{ij}+\frac{1-D}{2}t_{ij}+\frac{1-D}{4}u_{ij}.
\end{align*}
Moreover, if $ 1 \leq i \leq n $, then
\begin{align*}
    a_{ii}& = 2r_{ii}+s_{ii}+t_{ii},\\
    b_{ii}& = 2r_{ii}+(1+D)s_{ii}+t_{ii},\\
    c_{ii}& = \frac{1+D}{2}r_{ii}+\frac{1+3D}{4}s_{ii}+\frac{1-D}{4}t_{ii}.
\end{align*}
\end{lemma}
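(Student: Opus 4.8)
The statement is a direct consequence of \Cref{lemmaMQ}, which already expresses $ a_{ij} $, $ b_{ij} $, $ b_{ji} $, $ c_{ij} $ in terms of traces and norms of $ \alpha_{ij} $, $ \beta_{ij} $, and $ \omega_D $. The plan is to substitute $ \alpha_{ij} = r_{ij}+s_{ij}\omega_D $ and $ \beta_{ij} = t_{ij}+u_{ij}\omega_D $ into those identities and simplify using $ \Q $-linearity of the trace. The only arithmetic facts required are the invariants of $ \omega_D $ in the two cases. When $ D \equiv 2, 3 \pmod{4} $, we have $ \omega_D = \sqrt{D} $, hence $ \Tr(\omega_D) = 0 $, $ \Nm(\omega_D) = -D $, $ \tau(\omega_D) = -\sqrt{D} $, and $ \omega_D^2 = D $. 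When $ D \equiv 1 \pmod{4} $, we have $ \omega_D = \frac{1+\sqrt{D}}{2} $, which is a root of $ x^2-x+\frac{1-D}{4} $, so $ \Tr(\omega_D) = 1 $, $ \Nm(\omega_D) = \frac{1-D}{4} $, $ \tau(\omega_D) = 1-\omega_D $, and $ \omega_D^2 = \omega_D+\frac{D-1}{4} $.

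First I would treat the case $ D \equiv 2, 3 \pmod{4} $ with $ i < j $, computing directly $ \Tr(\alpha_{ij}) = 2r_{ij} $, $ \Tr(\beta_{ij}) = 2t_{ij} $, $ \Tr(\alpha_{ij}\omega_D) = \Tr(r_{ij}\sqrt{D}+s_{ij}D) = 2Ds_{ij} $, $ \Tr(\beta_{ij}\omega_D) = 2Du_{ij} $, $ \Tr(\beta_{ij}\tau(\omega_D)) = -2Du_{ij} $, and $ \Tr(\alpha_{ij}\omega_D^2) = \Tr(r_{ij}D+s_{ij}D\sqrt{D}) = 2Dr_{ij} $. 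Plugging these, together with $ \Nm(\omega_D) = -D $, into the four identities of \Cref{lemmaMQ} yields exactly the claimed formulas for $ a_{ij} $, $ b_{ij} $, $ b_{ji} $, $ c_{ij} $. The diagonal entries are obtained the same way from the diagonal formulas of \Cref{lemmaMQ}, once one recalls that $ \Q $-valuedness of $ G $ forces $ \beta_{ii} = \tau(\beta_{ii}) \in \Q $, i.e. $ u_{ii} = 0 $ and $ t_{ii} = \beta_{ii} $.

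The case $ D \equiv 1 \pmod{4} $ proceeds identically in outline, the only additional wrinkle being that each occurrence of $ \omega_D^2 $ must first be rewritten as $ \omega_D+\frac{D-1}{4} $ and each $ \tau(\omega_D) $ as $ 1-\omega_D $ before the trace is taken. Concretely, $ \Tr(\alpha_{ij}\omega_D) = \Tr((r_{ij}+s_{ij})\omega_D-s_{ij}\frac{1-D}{4}) = r_{ij}+\frac{1+D}{2}s_{ij} $, $ \Tr(\beta_{ij}\tau(\omega_D)) = \Tr(\beta_{ij})-\Tr(\beta_{ij}\omega_D) = t_{ij}+\frac{1-D}{2}u_{ij} $, and $ \Tr(\alpha_{ij}\omega_D^2) = \Tr(\alpha_{ij}\omega_D)+\frac{D-1}{4}\Tr(\alpha_{ij}) = \frac{1+D}{2}r_{ij}+\frac{1+3D}{4}s_{ij} $; substituting these into \Cref{lemmaMQ} and collecting terms gives the stated expressions for $ a_{ij} $, $ b_{ij} $, $ b_{ji} $, $ c_{ij} $, and the diagonal entries follow the same pattern.

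I do not anticipate a genuine obstacle here: the argument is a finite, if slightly tedious, calculation, and it can be organized so that each of the eight displayed formulas is verified by a one-line substitution. The one place calling for care is the $ D \equiv 1 \pmod{4} $ case, where one must consistently use the minimal-polynomial relation $ \omega_D^2 = \omega_D+\frac{D-1}{4} $ and the identity $ \tau(\omega_D) = 1-\omega_D $; overlooking either would produce the wrong coefficients.
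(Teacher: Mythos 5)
Your proposal is correct and follows essentially the same route as the paper: apply \Cref{lemmaMQ}, expand $\alpha_{ij}$ and $\beta_{ij}$ in the basis $(1,\omega_D)$, and substitute the values of $\Tr(\omega_D)$, $\Tr(\omega_D^2)$, $\Tr(\omega_D^3)$ (equivalently, your reduction via $\omega_D^2=\omega_D+\tfrac{D-1}{4}$), and $\Nm(\omega_D)$ in the two congruence cases. The spot-checked identities, including the observation that $\Q$-valuedness forces $\beta_{ii}\in\Q$ so $u_{ii}=0$, all agree with the paper's computation.
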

\begin{proof}
We express the coefficients of $ Q $ using \Cref{lemmaMQ}. If $ 1 \leq i < j \leq n $, then
\begin{align*}
    a_{ij}& = \Tr(\alpha_{ij})+\Tr(\beta_{ij}) = \Tr(r_{ij}+s_{ij}\omega_D)+\Tr(t_{ij}+u_{ij}\omega_D)\\
    & = 2r_{ij}+s_{ij}\Tr(\omega_D)+2t_{ij}+u_{ij}\Tr(\omega_D),\\
    b_{ij}& = \Tr(\alpha_{ij}\omega_D)+\Tr(\beta_{ij}\tau(\omega_D)) = \Tr(r_{ij}\omega_D+s_{ij}\omega_D^2)+\Tr(t_{ij}\tau(\omega_D)+u_{ij}\Nm(\omega_D))\\
    & = r_{ij}\Tr(\omega_D)+s_{ij}\Tr(\omega_D^2)+t_{ij}\Tr(\omega_D)+2u_{ij}\Nm(\omega_D),\\
    b_{ji}& = \Tr(\alpha_{ij}\omega_D)+\Tr(\beta_{ij}\omega_D) = \Tr(r_{ij}\omega_D+s_{ij}\omega_D^2)+\Tr(t_{ij}\omega_D+u_{ij}\omega_D^2)\\
    & = r_{ij}\Tr(\omega_D)+s_{ij}\Tr(\omega_D^2)+t_{ij}\Tr(\omega_D)+u_{ij}\Tr(\omega_D^2),\\
    c_{ij}& = \Tr(\alpha_{ij}\omega_D^2)+\Tr(\beta_{ij})\Nm(\omega_D) = \Tr(r_{ij}\omega_D^2+s_{ij}\omega_D^3)+\Tr(t_{ij}+u_{ij}\omega_D)\Nm(\omega_D)\\& = r_{ij}\Tr(\omega_D^2)+s_{ij}\Tr(\omega_D^3)+2t_{ij}\Nm(\omega_D)+u_{ij}\Tr(\omega_D)\Nm(\omega_D).
\end{align*}
Moreover, if $ 1 \leq i \leq n $, then
\begin{align*}
    a_{ii}& = \Tr(\alpha_{ii})+\beta_{ii} = \Tr(r_{ii}+s_{ii}\omega_D)+t_{ii} = 2r_{ii}+s_{ii}\Tr(\omega_D)+t_{ii},\\
    b_{ii}& = \Tr(2\alpha_{ii}\omega_D)+\beta_{ii}\Tr(\omega_D) = \Tr(2r_{ii}\omega_D+2s_{ii}\omega_D^2)+t_{ii}\Tr(\omega_D) = 2r_{ii}\Tr(\omega_D)+2s_{ii}\Tr(\omega_D^2)+t_{ii}\Tr(\omega_D),\\
    c_{ii}& = \Tr(\alpha_{ii}\omega_D^2)+\beta_{ii}\Nm(\omega_D) = \Tr(r_{ii}\omega_D^2+s_{ii}\omega_D^3)+t_{ii}\Nm(\omega_D) = r_{ii}\Tr(\omega_D^2)+s_{ii}\Tr(\omega_D^3)+t_{ii}\Nm(\omega_D).
\end{align*}

If $ D \equiv 2, 3 \pmod{4} $, then $ \omega_D = \sqrt{D} $, hence $ \Tr(\omega_D) = 0 $, $ \Tr(\omega_D^2) = \Tr(D) = 2D $, $ \Tr(\omega_D^3) = \Tr(D\sqrt{D}) = 0 $, and $ \Nm(\omega_D) = -D $.

If $ D \equiv 1 \pmod{4} $, then $ \omega_D = \frac{1+\sqrt{D}}{2} $, hence $ \Tr(\omega_D) = 1 $, $ \Tr(\omega_D^2) = \Tr\left(\frac{1+D}{4}+\frac{\sqrt{D}}{2}\right) = \frac{1+D}{2} $, $ \Tr(\omega_D^3) = \Tr\left(\frac{1+3D}{8}+\frac{3+D}{8}\sqrt{D}\right) = \frac{1+3D}{4} $, and $ \Nm(\omega_D) = \frac{1-D}{4} $.

After substituting these expressions into the formulas above, we get the lemma.
\end{proof}

If $ p $ is a prime and $ M $ a matrix with entries in $ \Z $, then we let $ M \bmod p $ denote the matrix with entries in $ \Z/p\Z $ obtained by reducing the elements of $ M $ modulo $ p $.

\begin{proposition}
\label{propRank}
Let $ K = \Q(\sqrt{D}) $ where $ D \in \Z\setminus\{0,1\} $ is squarefree, $ p \mid D $ be a prime number, and $ G $ an integral generalized quadratic form in $ n $ variables over $ K $. Assume that $ G $ is $ \Z $-valued. If $ Q $ is the quadratic form associated to $ G $ and $ M_Q $ is the matrix of $ Q $, then $ \rank(M_Q \bmod p) \leq n $.
\end{proposition}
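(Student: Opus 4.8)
The plan is to read the entries of $M_Q$ off the explicit formulas of \Cref{lemmaMQn}, reduce them modulo $p$ (using that $p\mid D$ annihilates every term carrying a factor of $D$), and then to exhibit $n$ independent linear relations among the rows of $M_Q\bmod p$. Throughout I order the variables as $x_1,y_1,\dots,x_n,y_n$ and regard $M_Q$ as an $n\times n$ array of $2\times 2$ blocks: the $(i,j)$ block is $\bigl(\begin{smallmatrix}a_{ij}/2&b_{ij}/2\\ b_{ji}/2&c_{ij}/2\end{smallmatrix}\bigr)$ for $i<j$, its transpose for $i>j$, and $\bigl(\begin{smallmatrix}a_{ii}&b_{ii}/2\\ b_{ii}/2&c_{ii}\end{smallmatrix}\bigr)$ for $i=j$. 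Since $G$ is integral and $(1,\omega_D)$ is an integral basis of $\O_K$, the numbers $r_{ij},s_{ij},t_{ij},u_{ij}$ of \Cref{lemmaMQn} lie in $\Z$; hence all entries of $M_Q$ lie in $\tfrac12\Z$, and in fact in $\Z$ when $D\equiv 2,3\pmod{4}$. When $D\equiv 1\pmod{4}$ the prime $p$ is odd, so $M_Q\bmod p$ is well defined in either case.

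First I would treat $D\equiv 2,3\pmod{4}$. Here $\omega_D=\sqrt{D}$, and \Cref{lemmaMQn} gives $b_{ij}=2D(s_{ij}-u_{ij})$, $b_{ji}=2D(s_{ij}+u_{ij})$, $c_{ij}=2D(r_{ij}-t_{ij})$ for $i<j$, together with $b_{ii}=4Ds_{ii}$ and $c_{ii}=2Dr_{ii}-Dt_{ii}$; every one of these — as well as $b_{ii}/2=2Ds_{ii}$ — is divisible by $p$. Thus the second (local) row of every $2\times 2$ block of $M_Q\bmod p$ vanishes, so rows $2,4,\dots,2n$ of $M_Q\bmod p$ are identically zero, and $\rank(M_Q\bmod p)\le n$.

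Next I would treat $D\equiv 1\pmod{4}$, where $\omega_D=\tfrac{1+\sqrt{D}}{2}$ and $p$ is odd. Reducing the formulas of \Cref{lemmaMQn} modulo $p$ yields the congruences $b_{ji}\equiv b_{ij}$, $a_{ij}\equiv 2b_{ij}$, $c_{ij}\equiv\tfrac12 b_{ij}$ for $i<j$, together with $b_{ii}\equiv a_{ii}$ and $c_{ii}\equiv\tfrac14 a_{ii}$; consequently every $2\times 2$ block of $M_Q\bmod p$ is a scalar multiple of the fixed matrix $\bigl(\begin{smallmatrix}1&1/2\\ 1/2&1/4\end{smallmatrix}\bigr)$. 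In particular the second row of each block is one half of its first row, so in $M_Q\bmod p$ row $2i$ equals $\tfrac12$ times row $2i-1$ for every $i$; the odd-indexed rows then span the row space, and again $\rank(M_Q\bmod p)\le n$.

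I expect the only genuine work to be the modular bookkeeping in the case $D\equiv 1\pmod{4}$: verifying the four congruences above from the somewhat unwieldy formulas of \Cref{lemmaMQn} while keeping careful track of the factors $\tfrac12$ and $\tfrac14$. Everything else — the block decomposition, the $D\equiv 2,3$ computation, and the passage from the block relations to the rank bound — is routine linear algebra. One small point to flag explicitly in the write-up is why $M_Q\bmod p$ makes sense when $G$ is only assumed integral and $D\equiv 1\pmod{4}$, which is settled by $p\nmid 2$.
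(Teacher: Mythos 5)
Your proof is correct and follows essentially the same route as the paper: both cases rest on the explicit coefficient formulas of \Cref{lemmaMQn}, with the even-indexed rows vanishing mod $p$ when $D\equiv 2,3\pmod 4$, and with the relation (row $2k$) $\equiv\tfrac12\,$(row $2k-1$) when $D\equiv 1\pmod 4$ --- which is exactly the paper's row operation $[M_Q]_{2k-1}-2[M_Q]_{2k}\equiv 0\pmod p$, merely repackaged via your (correct) observation that every $2\times 2$ block reduces to a scalar multiple of $\bigl(\begin{smallmatrix}1&1/2\\1/2&1/4\end{smallmatrix}\bigr)$. The point you flag about $2$ being invertible mod $p$ in the $D\equiv 1\pmod 4$ case is handled identically in the paper.
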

\begin{proof}
We introduce the following notation: if $ M $ is a $ 2n\times 2n $ matrix and $ 1 \leq k \leq 2n $, let $ [M]_k $ denote the $ k $-th row of $ M $.

If $ 1 \leq i \leq j \leq n $, then we let $ \alpha_{ij} = r_{ij}+s_{ij}\omega_D $ and $ \beta_{ij} = t_{ij}+u_{ij}\omega_D $ where $ r_{ij}, s_{ij}, t_{ij}, u_{ij} \in \Q $.

First assume that $ D \equiv 2, 3 \pmod{4} $. If $ 1 \leq k \leq n $, then the $2k$-th row of $ M_Q $ equals
\[
    [M_Q]_{2k} = \begin{pmatrix}
        \frac{b_{1k}}{2}&\frac{c_{1k}}{2}&\dots&\frac{b_{kk}}{2}&c_{kk}&\dots&\frac{b_{nk}}{2}&\frac{c_{kn}}{2}
    \end{pmatrix}.
\]
By \Cref{lemmaMQn}, if $ 1 \leq i < j \leq n $, then
\[
    \frac{b_{ij}}{2} = D(s_{ij}-u_{ij}),\ \frac{b_{ji}}{2} = D(s_{ij}+u_{ij}),\ \frac{c_{ij}}{2} = D(r_{ij}-t_{ij}),
\]
and if $ 1 \leq i \leq n $, then
\[
    \frac{b_{ii}}{2} = 2Ds_{ii},\ c_{ii} = D(2r_{ii}-t_{ii}).
\]
Since we assume that $ G $ is integral, $ r_{ij}, s_{ij}, t_{ij}, u_{ij} \in \Z $. Thus $ D $ divides every element of $ [M_Q]_{2k} $. It follows that $ M_Q \bmod{p} $ contains $ n $ zero rows, hence $ \rank(M_Q \bmod{p}) \leq n $.

Secondly, assume that $ D \equiv 1 \pmod{4} $. Let $ N_Q $ be the matrix obtained from $ M_Q $ by subtracting two times the $ 2k $-th row of $ M_Q $ from the $ (2k-1) $-th row of $ M_Q $ for every $ 1 \leq k \leq n $. The $ (2k-1) $-th row of $ N_Q $ equals
\[
    [M_Q]_{2k-1}-2[M_Q]_{2k} = \begin{pmatrix}
        \frac{a_{1k}}{2}-b_{1k}&\frac{b_{k1}}{2}-c_{1k}&\dots&a_{kk}-b_{kk}&\frac{b_{kk}}{2}-2c_{kk}&\dots&\frac{a_{kn}}{2}-b_{nk}&\frac{b_{kn}}{2}-c_{kn}
    \end{pmatrix}.
\]
By \Cref{lemmaMQn}, if $ 1 \leq i < j \leq n $, then
\begin{align*}
    \frac{a_{ij}}{2}-b_{ij}& = \frac{1}{2}\left(2r_{ij}+s_{ij}+2t_{ij}+u_{ij}\right)-\left(r_{ij}+\frac{1+D}{2}s_{ij}+t_{ij}+\frac{1-D}{2}u_{ij}\right) = D\cdot\frac{u_{ij}+s_{ij}}{2},\\
    \frac{b_{ji}}{2}-c_{ij}& = \frac{1}{2}\left(r_{ij}+\frac{1+D}{2}s_{ij}+t_{ij}+\frac{1+D}{2}u_{ij}\right)-\left(\frac{1+D}{2}r_{ij}+\frac{1+3D}{4}s_{ij}+\frac{1-D}{2}t_{ij}+\frac{1-D}{4}u_{ij}\right)\\
    & = D\cdot\frac{t_{ij}+u_{ij}-r_{ij}-s_{ij}}{2},
\end{align*}
and if $ 1 \leq i \leq n $, then
\begin{align*}
    a_{ii}-b_{ii}& = 2r_{ii}+s_{ii}+t_{ii}-\left(2r_{ii}+(1+D)s_{ii}+t_{ii}\right) = -Ds_{ii},\\
    \frac{b_{ii}}{2}-2c_{ii}& = \frac{1}{2}\left(2r_{ii}+(1+D)s_{ii}+t_{ii}\right)-2\left(\frac{1+D}{2}r_{ii}+\frac{1+3D}{4}s_{ii}+\frac{1-D}{4}t_{ii}\right) = -Dr_{ii}-Ds_{ii}+\frac{D}{2}t_{ii}.
\end{align*}
We have $ r_{ij}, s_{ij}, t_{ij}, u_{ij} \in \frac{1}{2}\Z $ because $ G $ is integral. Since we assume $ D \equiv 1 \pmod{4} $, $ p $ is an odd prime and $ 2 $ is invertible modulo $ p $. We see that every element of $ [N_Q]_{2k-1} $ vanishes modulo $ p $, hence $ N_Q \bmod{p} $ contains $ n $ zero rows and $ \rank(M_Q \bmod{p}) = \rank(N_Q \bmod{p}) \leq n $.
\end{proof}

\section{Binary and ternary generalized quadratic forms}
\label{secBinary}

Let us formally state a definition from the introduction.

\begin{definition}
\label{defZUniv}
Let $ K = \Q(\sqrt{D}) $ where $ D \in \Z_{\geq 2} $ is squarefree and let $ G $ be an integral generalized form in $ n $ variables over $ K $. Assume that $ G $ is $ \Z $-valued. If for every $ a \in \Z_{\geq 1} $, there exist $ \alpha_1, \alpha_2, \dots, \alpha_n \in \O_K $ such that
\[
    G(\alpha_1, \alpha_2, \dots, \alpha_n) = a,
\]
then we say that $ G $ is \emph{$ \Z $-universal}.
\end{definition}

The purpose of this section is to investigate binary and ternary $ \Z $-universal forms. We begin by proving one implication in statements (i) and (ii) of \Cref{thmBinary}.
\begin{proposition}
\label{propBinary}
Let $ K = \Q(\sqrt{D}) $ where $ D \in \Z_{\geq 2} $ is squarefree.
\begin{enumerate}[(i)]
    \item If $ D \equiv 2,3 \pmod{4} $ and there exists a positive definite binary generalized form over $ K $ which is $ \Z $-universal, then $ D \in \{2,3,6,7,10\} $.
    \item If $ D \equiv 1 \pmod{4} $ and there exists a \emph{classical} positive definite binary generalized form over $ K $ which is $ \Z $-universal, then $ D = 5 $.
\end{enumerate}
\end{proposition}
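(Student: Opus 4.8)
The plan is to descend to the quaternary quadratic form over $ \Z $ associated to $ G $ and then invoke the classification of universal quaternary forms. Suppose $ G $ is a positive definite binary generalized form over $ K $ which is $ \Z $-universal, and suppose moreover that $ G $ is classical when $ D \equiv 1 \pmod 4 $; in both cases (i) and (ii) the hypotheses needed below are then satisfied. Let $ Q $ be the quadratic form in $ 4 $ variables associated to $ G $ by~\eqref{eqDefQ}. Since $ G $ is integral and $ \Z $-valued, \Cref{lemmaGQ} shows that $ Q $ is a classical quadratic form over $ \Z $, and $ Q $ is positive definite because $ G $ is. Expressing each $ \alpha_i \in \O_K $ in the integral basis $ (1, \omega_D) $, one sees that $ Q $ represents over $ \Z $ exactly the rational integers that $ G $ represents over $ \O_K $; as $ G $ is $ \Z $-universal, $ Q $ represents every $ a \in \Z_{\geq 1} $. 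Thus $ Q $ is a positive definite classical universal quaternary form.

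By the $ 15 $-Theorem (\Cref{thmFifteen}) and the resulting classification, there are only finitely many such $ Q $ up to equivalence, and their determinants are bounded by an explicit constant; running through the list of universal quaternary forms (equivalently, through the quaternary escalators) one finds $ \det(M_Q) \leq 112 $. On the other hand, the hypotheses of \Cref{corDetMQ} are exactly those imposed on $ G $, so $ D^2 \mid \det(M_Q) $. Since $ \det(M_Q) $ is a positive integer, $ D^2 \leq \det(M_Q) \leq 112 $, whence $ D \leq 10 $. The squarefree integers $ D $ with $ 2 \leq D \leq 10 $ are $ 2, 3, 5, 6, 7, 10 $; those congruent to $ 2 $ or $ 3 $ modulo $ 4 $ are $ \{2, 3, 6, 7, 10\} $, giving (i), and the only one congruent to $ 1 $ modulo $ 4 $ is $ 5 $ (since $ 9 $ is not squarefree), giving (ii).

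The genuinely non-trivial ingredient is the determinant bound for classical positive definite universal quaternary forms: this is where the $ 15 $-Theorem and the explicit classification of its universal quaternary solutions enter, and it is really the only obstacle, since the remaining steps are the formal reduction to $ Q $ via \Cref{lemmaGQ} and the elementary divisibility statement of \Cref{corDetMQ}. If one preferred to avoid the sharp bound and work only with a cruder finiteness statement, the finitely many borderline values of $ D $ exceeding $ 10 $ could instead be eliminated using \Cref{propRank}: for each prime $ p \mid D $ one has $ \rank(M_Q \bmod p) \leq 2 $, which forces the unimodular $ p $-adic Jordan component of $ Q $ to be binary and isotropic (so that $ Q $ can represent all $ p $-adic units), and this, combined with $ D^2 \mid \det(M_Q) $ and the finite list, pins down $ D $ exactly.
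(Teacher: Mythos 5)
Your proposal is correct and follows essentially the same route as the paper: pass to the associated quaternary form $Q$, use \Cref{lemmaGQ} to see $Q$ is classical, bound $\det(M_Q) \leq 112$ via Bhargava's list of classical universal quaternary forms, and conclude $D^2 \leq 112$ from \Cref{corDetMQ}. The closing aside about \Cref{propRank} is unnecessary (there are no borderline values once $D^2 \leq 112$), but the main argument matches the paper's proof.
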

\begin{proof}
Suppose that $ G $ is a positive definite binary generalized form over $ K $ which is $ \Z $-universal. If $ D \equiv 1 \pmod{4} $, we further assume that $ G $ is classical. If $ Q $ is the associated quaternary quadratic form defined by \eqref{eqDefQ}, then $ Q $ is positive definite and universal.

By \Cref{lemmaGQ}, $ Q $ is also classical. Bhargava obtained, as a corollary of his proof of the 15-Theorem, a complete list of classical quaternary positive definite universal quadratic forms up to equivalence \cite[Table 5]{Bh}. Every form on this list has determinant $ \leq 112 $. By \Cref{corDetMQ}, $ D^2 \mid \det(M_Q) $, thus $ D^2 \leq 112 $. This yields the proof.
\end{proof}

\begin{lemma}
\label{lemmaMQBin}
Let $ K = \Q(\sqrt{D}) $ where $ D \in \Z\setminus\{0,1\} $ is squarefree and let $ G $ be a binary generalized form over $ K $ given by the matrix
\begin{equation}
\label{eqMGBinary}
    M_G = \begin{pmatrix}
        a&\frac{b}{2}&\frac{d}{2}&\frac{e}{2}\\
        \frac{b}{2}&c&\frac{\tau(e)}{2}&\frac{f}{2}\\
        \frac{d}{2}&\frac{\tau(e)}{2}&\tau(a)&\frac{\tau(b)}{2}\\
        \frac{e}{2}&\frac{f}{2}&\frac{\tau(b)}{2}&\tau(c)
    \end{pmatrix}
\end{equation}
where $ a, b, c, e \in K $ and $ d, f \in \Q $. Let $ a = a_1+a_2\omega_D $, $ b = b_1+b_2\omega_D $, $ c = c_1+c_2\omega_D $, and $ e = e_1+e_2\omega_D $. Let $ Q $ be the quaternary quadratic form associated to $ G $ and $ M_Q $ be the matrix of $ Q $.

If $ D \equiv 2, 3 \pmod{4} $, then
\[
    M_Q = \begin{pmatrix}
        2a_1+d&2Da_2&b_1+e_1&D(b_2-e_2)\\
        2Da_2&D(2a_1-d)&D(b_2+e_2)&D(b_1-e_1)\\
        b_1+e_1&D(b_2+e_2)&2c_1+f&2Dc_2\\
        D(b_2-e_2)&D(b_1-e_1)&2Dc_2&D(2c_1-f)
    \end{pmatrix}.
\]

If $ D \equiv 1 \pmod{4} $, then
\[
    M_Q = \begin{pmatrix}
        M_{Q,1}&M_{Q,2}\\
        M_{Q,2}^\top&M_{Q,3}
    \end{pmatrix}
\]
where
\begin{align*}
    M_{Q,1}& = \begin{pmatrix}
        2a_1+a_2+d&a_1+\frac{1+D}{2}a_2+\frac{d}{2}\\
        a_1+\frac{1+D}{2}a_2+\frac{d}{2}&\frac{1+D}{2}a_1+\frac{1+3D}{4}a_2+\frac{1-D}{4}d
    \end{pmatrix},\\
    M_{Q,2}& = \begin{pmatrix}
        b_1+\frac{b_2}{2}+e_1+\frac{e_2}{2}&\frac{b_1}{2}+\frac{1+D}{4}b_2+\frac{e_1}{2}+\frac{1-D}{4}e_2\\
        \frac{b_1}{2}+\frac{1+D}{4}b_2+\frac{e_1}{2}+\frac{1+D}{4}e_2&\frac{1+D}{4}b_1+\frac{1+3D}{8}b_2+\frac{1-D}{4}e_1+\frac{1-D}{8}e_2
    \end{pmatrix},\\
    M_{Q,3}& = \begin{pmatrix}
        2c_1+c_2+f&c_1+\frac{1+D}{2}c_2+\frac{f}{2}\\
        c_1+\frac{1+D}{2}c_2+\frac{f}{2}&\frac{1+D}{2}c_1+\frac{1+3D}{4}c_2+\frac{1-D}{4}f
    \end{pmatrix}.
\end{align*}
\end{lemma}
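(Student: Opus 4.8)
The plan is to deduce the lemma from the general coefficient formulas already established in \Cref{lemmaMQn}, after matching the data of $G$ encoded by $M_G$ with the notation of \eqref{eqGenQF}. Comparing the matrix in \eqref{eqMGBinary} block by block with the description of $M_G$ preceding \Cref{lemmaDetMQ}, one reads off $\alpha_{11} = a$, $\alpha_{12} = b$, $\alpha_{22} = c$, $\beta_{11} = d$, $\beta_{12} = e$, $\beta_{22} = f$, while the remaining coefficients $\beta_{21} = \tau(e)$ and $\gamma_{ij} = \tau(\alpha_{ij})$ are forced by the conjugate symmetry of $M_G$. Since $d, f \in \Q$ by hypothesis, the coefficients of conjugate terms of $G$ are conjugate, so \Cref{propGenPol} gives $G(\mathbf{a}) \in \Q$ for every $\mathbf{a} \in K^2$, which is exactly what is needed in order to apply \Cref{lemmaMQ,lemmaMQn}.

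Next I would record, in the notation of \Cref{lemmaMQn} (where $\alpha_{ij} = r_{ij} + s_{ij}\omega_D$ and $\beta_{ij} = t_{ij} + u_{ij}\omega_D$), that the above identification amounts to
\[
    r_{11} = a_1,\ s_{11} = a_2,\quad r_{12} = b_1,\ s_{12} = b_2,\quad r_{22} = c_1,\ s_{22} = c_2,
\]
\[
    t_{11} = d,\ u_{11} = 0,\quad t_{12} = e_1,\ u_{12} = e_2,\quad t_{22} = f,\ u_{22} = 0.
\]
Substituting these values into the explicit formulas of \Cref{lemmaMQn} produces the coefficients $a_{ij}, b_{ij}, b_{ji}, c_{ij}$ of the associated quaternary form $Q$ in each of the cases $D \equiv 2, 3 \pmod 4$ and $D \equiv 1 \pmod 4$. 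Finally, placing these coefficients into the $4 \times 4$ array $M_Q$ according to the layout displayed in \Cref{lemmaMQ} — with the off-diagonal entries carrying the extra factor $\tfrac12$ — yields precisely the matrices asserted in the statement; in particular the denominators $4$ and $8$ appearing in $M_{Q,1}$, $M_{Q,2}$, $M_{Q,3}$ arise from halving the coefficients $c_{ij}$ and $b_{ij}$, which already involve $\tfrac{1+D}{2}$, $\tfrac{1+3D}{4}$, $\tfrac{1-D}{4}$ through $\Tr(\omega_D^2)$, $\Tr(\omega_D^3)$, $\Nm(\omega_D)$.

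The argument is purely computational and poses no conceptual difficulty; the only real effort is the bookkeeping in the case $D \equiv 1 \pmod 4$, where one must carry the various fractions consistently and not confuse $b_{ij}$ with $b_{ji}$. As a cross-check one could instead compute $M_Q = T^\top M_G T$ directly from the change-of-variables matrix $T$ introduced before \Cref{lemmaDetMQ}, but the route through \Cref{lemmaMQn} is shorter and reuses work already done.
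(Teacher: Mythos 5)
Your proposal is correct and follows essentially the same route as the paper: identify $\alpha_{11}=a$, $\alpha_{12}=b$, $\alpha_{22}=c$, $\beta_{11}=d$, $\beta_{12}=e$, $\beta_{22}=f$ (hence $r_{11}=a_1$, $s_{11}=a_2$, etc., with $u_{11}=u_{22}=0$), substitute into the coefficient formulas of \Cref{lemmaMQn}, and place the results into the layout of $M_Q$ from \Cref{lemmaMQ} with the factors $\tfrac12$ on the off-diagonal entries. The explicit remark that \Cref{propGenPol} guarantees $G(\mathbf{a})\in\Q$ from the conjugate symmetry of $M_G$ and $d,f\in\Q$ is a small point the paper leaves implicit, and is a welcome addition.
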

\begin{proof}
The matrix of $ G $ is
\[
    M_G = \begin{pmatrix}
        \alpha_{11}&\frac{\alpha_{12}}{2}&\frac{\beta_{11}}{2}&\frac{\beta_{12}}{2}\\
        \frac{\alpha_{12}}{2}&\alpha_{22}&\frac{\tau(\beta_{12})}{2}&\frac{\beta_{22}}{2}\\
        \frac{\beta_{11}}{2}&\frac{\tau(\beta_{12})}{2}&\tau(\alpha_{11})&\frac{\tau(\alpha_{12})}{2}\\
        \frac{\beta_{12}}{2}&\frac{\beta_{22}}{2}&\frac{\tau(\alpha_{12})}{2}&\tau(\alpha_{22})
    \end{pmatrix}
\]
where $ \alpha_{11} = a $, $ \alpha_{12} = b $, $ \alpha_{22} = c $, $ \beta_{11} = d $, $ \beta_{12} = e $, and $ \beta_{22} = f $. In the notation of \Cref{lemmaMQn}, $ \alpha_{ij} = r_{ij}+s_{ij}\omega_D $ and $ \beta_{ij} = t_{ij}+u_{ij}\omega_D $, hence $ r_{11} = a_1 $, $ s_{11} = a_2 $, $ r_{12} = b_1 $, $ s_{12} = b_2 $, $ r_{22} = c_1 $, $ s_{22} = c_2 $, $ t_{11} = d $, $ t_{12} = e_1 $, $ u_{12} = e_2 $, and $ t_{22} = f $.

The matrix of $ Q $ is
\[
    M_Q = \begin{pmatrix}
        a_{11}&\frac{b_{11}}{2}&\frac{a_{12}}{2}&\frac{b_{12}}{2}\\
        \frac{b_{11}}{2}&c_{11}&\frac{b_{21}}{2}&\frac{c_{12}}{2}\\
        \frac{a_{12}}{2}&\frac{b_{21}}{2}&a_{22}&\frac{b_{22}}{2}\\
        \frac{b_{12}}{2}&\frac{c_{12}}{2}&\frac{b_{22}}{2}&c_{22}
    \end{pmatrix}.
\]

If $ D \equiv 2,3 \pmod{4} $, then by \Cref{lemmaMQn},
\begin{align*}
    a_{11}& = 2r_{11}+t_{11} = 2a_1+d,\\
    b_{11}& = 4Ds_{11} = 4Da_2,\\
    c_{11}& = 2Dr_{11}-Dt_{11} = D(2a_1-d),\\
    a_{12}& = 2r_{12}+2t_{12} = 2(b_1+e_1),\\
    b_{12}& = 2Ds_{12}-2Du_{12} = 2D(b_2-e_2),\\
    b_{21}& = 2Ds_{12}+2Du_{12} = 2D(b_2+e_2),\\
    c_{12}& = 2Dr_{12}-2Dt_{12} = 2D(b_1-e_1),\\
    a_{22}& = 2r_{22}+t_{22} = 2c_1+f,\\
    b_{22}& = 4Ds_{22} = 4Dc_2,\\
    c_{22}& = 2Dr_{22}-Dt_{22} = D(2c_1-f).
\end{align*}

If $ D \equiv 1 \pmod{4} $, then by \Cref{lemmaMQn},
\begin{align*}
    a_{11}& = 2r_{11}+s_{11}+t_{11} = 2a_1+a_2+d,\\
    b_{11}& = 2r_{11}+(1+D)s_{11}+t_{11} = 2a_1+(1+D)a_2+d,\\
    c_{11}& = \frac{1+D}{2}r_{11}+\frac{1+3D}{4}s_{11}+\frac{1-D}{4}t_{11} = \frac{1+D}{2}a_1+\frac{1+3D}{4}a_2+\frac{1-D}{4}d,\\
    a_{12}& = 2r_{12}+s_{12}+2t_{12}+u_{12} = 2b_1+b_2+2e_1+e_2,\\
    b_{12}& = r_{12}+\frac{1+D}{2}s_{12}+t_{12}+\frac{1-D}{2}u_{12} = b_1+\frac{1+D}{2}b_2+e_1+\frac{1-D}{2}e_2,\\
    b_{21}& = r_{12}+\frac{1+D}{2}s_{12}+t_{12}+\frac{1+D}{2}u_{12} = b_1+\frac{1+D}{2}b_2+e_1+\frac{1+D}{2}e_2,\\
    c_{12}& = \frac{1+D}{2}r_{12}+\frac{1+3D}{4}s_{12}+\frac{1-D}{2}t_{12}+\frac{1-D}{4}u_{12} = \frac{1+D}{2}b_1+\frac{1+3D}{4}b_2+\frac{1-D}{2}e_1+\frac{1-D}{4}e_2,\\
    a_{22}& = 2r_{22}+s_{22}+t_{22} = 2c_1+c_2+f,\\
    b_{22}& = 2r_{22}+(1+D)s_{22}+t_{22} = 2c_1+(1+D)c_2+f,\\
    c_{22}& = \frac{1+D}{2}r_{22}+\frac{1+3D}{4}s_{22}+\frac{1-D}{4}t_{22} = \frac{1+D}{2}c_1+\frac{1+3D}{4}c_2+\frac{1-D}{4}f.\qedhere
\end{align*}
\end{proof}

    In \Crefrange{lemmaExD2}{lemmaExD10}, we find an example of a $ \Z $-universal generalized form over $ \Q(\sqrt{D}) $ for $ D \in \{2,3,6,7,10\} $ and of a classical $ \Z $-universal generalized form over $ \Q(\sqrt{5}) $. We prove only \Cref{lemmaExD2}, the rest is stated and proved in the Appendix in \Cref{SectionFillingInPRoofs55-59}.
\begin{lemma}
\label{lemmaExD2}
Let $ K = \Q(\sqrt{2}) $ and
\[
    G_2(z, w) = z^2-\sqrt{2}zw+w^2-z\tau(z)-w\tau(w)+\tau(z)^2+\sqrt{2}\tau(z)\tau(w)+\tau(w)^2.
\]
The generalized form $ G_2 $ is $ \Z $-universal over $ K $.
\end{lemma}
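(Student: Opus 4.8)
The plan is to pass from $ G_2 $ to its associated quaternary quadratic form over $ \Z $ and then invoke the 15-Theorem (\Cref{thmFifteen}). First I would record that $ G_2 $ is an integral $ \Z $-valued generalized quadratic form: in the shape \eqref{eqGenQF} its nonzero coefficients are $ \alpha_{11}=\alpha_{22}=1 $, $ \alpha_{12}=-\sqrt 2 $, $ \beta_{11}=\beta_{22}=-1 $, $ \gamma_{11}=\gamma_{22}=1 $, $ \gamma_{12}=\sqrt 2 $, all lying in $ \O_K=\Z[\sqrt 2] $; since $ \gamma_{ij}=\tau(\alpha_{ij}) $ and $ \beta_{ji}=\tau(\beta_{ij}) $, \Cref{propGenPol} shows $ G_2 $ is $ \Q $-valued, hence $ \Z $-valued. (Concretely, $ z^2-z\tau(z)+\tau(z)^2=\Tr(z)^2-3\Nm(z) $ and $ \sqrt 2\bigl(\tau(z)\tau(w)-zw\bigr)=\sqrt 2\bigl(\tau(zw)-zw\bigr)\in\Q $.) Thus $ G_2 $ fits \Cref{defZUniv}, and it remains to prove that it represents every positive integer.

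Next I would compute the quaternary form $ Q $ of \eqref{eqDefQ} associated to $ G_2 $. Substituting $ z=x_1+y_1\sqrt 2 $, $ w=x_2+y_2\sqrt 2 $ — equivalently, reading $ M_Q $ off from \Cref{lemmaMQBin} with $ a=c=1 $, $ b=-\sqrt 2 $, $ d=f=-1 $, $ e=0 $ (so $ a_1=c_1=1 $, $ b_2=-1 $, all other parameters zero) — yields
\[
    Q(x_1,y_1,x_2,y_2)=x_1^2+6y_1^2+x_2^2+6y_2^2-4x_1y_2-4x_2y_1,
\]
with matrix
\[
    M_Q=\begin{pmatrix}1&0&0&-2\\0&6&-2&0\\0&-2&1&0\\-2&0&0&6\end{pmatrix}.
\]
By \Cref{lemmaGQ} the form $ Q $ is classical (indeed $ M_Q $ already has entries in $ \Z $).

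The one observation needed is that $ Q $ is the orthogonal sum of two binary blocks, in the variable pairs $ (x_1,y_2) $ and $ (x_2,y_1) $, each of the shape $ u^2-4uv+6v^2=(u-2v)^2+2v^2 $. Since $ (u,v)\mapsto(u-2v,v) $ is unimodular over $ \Z $, the form $ Q $ is $ \Z $-equivalent to $ x^2+2y^2+z^2+2w^2 $; in particular $ Q $ is positive definite. This diagonal form represents each of $ 1,2,3,5,6,7,10,14,15 $ (for instance $ 5=1+2+2 $, $ 6=4+2 $, $ 7=4+2+1 $, $ 10=9+1 $, $ 14=9+2+1+2 $, $ 15=9+2+4 $), so by \Cref{thmFifteen} it is universal; hence $ Q $ is universal, and therefore $ G_2 $ is $ \Z $-universal.

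There is essentially no obstacle here — every step is a bounded computation — and the only mild insight is spotting the orthogonal splitting that completes the square and identifies $ Q $ with a diagonal form, which turns the 15-Theorem check into inspection; alternatively one may verify the nine representations directly on $ Q $. The same three-step template (compute $ M_Q $; check that it is positive definite and classical; check the 15-Theorem list) is what I would reuse verbatim for the remaining fields handled in \Crefrange{lemmaExD2}{lemmaExD10}.
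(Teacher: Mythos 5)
Your proof is correct and follows essentially the same route as the paper: compute the associated quaternary form via \Cref{lemmaMQBin}, observe it is $\Z$-equivalent to the classical diagonal form $x^2+2y^2+z^2+2w^2$, and conclude by the 15-Theorem. The only differences are cosmetic --- you make explicit the $\Q$-valuedness check and the nine representations, and you realize the diagonalization by completing the square on the two orthogonal binary blocks rather than by the paper's chain of matrix equivalences, which amounts to the same computation.
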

\begin{proof}
The coefficients of $ G_2 $ are $ a = 1 $, $ b = -\sqrt{2} $, $ c = 1 $, $ d = -1 $, $ e = 0 $, $ f = -1 $. Let $ Q_2 $ be the quaternary quadratic form $ Q_2(x_1, y_1, x_2, y_2) = G_2(x_1+y_1\sqrt{2}, x_2+y_2\sqrt{2}) $ and let $ M_{Q_2} $ be the matrix of $ Q_2 $. By \Cref{lemmaMQBin} with $ D = 2 $,
\[
     M_{Q_2} = \begin{pmatrix}
        2a_1+d&4a_2&b_1+e_1&2(b_2-e_2)\\
        4a_2&2(2a_1-d)&2(b_2+e_2)&2(b_1-e_1)\\
        b_1+e_1&2(b_2+e_2)&2c_1+f&4c_2\\
        2(b_2-e_2)&2(b_1-e_1)&4c_2&2(2c_1-f)
     \end{pmatrix} = \begin{pmatrix}
        1&0&0&-2\\
        0&6&-2&0\\
        0&-2&1&0\\
        -2&0&0&6
    \end{pmatrix}.
\]
We get
\[
    M_{Q_2} \sim \begin{pmatrix}
        1&0&0&0\\
        0&6&-2&0\\
        0&-2&1&0\\
        0&0&0&2
    \end{pmatrix} \sim \begin{pmatrix}
        1&0&0&0\\
        0&2&0&0\\
        0&0&1&0\\
        0&0&0&2
    \end{pmatrix}.
\]
Since the last matrix is positive definite, $ G_2 $ is also positive definite. Moreover, the diagonal quadratic form $ x^2+2y^2+z^2+2w^2 $ is universal by the 15-Theorem, hence $ G_2 $ is $ \Z $-universal.
\end{proof}

\begin{lemma}
\label{lemmaExD3}
Let $ K = \Q(\sqrt{3}) $ and
\[
    G_3(z, w) = 2z^2-2\sqrt{3}zw+2w^2-3z\tau(z)-\sqrt{3}z\tau(w)+\sqrt{3}w\tau(z)+2\tau(z)^2+2\sqrt{3}\tau(z)\tau(w)+2\tau(w)^2.
\]
The generalized form $ G_3 $ is $ \Z $-universal over $ K $.
\end{lemma}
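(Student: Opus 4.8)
The plan is to mirror the proof of \Cref{lemmaExD2} verbatim. First I would read off the coefficients of $G_3$: in the notation of \Cref{lemmaMQBin} these are $a=2$, $b=-2\sqrt{3}$, $c=2$, $d=-3$, $e=-\sqrt{3}$, $f=0$, so that $a_1=2,\ a_2=0$, $b_1=0,\ b_2=-2$, $c_1=2,\ c_2=0$, $e_1=0,\ e_2=-1$. Substituting into the $D\equiv 2,3\pmod 4$ formula of \Cref{lemmaMQBin} with $D=3$ should give the matrix of the associated quaternary form $Q_3$:
\[
    M_{Q_3}=\begin{pmatrix}
        1&0&0&-3\\
        0&21&-9&0\\
        0&-9&4&0\\
        -3&0&0&12
    \end{pmatrix}.
\]
As a consistency check, $\det M_{Q_3}=9=D^2$, in agreement with \Cref{corDetMQ}.

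Next I would diagonalize $M_{Q_3}$ by integral congruences, exactly as in the $D=2$ case. The matrix decomposes into the two $2\times 2$ blocks supported on the coordinate pairs $\{1,4\}$ and $\{2,3\}$. Adding three times the first row and column to the fourth sends $\left(\begin{smallmatrix}1&-3\\-3&12\end{smallmatrix}\right)$ to $\operatorname{diag}(1,3)$. For $\left(\begin{smallmatrix}21&-9\\-9&4\end{smallmatrix}\right)$ I would observe that the attached binary form $21u^2-18uv+4v^2$ takes the value $1$ at $(u,v)=(1,2)$; extending this primitive vector to a basis splits off a unimodular summand, and since the block has determinant $3$ this again leaves $\operatorname{diag}(1,3)$. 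Hence $M_{Q_3}\sim\operatorname{diag}(1,1,3,3)$, so $Q_3$ (and therefore $G_3$) is positive definite and $Q_3$ is equivalent to $x^2+y^2+3z^2+3w^2$.

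Finally, this diagonal form represents each of $1,2,3,5,6,7,10,14,15$ (for instance $10=3^2+1^2$, $14=2^2+2^2+3\cdot 1^2+3\cdot 1^2$, $15=3^2+3\cdot 1^2+3\cdot 1^2$), so it is universal by \Cref{thmFifteen}, and consequently $G_3$ is $\Z$-universal. The only part that is not a mechanical substitution is the reduction of the block $\left(\begin{smallmatrix}21&-9\\-9&4\end{smallmatrix}\right)$: one has to spot a primitive vector of small value (here norm $1$) to reach a clean diagonal form rather than carry around the large coefficients $21$ and $-9$; once such a vector is found the remaining verification for the 15-Theorem is routine.
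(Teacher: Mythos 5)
Your proposal is correct and follows essentially the same route as the paper: read off the coefficients, apply \Cref{lemmaMQBin} with $D=3$ to get the stated $M_{Q_3}$, reduce it by integral congruences to $\operatorname{diag}(1,1,3,3)$, and invoke the 15-Theorem. Your reduction of the block $\left(\begin{smallmatrix}21&-9\\-9&4\end{smallmatrix}\right)$ via the primitive vector $(1,2)$ of value $1$ is exactly the row/column operation the paper performs (its intermediate matrix $\left(\begin{smallmatrix}1&-1\\-1&4\end{smallmatrix}\right)$ comes from that same vector), so the two arguments coincide.
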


\begin{lemma}
\label{lemmaExD5}
Let $ K = \Q(\sqrt{5}) $ and
\[
    G_5(z, w) = (3-\omega_5)z^2+w^2-4z\tau(z)+(3-\tau(\omega_5))\tau(z)^2+\tau(w)^2
\]
where $ \omega_5 = \frac{1+\sqrt{5}}{2} $. The generalized form $ G_5 $ is classical and $ \Z $-universal over $ K $.
\end{lemma}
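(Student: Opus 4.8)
The plan is to follow the strategy of \Cref{lemmaExD2,lemmaExD3}: pass to the associated quaternary quadratic form $ Q_5 $ and apply the 15-Theorem. Reading off the coefficients of $ G_5 $ in the format of \eqref{eqMGBinary}, we have $ a = 3-\omega_5 $ (so that $ a_1 = 3 $, $ a_2 = -1 $), $ b = 0 $, $ c = 1 $ (so that $ c_1 = 1 $, $ c_2 = 0 $), $ d = -4 $, $ e = 0 $, and $ f = 0 $. One checks directly that $ G_5 $ is integral and classical: the relevant coefficients are $ \alpha_{12} = \beta_{12} = \beta_{22} = 0 $ and $ \beta_{11} = -4 \in 2\O_K $.

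Next, I would apply \Cref{lemmaMQBin} in the case $ D = 5 \equiv 1 \pmod{4} $. The block $ M_{Q,2} $ vanishes because $ b = e = 0 $, and substituting $ a_1, a_2, c_1, c_2, d, f $ into the formulas for $ M_{Q,1} $ and $ M_{Q,3} $ (with $ \frac{1+D}{2} = 3 $, $ \frac{1+3D}{4} = 4 $, $ \frac{1-D}{4} = -1 $) gives
\[
    M_{Q_5} = \begin{pmatrix}
        1 & -2 & 0 & 0\\
        -2 & 9 & 0 & 0\\
        0 & 0 & 2 & 1\\
        0 & 0 & 1 & 3
    \end{pmatrix},
\]
so that $ Q_5(x_1, y_1, x_2, y_2) = x_1^2 - 4x_1y_1 + 9y_1^2 + 2x_2^2 + 2x_2y_2 + 3y_2^2 $. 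The leading principal minors of $ M_{Q_5} $ are $ 1, 5, 10, 25 $, all positive, so $ M_{Q_5} $ is positive definite; hence $ G_5 $ is positive definite, and the integer entries of $ M_{Q_5} $ confirm that $ Q_5 $ is classical (consistently with \Cref{lemmaGQ}).

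Completing the square in the first pair of variables shows $ Q_5 \sim x^2 + 5y^2 + 2u^2 + 2uv + 3v^2 $. I would finish by exhibiting integer vectors showing that $ Q_5 $ represents each of $ 1, 2, 3, 5, 6, 7, 10, 14, 15 $: for instance $ 1 $, $ 5 $, $ 6 = 1+5 $ and $ 14 = 9+5 $ come from $ x^2 + 5y^2 $; $ 2 $, $ 3 $, $ 7 $ come from $ 2u^2 + 2uv + 3v^2 $ at $ (1,0) $, $ (0,1) $, $ (1,1) $; $ 10 = 2 - 4 + 12 $ from $ (u,v) = (1,-2) $; and $ 15 = 18 - 6 + 3 $ from $ (u,v) = (3,-1) $. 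By \Cref{thmFifteen}, $ Q_5 $ is universal, whence $ G_5 $ is $ \Z $-universal.

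There is no serious obstacle here; the content is computational. The only points requiring care are the correct translation of the coefficients of $ G_5 $ through the formulas of \Cref{lemmaMQBin} for $ D = 5 $, and locating the handful of representing vectors for the critical numbers $ 10 $, $ 14 $, and $ 15 $.
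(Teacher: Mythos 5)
Your proposal is correct and follows essentially the same route as the paper: read off the coefficients, verify classicality, compute $M_{Q_5}$ via \Cref{lemmaMQBin}, reduce to $x^2+5y^2+2u^2+2uv+3v^2$, and invoke the 15-Theorem. The only (harmless) difference is that you explicitly exhibit representations of the nine critical integers, whereas the paper simply cites the 15-Theorem and Bhargava's table; your arithmetic for $10$, $14$, and $15$ checks out.
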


\begin{lemma}
\label{lemmaExD6}
Let $ K = \Q(\sqrt{6}) $ and $ G_6 $ be the generalized form with the matrix
\[
    M_{G_6} = \begin{pmatrix}
        3&\sqrt{6}&-\frac{5}{2}&\frac{\sqrt{6}}{2}\\
        \sqrt{6}&3&-\frac{\sqrt{6}}{2}&-\frac{1}{2}\\
        -\frac{5}{2}&-\frac{\sqrt{6}}{2}&3&-\sqrt{6}\\
        \frac{\sqrt{6}}{2}&-\frac{1}{2}&-\sqrt{6}&3
    \end{pmatrix}.
\]
The generalized form $ G_6 $ is $ \Z $-universal over $ K $.
\end{lemma}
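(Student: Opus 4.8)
The plan is to follow the same strategy as in \Cref{lemmaExD2,lemmaExD3,lemmaExD5}: pass to the associated quaternary quadratic form, reduce it over $ \Z $ to a diagonal form, and conclude via the 15-Theorem. First I would read off the coefficients of $ G_6 $ by comparing $ M_{G_6} $ with the shape~\eqref{eqMGBinary}; this gives $ a = 3 $, $ b = 2\sqrt{6} $, $ c = 3 $, $ d = -5 $, $ e = \sqrt{6} $, $ f = -1 $, i.e., in the notation of \Cref{lemmaMQBin}, $ a_1 = 3 $, $ a_2 = 0 $, $ b_1 = 0 $, $ b_2 = 2 $, $ c_1 = 3 $, $ c_2 = 0 $, $ e_1 = 0 $, $ e_2 = 1 $. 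Since $ 6 \equiv 2 \pmod{4} $, \Cref{lemmaMQBin} then produces the matrix of the associated form $ Q_6(x_1, y_1, x_2, y_2) = G_6(x_1+y_1\sqrt{6}, x_2+y_2\sqrt{6}) $, namely
\[
    M_{Q_6} = \begin{pmatrix}
        1 & 0 & 0 & 6\\
        0 & 66 & 18 & 0\\
        0 & 18 & 5 & 0\\
        6 & 0 & 0 & 42
    \end{pmatrix},
\]
whose determinant is $ 36 = D^2 $, consistent with \Cref{corDetMQ}.

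Next I would reduce $ M_{Q_6} $ under $ \Z $-congruence. After reordering the coordinates to $ (x_1, y_2, y_1, x_2) $, the matrix is block-diagonal with blocks $ \left(\begin{smallmatrix} 1 & 6\\ 6 & 42 \end{smallmatrix}\right) $ and $ \left(\begin{smallmatrix} 66 & 18\\ 18 & 5 \end{smallmatrix}\right) $. Completing the square in $ x_1 $ (the substitution $ x_1 \mapsto x_1 - 6y_2 $) replaces the first block by $ \operatorname{diag}(1, 6) $, and Lagrange reduction of the binary form $ 66y_1^2 + 36y_1x_2 + 5x_2^2 $, which has discriminant $ -24 $ and minimum $ 2 $, replaces the second block by $ \operatorname{diag}(2, 3) $. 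Hence $ Q_6 $ is $ \Z $-equivalent to $ x^2 + 2y^2 + 3z^2 + 6w^2 $. In particular $ M_{Q_6} $ is positive definite, so $ G_6 $ is positive definite, and since $ x^2 + 2y^2 + 3z^2 + 6w^2 $ represents $ 1, 2, 3, 5, 6, 7, 10, 14 $, and $ 15 $, it is universal by the 15-Theorem; therefore $ G_6 $ is $ \Z $-universal.

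I do not expect a serious obstacle here. The only point requiring a little care is the binary reduction of the $ (y_1, x_2) $-block: one must check that $ 66y_1^2 + 36y_1x_2 + 5x_2^2 $ lies in the class of $ 2u^2 + 3v^2 $ and not in the other class $ u^2 + 6v^2 $ of discriminant $ -24 $, which is clear because the form represents $ 2 $ (take $ y_1 = 1 $, $ x_2 = -4 $) but not $ 1 $. Alternatively, exactly as in \Cref{lemmaExD2,lemmaExD3}, one can exhibit the explicit unimodular substitution, verify the resulting diagonal form by hand, or look it up in \cite[Table~5]{Bh}.
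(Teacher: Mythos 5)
Your proposal is correct and follows essentially the same route as the paper: read off the coefficients, apply \Cref{lemmaMQBin} with $D=6$ to get the same matrix $M_{Q_6}$, reduce it over $\Z$ to $\operatorname{diag}(1,2,3,6)$, and invoke the 15-Theorem. The paper carries out the reduction by a short chain of congruences rather than your reordering-plus-Lagrange-reduction of the $(y_1,x_2)$-block, but the intermediate steps and the final diagonal form agree.
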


\begin{lemma}
\label{lemmaExD7}
Let $ K = \Q(\sqrt{7}) $ and $ G_7 $ be the generalized quadratic form with the matrix
\[
    M_{G_7} = \begin{pmatrix}
        3&\frac{1+2\sqrt{7}}{2}&-\frac{5}{2}&\frac{-1+2\sqrt{7}}{2}\\
        \frac{1+2\sqrt{7}}{2}&4&\frac{-1-2\sqrt{7}}{2}&\frac{3}{2}\\
        -\frac{5}{2}&\frac{-1-2\sqrt{7}}{2}&3&\frac{1-2\sqrt{7}}{2}\\
        \frac{-1+2\sqrt{7}}{2}&\frac{3}{2}&\frac{1-2\sqrt{7}}{2}&4
    \end{pmatrix}.
\]
The generalized form $ G_7 $ is $ \Z $-universal over $ K $.
\end{lemma}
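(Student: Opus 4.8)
The proof runs along the same lines as \Crefrange{lemmaExD2}{lemmaExD6}. First I would read the coefficients of $G_7$ off the entries of $M_{G_7}$ --- identifying $a = \alpha_{11}$, $b = \alpha_{12}$, $c = \alpha_{22}$, $d = \beta_{11}$, $e = \beta_{12}$, $f = \beta_{22}$ in the notation of \Cref{lemmaMQBin} --- and then apply \Cref{lemmaMQBin} with $D = 7$. Since $7 \equiv 3 \pmod 4$, the first of its two formulas is the relevant one, and it produces explicitly the Gram matrix $M_{Q_7}$ of the quaternary quadratic form $Q_7(x_1, y_1, x_2, y_2) = G_7(x_1 + y_1\sqrt{7},\, x_2 + y_2\sqrt{7})$ associated to $G_7$. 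Because $G_7$ is integral and $D \equiv 3 \pmod 4$, \Cref{lemmaGQ} shows $Q_7$ is classical, and \Cref{corDetMQ} shows $49 \mid \det(M_{Q_7})$; computing the determinant directly gives $\det(M_{Q_7}) = 49$.

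Next I would bring $M_{Q_7}$ into a simple shape by a sequence of integral congruences --- simultaneous row and column operations realizing a unimodular change of basis of $\O_K^2 \cong \Z^4$ --- for instance by first splitting off the $\langle 1 \rangle$ summand coming from the first coordinate and then reducing the remaining ternary block. Sylvester's criterion applied to the reduced matrix (all leading principal minors positive) then shows $Q_7$, and hence $G_7$, is positive definite. It remains to prove that $Q_7$ is universal; since $Q_7$ is classical, the $15$-Theorem (\Cref{thmFifteen}) applies, so this reduces to checking that $Q_7$ represents each of $1, 2, 3, 5, 6, 7, 10, 14$, and $15$. The most economical way to conclude is to identify the reduced Gram matrix with one of the $207$ classical positive definite universal quaternary forms tabulated in \cite[Table~5]{Bh}, exactly as in the proof of \Cref{lemmaExD5}; the condition $\det = 49$ leaves only a handful of candidates on that list.

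I expect the crux to be this final identification. For $D \in \{2, 3, 6\}$ the ternary block left after removing $\langle 1 \rangle$ diagonalizes over $\Z$ into a diagonal form visibly equal to one of Ramanujan's diagonal universal quaternary forms, which makes universality immediate; for $D = 7$ no such shortcut is available --- inspecting the two diagonal quaternary forms of determinant $49$, namely $\langle 1, 1, 1, 49 \rangle$ and $\langle 1, 1, 7, 7 \rangle$, shows that neither is universal --- so one must carry the reduction all the way to a genuinely non-diagonal Gram matrix and then either recognize it in Bhargava's table or, failing that, discharge the nine representability checks of the $15$-Theorem directly by exhibiting explicit representing vectors.
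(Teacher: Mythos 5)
Your proposal is correct and follows essentially the same route as the paper: compute $M_{Q_7}$ from \Cref{lemmaMQBin} with $D=7$, reduce it by a unimodular congruence, and identify the result with a classical universal quaternary form via the 15-Theorem and \cite[Table~5]{Bh}. The paper carries out the step you leave as a plan by exhibiting an explicit $A_7 \in \SL_4(\Z)$ with $A_7^\top M_{Q_7} A_7$ equal to the (non-diagonal, determinant $49$) form $49{:}\ 2\ 4\ 7\ 0\ 0\ 2$, exactly the kind of target your last paragraph anticipates.
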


\begin{lemma}
\label{lemmaExD10}
Let $ K = \Q(\sqrt{10}) $. If $ G_{10} $ is the generalized quadratic form with the matrix
\[
    M_{G_{10}} = \begin{pmatrix}
        3&\frac{\sqrt{10}}{2}&-\frac{5}{2}&\frac{\sqrt{10}}{2}\\
        \frac{\sqrt{10}}{2}&5&-\frac{\sqrt{10}}{2}&\frac{7}{2}\\
        -\frac{5}{2}&-\frac{\sqrt{10}}{2}&3&\frac{-\sqrt{10}}{2}\\
        \frac{\sqrt{10}}{2}&\frac{7}{2}&\frac{\sqrt{10}}{2}&5
    \end{pmatrix},
\]
then $ G_{10} $ is $ \Z $-universal over $ K $.
\end{lemma}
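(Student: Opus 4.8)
The plan is to follow the template of \Crefrange{lemmaExD2}{lemmaExD7}: pass to the associated quaternary quadratic form and invoke the classical 15-Theorem. First I would read off the coefficients of $G_{10}$ from $M_{G_{10}}$ by matching it against the general shape \eqref{eqMGBinary}; since $10 \equiv 2 \pmod{4}$ we have $\omega_{10} = \sqrt{10}$, and one obtains $a = 3$, $c = 5$, $d = -5$, $f = 7$, and $b = e = \sqrt{10}$. In particular $G_{10}$ is an integral, $\Z$-valued generalized form. Feeding $D = 10$ and these values into the $D \equiv 2,3 \pmod{4}$ case of \Cref{lemmaMQBin} produces the integer matrix $M_{Q_{10}}$ of the associated quaternary quadratic form $Q_{10}(x_1, y_1, x_2, y_2) = G_{10}(x_1 + y_1\sqrt{10}, x_2 + y_2\sqrt{10})$; by \Cref{corDetMQ} its determinant is divisible by $D^2 = 100$, and since every classical universal quaternary form has determinant at most $112$, a useful sanity check is that $\det M_{Q_{10}}$ should come out to be exactly $100$.

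The second step is to exhibit an explicit unimodular change of variables carrying $M_{Q_{10}}$ to a positive definite integer matrix that is visibly equivalent to one of the classical universal quaternary forms in Bhargava's table \cite[Table~5]{Bh} --- for instance the diagonal form $x^2 + 2y^2 + 5z^2 + 10w^2$, or more generally any form one can check by hand represents $1, 2, 3, 5, 6, 7, 10, 14, 15$. This can be carried out either by a chain of elementary integral congruences (the $\sim$ reductions used in \Cref{lemmaExD2,lemmaExD3}) or, as in \Cref{lemmaExD7}, by writing down a single matrix $S \in \SL_4(\Z)$ and checking $\det S = 1$. Positive definiteness of the reduced matrix forces $M_{G_{10}}$ to be positive definite, and \Cref{thmFifteen} then shows $Q_{10}$ represents every positive integer. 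Since $(1, \omega_{10})$ is an integral basis of $\O_K$ and $Q_{10}(x_1, y_1, x_2, y_2) = G_{10}(x_1 + y_1\omega_{10}, x_2 + y_2\omega_{10})$, this is precisely the assertion that $G_{10}$ is $\Z$-universal.

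The only substantive work is locating the reducing transformation and confirming unimodularity --- the analogue of the matrix $A_7$ in \Cref{lemmaExD7} --- which is purely computational and is where the main effort lies. A minor point to verify at the outset is the internal consistency of $M_{G_{10}}$: it must be symmetric and must represent a $\Q$-valued generalized form, so that $d, f \in \Q$ and the $(3,4)$ and $(4,3)$ entries both equal $\tau(b)/2$; these are precisely the hypotheses under which \Cref{lemmaMQBin} and \Cref{corDetMQ} apply.
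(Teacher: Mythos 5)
Your proposal follows exactly the paper's route: read off $a=3$, $b=e=\sqrt{10}$, $c=5$, $d=-5$, $f=7$, form $M_{Q_{10}}$ via \Cref{lemmaMQBin} with $D=10$, and conjugate by an explicit matrix in $\SL_4(\Z)$ to reach the diagonal form $x^2+2y^2+5z^2+10w^2$ (which is indeed the form the paper lands on, of determinant $100$), then conclude by the 15-Theorem. The only piece you defer --- exhibiting the unimodular transformation, the analogue of $A_7$ --- is precisely the computation the paper supplies as the matrix $A_{10}$, so the plan is correct and complete in outline.
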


Now we are ready to prove \Cref{thmBinary}, which we restate here for the convenience of the reader.

\begin{theorem}
\label{thmBinary2}
Let $ K = \Q(\sqrt{D}) $ where $ D \in \Z_{\geq 2} $ is squarefree.
\begin{enumerate}[i)]
\item Assume that $ D \equiv 2, 3 \pmod{4} $. A binary $ \Z $-universal generalized quadratic form exists over $ K $ if and only if $ D \in \{2,3,6,7,10\} $.
\item Assume that $ D \equiv 1 \pmod{4} $. A \emph{classical} binary $ \Z $-universal generalized quadratic form exists over $ K $ if and only if $ D = 5 $.
\end{enumerate}
\end{theorem}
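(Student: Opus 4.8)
The plan is to separate \Cref{thmBinary2} into its two implications and treat them by completely different tools, the building blocks being already in place above.

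The necessity direction --- that a $\Z$-universal form forces $D$ into the stated list --- is exactly \Cref{propBinary}, so I would simply invoke it. For completeness, the mechanism is: given a binary $\Z$-universal generalized form $G$ over $K$, assumed classical when $D\equiv 1\pmod{4}$, \Cref{lemmaGQ} shows the associated quaternary quadratic form $Q$ from \eqref{eqDefQ} is positive definite, integral, classical, and universal over $\Z$; Bhargava's classification of the $207$ classical quaternary universal forms \cite[Table~5]{Bh} bounds $\det(M_Q)\leq 112$; and \Cref{corDetMQ} gives $D^2\mid\det(M_Q)$, whence $D^2\leq 112$ and $D\leq 10$. Listing the squarefree integers $D\geq 2$ with $D\leq 10$ and sorting them by residue modulo $4$ leaves $\{2,3,6,7,10\}$ in case (i) and $\{5\}$ in case (ii).

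For the sufficiency direction I would, for each $D$ in the relevant list, exhibit an explicit positive definite binary generalized quadratic form over $\Q(\sqrt{D})$ (classical in the case $D=5$) and verify it is $\Z$-universal; this is carried out in \Crefrange{lemmaExD2}{lemmaExD10}. For each $D$ one writes down a concrete $G_D$, computes the matrix $M_{Q_D}$ of its associated quaternary form via \Cref{lemmaMQBin}, produces an explicit matrix in $\SL_4(\Z)$ reducing $M_{Q_D}$ to a diagonal or small block-diagonal matrix that is manifestly universal by \Cref{thmFifteen} (for instance $\langle 1,2,1,2\rangle$ for $D=2$, $\langle 1,1,3,3\rangle$ for $D=3$, or forms already tabulated in \cite[Table~5]{Bh}), and concludes that $Q_D$, hence $G_D$, is positive definite and $\Z$-universal. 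Chaining the two implications proves the theorem.

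The only genuinely non-routine step, I expect, is hidden inside the sufficiency direction: \emph{guessing} the coefficients of each $G_D$. The constraints are rigid --- $M_{G_D}$ must be positive definite and (the case $D=5$ aside) integral, the form must be $\Z$-valued, and by \Cref{lemmaMQBin} together with $D^2\mid\det(M_Q)$ the associated quaternary form must land among the finitely many classical universal quaternaries of the right determinant --- so in practice one runs a short finite search over admissible coefficient vectors and tests each resulting quaternary form against Bhargava's table. Once a working candidate is found, the verification reduces to a mechanical diagonalization of an integral quadratic form over $\Z$, so no further obstacle arises.
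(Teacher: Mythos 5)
Your proposal matches the paper's proof exactly: necessity via \Cref{propBinary} (itself resting on \Cref{lemmaGQ}, \Cref{corDetMQ}, and Bhargava's table to get $D^2 \leq 112$), and sufficiency via the explicit examples of \Crefrange{lemmaExD2}{lemmaExD10}. No gaps; the argument is correct as outlined.
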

\begin{proof}
First we prove (i). We assume that $ D \equiv 2, 3 \pmod{4} $. If a binary $ \Z $-universal generalized form exists over $ K = \Q(\sqrt{D}) $, then $ D \in \{2,3,6,7,10\} $ by \Cref{propBinary} (i). Conversely, in \Cref{lemmaExD2,lemmaExD3,lemmaExD6,lemmaExD7,lemmaExD10} we found an example of a generalized form with the required properties over each of these fields.

Next, we prove (ii). We assume that $ D \equiv 1 \pmod{4} $. If a classical binary $ \Z $-universal generalized form exists over $ K = \Q(\sqrt{D}) $, then $ D = 5 $ by \Cref{propBinary} (ii). We found an example of such a form over $ \Q(\sqrt{5}) $ in \Cref{lemmaExD5}.
\end{proof}

\Cref{thmTernary} is an immediate consequence of the following slightly refined version.
\begin{theorem}
\label{thmTernary2}
Let $ K = \Q(\sqrt{D}) $ where $ D \in \Z_{\geq 2} $ is squarefree. Assume that one of the following two conditions is satisfied:
\begin{enumerate}[i)]
\item $ D \equiv 2, 3 \pmod{4} $ and there exists a ternary $ \Z $-universal generalized quadratic form over $ K $,
\item $ D \equiv 1 \pmod{4} $ and there exists a \emph{classical} ternary $ \Z $-universal generalized quadratic form over $ K $.
\end{enumerate}
Then $ D \in \{1,\dots,62\}\cup\{66,70,74,77,78,82,85,86,87,93,94,95,102,106,110\} $.
\end{theorem}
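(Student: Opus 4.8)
The plan is to pass to the associated quadratic form over $\Z$ and run an escalation argument in the style of Bhargava's proof of the $15$-Theorem, constrained at each step by the divisibility and rank information of \Cref{corDetMQ,propRank}. Concretely: if $G$ is as in (i) or (ii), let $Q$ be the quadratic form in $6$ variables associated to $G$ by~\eqref{eqDefQ}. Then $Q$ is positive definite (hence of rank $6$), it is universal (since $G$ is $\Z$-universal), and it is classical by \Cref{lemmaGQ} (using that $G$ is integral when $D\equiv 2,3\pmod 4$ and classical when $D\equiv 1\pmod 4$); moreover $D^3\mid\det(M_Q)$ by \Cref{corDetMQ} and $\rank(M_Q\bmod p)\le 3$ for every prime $p\mid D$ by \Cref{propRank}. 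So it suffices to prove the following: if $Q$ is a classical positive definite universal quadratic form of rank $6$ over $\Z$ with $D^3\mid\det(M_Q)$ and $\rank(M_Q\bmod p)\le 3$ for all $p\mid D$, then $D$ lies in the stated set.

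Since $Q$ represents $1$ and a norm-$1$ vector of a classical integral lattice splits off an orthogonal summand $\langle 1\rangle$, one may write $Q\cong\langle 1^k\rangle\perp Q'$ where $Q'$ is classical, positive definite, of rank $6-k$, with minimum $\ge 2$, and $k\ge 1$. If $k\ge 4$ then $M_Q$ has $I_4$ as an orthogonal block, so $\rank(M_Q\bmod p)\ge 4$ for every prime $p$, contradicting the hypothesis; hence $k\in\{1,2,3\}$. Since reduction modulo $p$ respects orthogonal sums, $\rank(M_{Q'}\bmod p)\le 3-k$ for $p\mid D$; since $\det(M_{Q'})=\det(M_Q)$, we have $D^3\mid\det(M_{Q'})$; and $\langle 1^k\rangle\perp Q'$ is universal, which by the $15$-Theorem is equivalent to its representing $1,2,3,5,6,7,10,14,15$.

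If $k=3$, then $\rank(M_{Q'}\bmod p)=0$ for all $p\mid D$, so $p$ divides every entry of the integral matrix $M_{Q'}$; hence $D\mid M_{Q'}$ entrywise and $Q'=D\,Q''$ with $Q''$ an integral positive definite ternary form, $Q''\ne 0$. Since $\langle 1,1,1\rangle$ fails to represent $7$, universality forces $7=s+D\,Q''(v)$ with $s$ a sum of at most three squares and $v\ne 0$, so $D\le D\,Q''(v)\le 7$. The cases $k\in\{1,2\}$ carry the real weight. For $p\mid D$, the bound $\rank(M_{Q'}\bmod p)\le 3-k\le 2$ forces the unimodular Jordan component of $Q'\otimes\Z_p$ to have rank $\le 2$, so $Q'$ possesses a finite-index sublattice of rank $6-k$ on which the quadratic form is divisible by $D$; in particular $v_p(\det M_{Q'})\ge 3$. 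At the same time, universality of $\langle 1^k\rangle\perp Q'$ (via the $15$-Theorem checklist) forces $Q'$ to represent small values --- for instance, when $k=2$, since $\langle 1,1\rangle$ misses $3$ and $14$, the form $Q'$ represents a value in $\{2,3\}$ and a value $\le 14$ --- so $Q'$ has small minimum while being ``mostly divisible by $D$''. One then escalates $Q'$ inside $Q$: whenever the escalation would split off a further short vector lying outside the $p$-modular part, the argument above recurs and bounds $D$; otherwise one reaches one of the finitely many non-split escalator lattices of rank $\le 5$ (the $D_4$ lattice and its higher-rank analogues). Running through this finite list of shapes for $Q'$, imposing $D^3\mid\det(M_{Q'})$ together with $\rank(M_{Q'}\bmod p)\le 3-k$, and --- when $D$ is not a prime power --- tracking how the distinct $p$-modular subspaces of $Q'$ can be arranged inside a lattice of rank $6-k$, one obtains the finite list of admissible $D$, the largest of which is $110$.

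The main obstacle is precisely this last enumeration: bounding and classifying the rank-$4$ and rank-$5$ classical positive definite ``bricks'' $Q'$ that complete $\langle 1^k\rangle$ to a universal senary form, controlling their determinants, and --- most delicately --- handling the gluings that arise when $D$ has several prime factors, since distinct primes then constrain distinct subspaces of $Q'$. This is a finite but heavy computation, the higher-rank counterpart of Bhargava's classification of classical universal quaternary forms (his Table~5, all of determinant $\le 112$); pinning down the precise exceptional values $66,70,\dots,110$ is expected to require machine assistance.
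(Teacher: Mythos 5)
Your reduction to the associated senary form $Q$ and the constraints $D^3\mid\det(M_Q)$ and $\rank(M_Q\bmod p)\le 3$ for $p\mid D$ is the right starting point, and your disposal of the cases $k\ge 4$ and $k=3$ is sound. But the substance of the theorem lies in the cases $k\in\{1,2\}$, and there your argument is only a plan: you describe an escalation of $Q'$ inside $Q$, assert that one reaches ``one of the finitely many non-split escalator lattices of rank $\le 5$'', and claim that running through this list while ``tracking the gluings'' produces the stated set of $D$'s ending at $110$ --- while conceding that the enumeration ``is expected to require machine assistance''. No bound on $D$ is actually extracted in these cases, and the exceptional values $66,70,\dots,110$ are never derived. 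Since the statement to be proved \emph{is} that explicit finite list, this is a genuine gap, not a deferral of routine detail.

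The paper closes the argument with a short device that your sketch circles around but never deploys: by Bhargava's escalation method, \emph{every} classical universal form, regardless of rank, contains as a sublattice one of the finitely many quaternary escalator lattices of \cite[Table~3]{Bh}, each of determinant at most $112$. Taking such a sublattice $R\subseteq Q$ with Gram matrix $M_R$, one has $\rank(M_R\bmod p)\le\rank(M_Q\bmod p)\le 3$ for each prime $p\mid D$ by \Cref{propRank}; since $M_R$ is $4\times 4$, this forces $p\mid\det(M_R)$, and squarefreeness of $D$ gives $D\mid\det(M_R)\le 112$. The list in the statement is then read off by checking which squarefree integers divide the determinant of some entry of Table~3 --- no case analysis on $k$, no senary escalation, and no gluing argument. (Note also that you invoke Table~5, the classification of universal quaternary forms, whereas what is needed is Table~3, the escalator lattices, which are not themselves required to be universal.) If you want to complete your write-up, the missing ingredient is precisely this citation of a quaternary escalator sublattice with universally bounded determinant.
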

\begin{proof}
Let $ G $ be a ternary generalized quadratic form over $ K $ which is $ \Z $-universal. If $ D \equiv 1 \pmod{4} $, then we further assume that $ G $ is classical. Let $ Q $ be the quadratic form in $ 6 $ variables associated to $ G $ and let $ M_Q $ be the matrix of $ Q $. By \Cref{lemmaGQ}, $ Q $ is classical.

A classical universal quadratic form has a quaternary subform (not necessarily universal) obtained by the method of escalations \cite[p. 31]{Bh}. Let $ R $ be such a subform of $ Q $ and $ M_R $ the matrix of $ R $. If $ p \mid D $ is a prime, then $ \rank(M_Q \bmod{p}) \leq 3 $ by \Cref{propRank}. The rank of $ R $ over $ \Z/p\Z $ is less than or equal to the rank of $ Q $ over $ \Z/p\Z $, hence $ \rank(M_R \bmod{p}) \leq 3 $. Since $ M_R $ is a $ 4 \times 4 $ matrix, $ \det(M_R) \equiv 0 \pmod{p} $.

This proves $ D \mid \det(M_R) $. Every quaternary form obtained by the method of escalations is listed in \cite[Table 3]{Bh} and in particular, has determinant $ \leq 112 $. It remains to determine which squarefree $ D \leq 112 $ divide the determinant of some form on the list. Every squarefree $ D \in \{1, \dots, 62\} $ satisfies this condition. Additionally, the squarefree determinants greater than $ 62 $ on the list are $ 66,70,74,77,78,82,85,86,87,93,94,95,102,106,110 $.
\end{proof}

Using the 290-Theorem, we can remove the ``classical" assumption in the case $ D \equiv 1 \pmod{4} $.

\begin{theorem}
\label{thmBinTer2}
There are only finitely many real quadratic fields which admit a binary or ternary $ \Z $-universal generalized quadratic form.
\end{theorem}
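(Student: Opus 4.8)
The plan is to dispose of everything that is not already covered by \Cref{thmBinary2,thmTernary2}, with the 290-Theorem taking over the role that the 15-Theorem played there. Those two theorems already give finiteness whenever $ D \equiv 2, 3 \pmod 4 $, and whenever $ D \equiv 1 \pmod 4 $ and the generalized form in question is classical. It therefore suffices to bound $ D $ under the hypothesis that $ D \equiv 1 \pmod 4 $ and that $ K = \Q(\sqrt{D}) $ carries a binary or ternary $ \Z $-universal generalized form $ G $ that is integral but not necessarily classical. Fix such a $ G $ in $ n \in \{2, 3\} $ variables and let $ Q $ be the associated quadratic form in $ 2n $ variables. By \Cref{lemmaGQ}, $ Q $ is positive definite and integral, and it is universal because $ G $ is $ \Z $-universal; also $ 2M_Q $ has integer entries. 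Finally, since $ D \equiv 1 \pmod 4 $, every prime dividing $ D $ is odd, which is what keeps the half-integral entries of the Gram matrices below harmless.

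The key input is the escalation analysis underlying the proof of the 290-Theorem in \cite{BH}, which is the non-classical counterpart of \cite[Table~3]{Bh}. It produces a \emph{finite} list of rank-$4$ escalator lattices, hence an absolute constant $ B $ with $ 0 < \det(2M_R) \le B $ for every rank-$4$ escalator lattice $ R $, together with the fact that every positive definite integral universal quadratic form of rank at least $ 4 $ contains some rank-$4$ escalator lattice as a sublattice (run the escalation process inside the form: at each stage the current truant is represented, so a new independent vector can be adjoined). In the binary case $ n = 2 $ one may instead quote \cite[Theorem~4]{BH} directly, since there are only $ 6436 $ universal quaternary forms and so $ \det(2M_Q) $ is bounded. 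Applying this to our $ Q $ yields a rank-$4$ sublattice $ R \subseteq Q $ with $ \det(2M_R) \le B $.

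Now I would bound $ D $ exactly as in the proof of \Cref{thmTernary2}. Fix a prime $ p \mid D $. By \Cref{propRank} we have $ \rank(M_Q \bmod p) \le n \le 3 $. Expressing a $ \Z $-basis of $ R $ in a $ \Z $-basis of $ Q $ by an integer matrix $ U $ gives $ M_R = U^\top M_Q U $, so $ \rank(M_R \bmod p) \le \rank(M_Q \bmod p) \le 3 < 4 $; thus the $ 4 \times 4 $ matrix $ M_R \bmod p $ is singular, and since $ p $ is odd so is $ 2M_R \bmod p $, i.e.\ $ p \mid \det(2M_R) $. As this holds for every prime divisor of the squarefree integer $ D $, we get $ D \mid \det(2M_R) $, and since $ 0 < \det(2M_R) \le B $ we conclude $ D \le B $. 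Together with \Cref{thmBinary2,thmTernary2}, this shows that only finitely many real quadratic fields admit a binary or ternary $ \Z $-universal generalized quadratic form.

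The step most in need of care is the appeal to \cite{BH}: one must extract the precise facts that the list of rank-$4$ escalator lattices arising in the 290-Theorem is finite and that any positive definite integral universal lattice of rank $ \ge 4 $ contains one of them as a sublattice, so that a uniform determinant bound $ B $ is genuinely available (unlike in the 15-Theorem setting, this list and $ B $ are substantially larger, but still explicit). Everything else is routine once $ D \equiv 1 \pmod 4 $ has forced the relevant primes to be odd: the rank inequality $ \rank(U^\top M_Q U \bmod p) \le \rank(M_Q \bmod p) $ for a sublattice, and the passage between $ M_R $ and the integer matrix $ 2M_R $ modulo $ p $.
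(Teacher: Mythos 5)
Your argument is correct and follows essentially the same route as the paper: reduce to $D \equiv 1 \pmod 4$ via \Cref{thmBinary2,thmTernary2}, invoke the finiteness of the quaternary universal forms (binary case) and of the rank-$4$ escalators from the 290-Theorem (ternary case), and combine \Cref{propRank} with the resulting determinant bound to force $D \mid \det(2M_R)$ and hence $D \le B$. The only cosmetic difference is that the paper handles the binary case via the identity $\det(M_Q) = D^2\det(M_G)$ from \Cref{lemmaDetMQ} (yielding $D^2 \mid 16\det(M_Q)$) rather than through \Cref{propRank}, but both give the same finiteness conclusion.
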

\begin{proof}
We show that there exists a positive constant $ C $ such that $ D \leq C $ for every such field $ K = \Q(\sqrt{D}) $. If $ D \equiv 2,3 \pmod{4} $, then the statement follows from \Cref{thmBinary2,thmTernary2}. Hence, we can assume that $ D \equiv 1 \pmod{4} $. Let $ G $ be a $ \Z $-universal generalized form over $ K $ with matrix $ M_G $ and $ Q $ be the quadratic form associated to $ G $ with matrix $ M_Q $. 

First, assume that $ G $ is binary. If $ M_G $ is the matrix of $ G $ and $ M_Q $ is the matrix of $ Q $, then $ \det(M_Q) = D^2\det(M_G) $ by \Cref{lemmaDetMQ}. The matrix $ M_G $ is a $ 4\times 4 $ matrix with entries in $ \frac{1}{2}\O_K $, hence $ 16\det(M_G) \in \O_K \cap \Q = \Z $, and we get $ D^2 \mid 16\det(M_Q) $.

A corollary of the 290-Theorem is that there are exactly 6436 universal quaternary forms \cite[Theorem 4]{BH}. If we let $ m $ be the maximum of their determinants, then $ D^2 \leq 16m $.

Secondly, assume that $ G $ is ternary. A universal quadratic form contains a quaternary subform equivalent to one of $ 6560 $ basic escalators \cite[p. 4]{BH}. Let $ R $ be such a subform of $ Q $ and $ M_R $ the matrix of $ R $. If $ p \mid D $ is a prime, then $ \rank(M_Q \bmod{p}) \leq 3 $ by \Cref{propRank}, hence $ \rank(M_R \bmod{p}) \leq 3 $ as well. Since $ M_R $ is a $ 4\times 4 $ matrix, $ \det(M_R) \equiv 0 \pmod{p} $, and since the entries of $ M_R $ are in $ \frac{1}{2}\Z $, we have $ 16\det(M_R) \in \Z $.

This proves $ D \mid 16\det(M_R) $. If we let $ m $ be the maximum of the determinants of the basic escalators, then $ D \leq 16m $.
\end{proof}

In contrast to \Cref{thmBinTer2}, it is not difficult to construct a $ \Z $-universal generalized form in $ 4 $ variables over every real quadratic field.

\begin{proposition}
\label{propFourVars}
Let $ K = \Q(\sqrt{D}) $ where $ D \in \Z_{\geq 2} $ is squarefree. The generalized quadratic form
\begin{align*}
    G(z_1,z_2,z_3,z_4)& = z_1^2-z_1\tau(z_1)+\tau(z_1)^2+z_2^2-z_2\tau(z_2)+\tau(z_2)^2\\
    &+z_3^2-z_3\tau(z_3)+\tau(z_3)^2+z_4^2-z_4\tau(z_4)+\tau(z_4)^2
\end{align*}
is $ \Z $-universal over $ K $. Thus, the minimal rank of a $ \Z $-universal generalized form over $ K $ is at most $ 4 $.
\end{proposition}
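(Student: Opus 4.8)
The plan is to read off all the required properties of $G$ directly from its shape, the key point being that restricting each variable to $\Z \subseteq \O_K$ collapses $G$ to the classical sum of four squares. First I would check that $G$ is an integral, $\Z$-valued generalized quadratic form. In the notation of \Cref{defGenQF}, the only nonzero coefficients of $G$ are $\alpha_{ii} = \gamma_{ii} = 1$ and $\beta_{ii} = -1$ for $1 \le i \le 4$, all of which lie in $\O_K$, so $G$ is integral; the identities $\gamma_{ii} = \tau(\alpha_{ii})$ and $\beta_{ii} = \tau(\beta_{ii})$ hold trivially, so $G$ is $\Q$-valued by \Cref{propGenPol}; and since each summand satisfies
\[
    z_i^2 - z_i\tau(z_i) + \tau(z_i)^2 = \Tr(z_i^2) - \Nm(z_i),
\]
which is a rational integer whenever $z_i \in \O_K$, the form $G$ is in fact $\Z$-valued.

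Next I would establish positive definiteness. Expressing the variables in the integral basis as in~\eqref{eqDefQ} and substituting the values of $\Tr(\omega_D)$ and $\Nm(\omega_D)$ used in the proof of \Cref{lemmaMQn}, a short computation shows that the associated quadratic form $Q$ is $\sum_{i=1}^4 \bigl(x_i^2 + 3D y_i^2\bigr)$ when $D \equiv 2, 3 \pmod 4$ and $\sum_{i=1}^4 \bigl(x_i^2 + x_i y_i + \tfrac{1+3D}{4} y_i^2\bigr)$ when $D \equiv 1 \pmod 4$. In both cases $Q$ is positive definite, since each binary block has positive leading coefficient and discriminant $-3D < 0$; as $M_Q = T^\top M_G T$ with $T$ real and invertible, $M_G$ is positive definite as well. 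Alternatively, positive definiteness is immediate from the fact that $u^2 - uv + v^2$ is a positive definite binary form and $(x_i, y_i) \mapsto (z_i, \tau(z_i))$ is an $\R$-linear automorphism of $\R^2$.

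Finally, to see that $G$ represents every $a \in \Z_{\ge 1}$, I would simply take $z_1, z_2, z_3, z_4 \in \Z \subseteq \O_K$; then $\tau(z_i) = z_i$, each summand reduces to $z_i^2$, and
\[
    G(z_1, z_2, z_3, z_4) = z_1^2 + z_2^2 + z_3^2 + z_4^2,
\]
which represents every nonnegative integer by Lagrange's four-square theorem. Hence $G$ is $\Z$-universal, and the bound on the minimal rank of a $\Z$-universal generalized form over $K$ follows at once. I do not anticipate any genuine obstacle in this argument: the only step requiring computation is the positive-definiteness check, which is routine and, as noted, can be bypassed entirely.
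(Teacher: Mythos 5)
Your proof is correct and follows essentially the same route as the paper: restrict all variables to $\Z \subseteq \O_K$ so that $G$ collapses to $z_1^2+z_2^2+z_3^2+z_4^2$ and invoke Lagrange's Four Square Theorem, with positive definiteness checked separately. The only (inconsequential) slip is that the binary block $x_i^2+3Dy_i^2$ in the case $D \equiv 2,3 \pmod 4$ has discriminant $-12D$ rather than $-3D$; it is of course still negative, so the argument stands.
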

\begin{proof}
Clearly, $ G $ is positive definite. It is enough to show that $ G $ represents every $ a \in \Z_{\geq 1} $ over $ \Z $. If $ z_i \in \Z $ for $ i = 1, \dots, 4 $, then $ \tau(z_i) = z_i $, hence
\[
    G(z_1, z_2, z_3, z_4) = z_1^2+z_2^2+z_3^2+z_4^2.
\]
This quadratic form is universal by Lagrange's Four Square Theorem.
\end{proof}

We end this section with a proposition that is used in \Cref{secExamples2}.

\begin{proposition}
\label{propDetBinary}
Let $ K = \Q(\sqrt{D}) $ where $ D \in \Z\setminus\{0, 1\} $ is squarefree, $ D \equiv 2, 3 \pmod{4} $. Let $ G $ be an integral binary generalized form over $ K $. Assume that $ G $ is $ \Z $-valued. If $ Q $ is the quaternary quadratic form associated to $ G $ and $ M_Q $ is the matrix of $ Q $, then
\[
    \frac{\det(M_Q)}{D^2} \equiv 0, 1 \pmod{4}.
\]
\end{proposition}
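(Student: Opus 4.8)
The plan is to reduce the claim to a congruence computation on the matrix $2M_G$ and then to recognize the resulting expression as a square modulo $4$. Since $D \equiv 2, 3 \pmod 4$ and $n = 2$, \Cref{lemmaDetMQ} gives $\det(M_Q) = D^2\det(2M_G)$; moreover $Q$ is classical by \Cref{lemmaGQ}, so $M_Q$ has integer entries, and $D^2 \mid \det(M_Q)$ by \Cref{corDetMQ}. Hence $\det(M_Q)/D^2 = \det(2M_G) \in \Z$, and it suffices to prove $\det(2M_G) \equiv 0, 1 \pmod 4$. Since $G$ is integral and $\Z$-valued, its matrix has the shape \eqref{eqMGBinary} with $a, b, c, e \in \O_K$ and $d, f \in \Z$ (cf.\ \Cref{propGenPol}), so
\[
    2M_G = \begin{pmatrix} 2a & b & d & e \\ b & 2c & \tau(e) & f \\ d & \tau(e) & 2\tau(a) & \tau(b) \\ e & f & \tau(b) & 2\tau(c)\end{pmatrix}.
\]

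Next I would expand $\det(2M_G)$ by the Leibniz formula over $S_4$, pairing each permutation $\sigma$ with $\sigma^{-1}$: because $2M_G$ is symmetric the two corresponding terms are equal, so the non-involutions contribute an even multiple of something. Every term indexed by the identity or by a transposition contains at least two of the even diagonal entries $2a, 2c, 2\tau(a), 2\tau(c)$, hence is divisible by $4$; every conjugate pair of $3$-cycles contributes twice a term containing a (single) even diagonal entry, again divisible by $4$. What survives modulo $4$ are the three double transpositions, contributing $\Nm(b)^2 + (df)^2 + \Nm(e)^2$, and the three conjugate pairs of $4$-cycles, contributing $-2\bigl(\Nm(b)\Nm(e) + \Nm(b)\,df + \Nm(e)\,df\bigr)$. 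Therefore
\[
    \det(2M_G) \equiv \Nm(b)^2 + \Nm(e)^2 + (df)^2 - 2\Nm(b)\Nm(e) - 2\Nm(b)\,df - 2\Nm(e)\,df \pmod 4.
\]

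Finally, writing $X = \Nm(b)$, $Y = \Nm(e)$, $Z = df$, the right-hand side equals $(X+Y+Z)^2 - 4(XY + XZ + YZ) \equiv (X+Y+Z)^2 \pmod 4$, which lies in $\{0,1\}$ since those are exactly the squares in $\Z/4\Z$. Combined with $\det(M_Q)/D^2 = \det(2M_G)$ this proves $\det(M_Q)/D^2 \equiv 0, 1 \pmod 4$.

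The one place requiring care is the bookkeeping in the Leibniz expansion: one must correctly read off which off-diagonal entries occur in each $4$-cycle so that they regroup into the three norm products $\Nm(b)\Nm(e)$, $\Nm(b)\,df$, $\Nm(e)\,df$ with the correct sign, and apply the $\sigma \leftrightarrow \sigma^{-1}$ pairing only to the non-involutions (the identity, the six transpositions, and the three double transpositions being their own inverses). A quick consistency check against $G_2, \dots, G_{10}$ from \Crefrange{lemmaExD2}{lemmaExD10}, where $\det(M_Q)/D^2 = 1$ in every case, confirms the computation.
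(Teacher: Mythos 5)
Your proposal is correct and follows essentially the same route as the paper: reduce via \Cref{lemmaDetMQ} to showing $\det(2M_G)\equiv 0,1\pmod 4$ and identify the surviving terms as $(\Nm(b)+\Nm(e)+df)^2$ modulo $4$. The only difference is organizational — the paper grinds through a cofactor expansion along the first column, whereas your Leibniz expansion with the $\sigma\leftrightarrow\sigma^{-1}$ pairing isolates the double transpositions and $4$-cycles more transparently — but the key identity and conclusion are identical, and your bookkeeping (signs, which entries occur in each cycle, and the parity count of even diagonal entries) checks out.
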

\begin{proof}
By \Cref{lemmaDetMQ},
\[
    \det(M_Q) = D^2\cdot\det(2M_G).
\]
The matrix $ 2M_G $ is
\[
    2M_G = \begin{pmatrix}
        2a&b&d&e\\
        b&2c&\tau(e)&f\\
        d&\tau(e)&2\tau(a)&\tau(b)\\
        e&f&\tau(b)&2\tau(c)
    \end{pmatrix}
\]
where $ a, b, c, e \in \O_K $ and $ d, f \in \Z $ because $ G $ is integral. Expanding $ \det(2M_G) $ along the first column, we get
\begin{align*}
    \det(2M_G) =& 2a\cdot \det\begin{pmatrix}
        2c&\tau(e)&f\\
        \tau(e)&2\tau(a)&\tau(b)\\
        f&\tau(b)&2\tau(c)
    \end{pmatrix}-b\cdot\det\begin{pmatrix}
        b&d&e\\
        \tau(e)&2\tau(a)&\tau(b)\\
        f&\tau(b)&2\tau(c)
    \end{pmatrix}+\\
    &+d\cdot\det\begin{pmatrix}
        b&d&e\\
        2c&\tau(e)&f\\
        f&\tau(b)&2\tau(c)
    \end{pmatrix}-e\cdot\det\begin{pmatrix}
        b&d&e\\
        2c&\tau(e)&f\\
        \tau(e)&2\tau(a)&\tau(b)
    \end{pmatrix},
\end{align*}
hence
\begin{align*}
    \det(2M_G) \equiv & 2a\Big(\tau(e)\tau(b)f+f\tau(e)\tau(b)\Big)-b\Big(d\tau(b)f+e\tau(e)\tau(b)-2f\tau(a)e-\tau(b)^2b-2\tau(c)\tau(e)d\Big)\\
    &+d\Big(2b\tau(e)\tau(c)+df^2+2ec\tau(b)-f\tau(e)e-\tau(b)fb\Big)\\
    &-e\Big(b\tau(e)\tau(b)+df\tau(e)-\tau(e)^2e-2\tau(a)fb-2\tau(b)cd\Big)\\
    \equiv& -\Nm(b)df-\Nm(b)\Nm(e)+2\tau(a)bef+\Nm(b)^2+2b\tau(c)d\tau(e)\\
    &+2b\tau(c)d\tau(e)+d^2f^2+2\tau(b)cde-d\Nm(e)f-\Nm(b)df\\
    &-\Nm(b)\Nm(e)-d\Nm(e)f+\Nm(e)^2+2\tau(a)bef+2\tau(b)cde\\
    \equiv &-2\Nm(b)df-2\Nm(b)\Nm(e)+\Nm(b)^2+d^2f^2-2d\Nm(e)f+\Nm(e)^2\\
    \equiv & \left(\Nm(b)+\Nm(e)+df\right)^2 \pmod{4}.
\end{align*}
Thus,
\[
    \frac{\det(M_Q)}{D^2} = \det(2M_G) \equiv 0, 1 \pmod{4}.\qedhere
\]
\end{proof}

\section{Indefinite generalized quadratic forms}
\label{secIndef}

We end with a short section providing a shortened proof of the universality of $ G(z, w) = z\tau(z) +w\tau(w) $ over real quadratic fields, where the form is indefinite. For a more detailed approach, please consult \Cref{secIndef2} of the Appendix.

\begin{theorem}
\label{thmIndef}
Let $ K = \Q(\sqrt{D}) $ where $ D \in \Z_{\geq 2} $ is squarefree. The binary generalized form
\[
    G(z, w) = z\tau(z)+w\tau(w)
\]
represents every $ a \in \Z $.
\end{theorem}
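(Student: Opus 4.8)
The plan is to reduce the statement to the local--global principle for the explicit indefinite quaternary form $x^2+y^2-Dz^2-Dw^2$, whose local representability has already been established in \Cref{lemmaPOdd,lemmaP2}.

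First I would note that $G(z,w)=z\tau(z)+w\tau(w)=\Nm(z)+\Nm(w)$. Since $\sqrt D\in\O_K$ (it is an algebraic integer), we have $\Z[\sqrt D]\subseteq\O_K$, so to represent a given $a\in\Z$ it suffices to find $x_1,y_1,x_2,y_2\in\Z$ and set $z=x_1+y_1\sqrt D$, $w=x_2+y_2\sqrt D$, for which
\[
	G(z,w)=(x_1^2-Dy_1^2)+(x_2^2-Dy_2^2).
\]
Up to renaming the variables, the right-hand side is exactly the quadratic form $x^2+y^2-Dz^2-Dw^2$ over $\Z$. (When $D\equiv 2,3\pmod 4$ this is the quadratic form associated to $G$; when $D\equiv 1\pmod 4$ it is not, but that is irrelevant, since we only need \emph{one} representation with entries in $\O_K$, and restricting to $\Z[\sqrt D]$ avoids the non-diagonal form one would get from the basis $(1,\omega_D)$.) Hence it is enough to show that $q(x,y,z,w)=x^2+y^2-Dz^2-Dw^2$ represents every $a\in\Z$ over $\Z$.

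For $a=0$ this is immediate. For $a\neq 0$, the form $q$ is integral, has $4$ variables, is regular (its Gram matrix is $\mathrm{diag}(1,1,-D,-D)$, of determinant $D^2\neq 0$), and is indefinite (it represents $1$ and $-D$, and $D\geq 2$). By \Cref{lemmaPOdd} it represents $a$ over $\Z_p$ for every odd prime $p$, and by \Cref{lemmaP2} it represents $a$ over $\Z_2$. The local--global principle \Cref{thmLG} then yields a representation of $a$ by $q$ over $\Z$, completing the argument.

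The genuinely technical content here is the odd-$p$ and $2$-adic lifting, which is already packaged into \Cref{lemmaPOdd,lemmaP2}; so the main obstacle, were those not available, would be the $2$-adic case analysis. With them in hand, the only remaining points are to verify the hypotheses of \Cref{thmLG} (regularity and indefiniteness of $q$) and to make the reduction to $\Z[\sqrt D]$, which diagonalizes the relevant form uniformly in the residue of $D$ modulo $4$.
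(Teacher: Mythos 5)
Your proposal is correct and follows essentially the same route as the paper: restrict to $\Z[\sqrt{D}]\subseteq\O_K$ so that $G$ becomes the diagonal form $x^2+y^2-Dz^2-Dw^2$, invoke \Cref{lemmaPOdd,lemmaP2} for local representability, and conclude by \Cref{thmLG}. Your additional checks (the $a=0$ case, regularity and indefiniteness of the form, and the remark about avoiding $\omega_D$ when $D\equiv 1\pmod 4$) are all consistent with, and slightly more explicit than, the paper's argument.
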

\begin{proof}
It is sufficient to show that $ G $ represents every $ a \in \Z\setminus\{0\} $ over $ \Z[\sqrt{D}] $ because $ \Z[\sqrt{D}] \subset \O_K $.

Let $ z = x_1+y_1\sqrt{D} $ and $ w = x_2+y_2\sqrt{D} $ where $ x_1, y_1, x_2, y_2 \in \Z $. Let
\[
    Q(x_1, y_1, x_2, y_2) = G(x_1+y_1\sqrt{D}, x_2+y_2\sqrt{D}).
\]
We have
\[
    Q(x_1, y_1, x_2, y_2) = (x_1+y_1\sqrt{D})(x_1-y_1\sqrt{D})+(x_2+y_2\sqrt{D})(x_2-y_2\sqrt{D}) = x_1^2+x_2^2-Dy_1^2-Dy_2^2.
\]

By the local-global principle for representations over $\Z$ by indefinite forms (see, e.g., \cite[Chap. 9, Thm. 1.5]{Ca}), it suffices to verify that each $a\in\Z$ is represented over $\Z_p$ for all primes $p$. This is indeed straightforward to verify; for details see Section \ref{secIndef2} of the Appendix.\end{proof}

It is natural to be interested in the ways a quadratic form represents 0 non-trivially. In Proposition \ref{propRepZero}, we show that the quadratic form $Q(x,y,z,w)=x^2+y^2-Dz^2-Dw^2$ represents 0 non-trivially if and only if $D$ is a sum of two squares. The proof of this proposition can be found in the appendix.

\newcommand{\sectionletter}{A}
\renewcommand{\thesection}{\sectionletter\arabic{section}}
\newpage
\renewcommand{\sectionletter}{A}

\setcounter{section}{3}
\section{Proofs of \Crefrange{lemmaExD3}{lemmaExD10}}
\label{SectionFillingInPRoofs55-59}

\setcounter{theorem}{4} 
\begin{lemma}
Let $ K = \Q(\sqrt{3}) $ and
\[
    G_3(z, w) = 2z^2-2\sqrt{3}zw+2w^2-3z\tau(z)-\sqrt{3}z\tau(w)+\sqrt{3}w\tau(z)+2\tau(z)^2+2\sqrt{3}\tau(z)\tau(w)+2\tau(w)^2.
\]
The generalized form $ G_3 $ is $ \Z $-universal over $ K $.
\end{lemma}
\begin{proof}
The coefficients of $ G_3 $ are $ a = 2 $, $ b = -2\sqrt{3} $, $ c = 2 $, $ d = -3 $, $ e = -\sqrt{3} $, $ f = 0 $. Let $ Q_3 $ be the quaternary quadratic form $ Q_3(x_1, y_1, x_2, y_2) = G_3(x_1+y_1\sqrt{3}, x_2+y_2\sqrt{3}) $. By \Cref{lemmaMQBin} with $ D = 3 $,
\[
    M_{Q_3} = \begin{pmatrix}
        2a_1+d&6a_2&b_1+e_1&3(b_2-e_2)\\
        6a_2&3(2a_1-d)&3(b_2+e_2)&3(b_1-e_1)\\
        b_1+e_1&3(b_2-e_2)&2c_1+f&6c_2\\
        3(b_2-e_2)&3(b_1-e_1)&6c_2&3(2c_1-f)
    \end{pmatrix} = \begin{pmatrix}
        1&0&0&-3\\
        0&21&-9&0\\
        0&-9&4&0\\
        -3&0&0&12
    \end{pmatrix}.
\]
We get
\[
    M_{Q_3} \sim \begin{pmatrix}
        1&0&0&0\\
        0&21&-9&0\\
        0&-9&4&0\\
        0&0&0&3
    \end{pmatrix} \sim \begin{pmatrix}
        1&0&0&0\\
        0&1&-1&0\\
        0&-1&4&0\\
        0&0&0&3
    \end{pmatrix} \sim \begin{pmatrix}
        1&0&0&0\\
        0&1&0&0\\
        0&0&3&0\\
        0&0&0&3
    \end{pmatrix},
\]
The diagonal quadratic form $ x^2+y^2+3z^2+3w^2 $ is positive definite and universal by the 15-Theorem, hence $ G_3 $ is also positive definite and $ \Z $-universal.
\end{proof}

\begin{lemma}

Let $ K = \Q(\sqrt{5}) $ and
\[
    G_5(z, w) = (3-\omega_5)z^2+w^2-4z\tau(z)+(3-\tau(\omega_5))\tau(z)^2+\tau(w)^2
\]
where $ \omega_5 = \frac{1+\sqrt{5}}{2} $. The generalized form $ G_5 $ is classical and $ \Z $-universal over $ K $.
\end{lemma}
\begin{proof}
The coefficients of $ G_5 $ are $ a = 3-\omega_5 $, $ b = 0 $, $ c = 1 $, $ d = -4 $, $ e = 0 $, and $ f = 0 $. Since the coefficients of the off-diagonal terms are even, $ G_5 $ is classical. Let $ Q_5 $ be the quaternary quadratic form $ Q_5(x_1, y_1, x_2, y_2) = G_5(x_1+y_1\omega_5, x_2+y_2\omega_5) $. By \Cref{lemmaMQBin} with $ D = 5 $,
\[
    M_{Q_5} = \begin{pmatrix}
        M_{Q_5, 1}&M_{Q_5, 2}\\
        M_{Q_5, 2}^\top&M_{Q_5, 3}
    \end{pmatrix}
\]
where
\begin{align*}
    M_{Q_5, 1}& = \begin{pmatrix}
        2a_1+a_2+d&a_1+3a_2+\frac{d}{2}\\
        a_1+3a_2+\frac{d}{2}&3a_1+4a_2-d
    \end{pmatrix} = \begin{pmatrix}
        1&-2\\
        -2&9
    \end{pmatrix},\\
    M_{Q_5, 2}& = \begin{pmatrix}
        b_1+\frac{b_2}{2}+e_1+\frac{e_2}{2}&\frac{b_1}{2}+\frac{3}{2}b_2+\frac{e_1}{2}-e_2\\
        \frac{b_1}{2}+\frac{3}{2}b_2+\frac{e_1}{2}+\frac{3}{2}e_2&\frac{3}{2}b_1+2b_2-e_1-\frac{1}{2}e_2
    \end{pmatrix} = \begin{pmatrix}
        0&0\\
        0&0
    \end{pmatrix},\\
    M_{Q_5, 3}& = \begin{pmatrix}
        2c_1+c_2+f&c_1+3c_2+\frac{f}{2}\\
        c_1+3c_2+\frac{f}{2}&3c_1+4c_2-f
    \end{pmatrix} = \begin{pmatrix}
        2&1\\
        1&3
    \end{pmatrix},
\end{align*}
thus
\[
    M_{Q_5} = \begin{pmatrix}
        1&-2&0&0\\
        -2&9&0&0\\
        0&0&2&1\\
        0&0&1&3
    \end{pmatrix} \sim \begin{pmatrix}
        1&0&0&0\\
        0&5&0&0\\
        0&0&2&1\\
        0&0&1&3
    \end{pmatrix}.
\]
This quadratic form is positive definite and universal by the 15-Theorem. Alternatively, it is possible to consult \cite[Table~5]{Bh}, which contains the equivalent form
\[
    \begin{pmatrix}
        1&0&0&0\\
        0&2&1&0\\
        0&1&3&0\\
        0&0&0&5
    \end{pmatrix}
\]
encoded as ``$ 25:\ 2\ 3\ 5\ 0\ 0\ 2 $".
\end{proof}

\begin{lemma}
Let $ K = \Q(\sqrt{6}) $ and $ G_6 $ be the generalized form with the matrix
\[
    M_{G_6} = \begin{pmatrix}
        3&\sqrt{6}&-\frac{5}{2}&\frac{\sqrt{6}}{2}\\
        \sqrt{6}&3&-\frac{\sqrt{6}}{2}&-\frac{1}{2}\\
        -\frac{5}{2}&-\frac{\sqrt{6}}{2}&3&-\sqrt{6}\\
        \frac{\sqrt{6}}{2}&-\frac{1}{2}&-\sqrt{6}&3
    \end{pmatrix}.
\]
The generalized form $ G_6 $ is $ \Z $-universal over $ K $.
\end{lemma}
\begin{proof}
The coefficients of $ G_6 $ are $ a = 3 $, $ b = 2\sqrt{6} $, $ c = 3 $, $ d = -5 $, $ e = \sqrt{6} $, and $ f = -1 $. Let $ Q_6 $ be the quaternary quadratic form $ M_{Q_6}(x_1, y_1, x_2, y_2) = G_6(x_1+y_1\sqrt{6}, x_2+y_2\sqrt{6}) $. By \Cref{lemmaMQBin} with $ D = 6 $,
\[
    M_{Q_6} = \begin{pmatrix}
        2a_1+d&12a_2&b_1+e_1&6(b_2-e_2)\\
        12a_2&6(2a_1-d)&6(b_2+e_2)&6(b_1-e_1)\\
        b_1+e_1&6(b_2+e_2)&2c_1+f&12c_2\\
        6(b_2-e_2)&6(b_1-e_1)&12c_2&6(2c_1-f)
    \end{pmatrix} = \begin{pmatrix}
        1&0&0&6\\
        0&66&18&0\\
        0&18&5&0\\
        6&0&0&42
    \end{pmatrix}.
\]
We get
\[
    M_{Q_6} \sim \begin{pmatrix}
        1&0&0&0\\
        0&66&18&0\\
        0&18&5&0\\
        0&0&0&6
    \end{pmatrix} \sim \begin{pmatrix}
        1&0&0&0\\
        0&3&3&0\\
        0&3&5&0\\
        0&0&0&6
    \end{pmatrix} \sim \begin{pmatrix}
        1&0&0&0\\
        0&3&0&0\\
        0&0&2&0\\
        0&0&0&6
    \end{pmatrix}.
\]
The last quadratic form is positive definite and universal by the 15-Theorem.
\end{proof}

\begin{lemma}
Let $ K = \Q(\sqrt{7}) $ and $ G_7 $ be the generalized quadratic form with the matrix
\[
    M_{G_7} = \begin{pmatrix}
        3&\frac{1+2\sqrt{7}}{2}&-\frac{5}{2}&\frac{-1+2\sqrt{7}}{2}\\
        \frac{1+2\sqrt{7}}{2}&4&\frac{-1-2\sqrt{7}}{2}&\frac{3}{2}\\
        -\frac{5}{2}&\frac{-1-2\sqrt{7}}{2}&3&\frac{1-2\sqrt{7}}{2}\\
        \frac{-1+2\sqrt{7}}{2}&\frac{3}{2}&\frac{1-2\sqrt{7}}{2}&4
    \end{pmatrix}.
\]
The generalized form $ G_7 $ is $ \Z $-universal over $ K $.
\end{lemma}
\begin{proof}
The coefficients of $ G_7 $ are $ a = 3 $, $ b = 1+2\sqrt{7} $, $ c = 4 $, $ d = -5 $, $ e = -1+2\sqrt{7} $, and $ f = 3 $. By \Cref{lemmaMQBin} with $ D = 7 $,
\[
    M_{Q_7} = \begin{pmatrix}
        2a_1+d&14a_2&b_1+e_1&7(b_2-e_2)\\
        14a_2&7(2a_1-d)&7(b_2+e_2)&7(b_1-e_1)\\
        b_1+e_1&7(b_2+e_2)&2c_1+f&14c_2\\
        7(b_2-e_2)&7(b_1-e_1)&14c_2&7(2c_1-f)
    \end{pmatrix} = \begin{pmatrix}
        1&0&0&0\\
        0&77&28&14\\
        0&28&11&0\\
        0&14&0&35
    \end{pmatrix}.
\]
Let
\[
    A_7 = \begin{pmatrix}
        1&0&0&0\\
        0&2&-3&3\\
        0&-5&8&-7\\
        0&-1&1&-1
    \end{pmatrix}.
\]
It can be verified that $ \det(A_7) = 1 $, hence $ A_7 \in \SL_4(\Z) $. The equivalent form with the matrix
\[
    A_7^\top M_{Q_7} A_7 = \begin{pmatrix}
        1&0&0&0\\
        0&2&-3&3\\
        0&-5&8&-7\\
        0&-1&1&-1
    \end{pmatrix}^\top \begin{pmatrix}
        1&0&0&0\\
        0&77&28&14\\
        0&28&11&0\\
        0&14&0&35
    \end{pmatrix}\begin{pmatrix}
        1&0&0&0\\
        0&2&-3&3\\
        0&-5&8&-7\\
        0&-1&1&-1
    \end{pmatrix} = \begin{pmatrix}
        1&0&0&0\\
        0&2&1&0\\
        0&1&4&0\\
        0&0&0&7
    \end{pmatrix}
\]
is positive definite and universal by the 15-Theorem. Alternatively, it can be found in \cite[Table 5]{Bh} as ``49:\ 2\ 4\ 7\ 0\ 0\ 2".
\end{proof}

\begin{lemma}
Let $ K = \Q(\sqrt{10}) $. If $ G_{10} $ is the generalized quadratic form with the matrix
\[
    M_{G_{10}} = \begin{pmatrix}
        3&\frac{\sqrt{10}}{2}&-\frac{5}{2}&\frac{\sqrt{10}}{2}\\
        \frac{\sqrt{10}}{2}&5&-\frac{\sqrt{10}}{2}&\frac{7}{2}\\
        -\frac{5}{2}&-\frac{\sqrt{10}}{2}&3&\frac{-\sqrt{10}}{2}\\
        \frac{\sqrt{10}}{2}&\frac{7}{2}&\frac{\sqrt{10}}{2}&5
    \end{pmatrix},
\]
then $ G_{10} $ is $ \Z $-universal over $ K $.
\end{lemma}
\begin{proof}
The coefficients of $ G_{10} $ are $ a = 3 $, $ b = \sqrt{10} $, $ c = 5 $, $ d = -5 $, $ e = \sqrt{10} $, and $ f = 7 $. Let $ Q_{10} $ be the quaternary quadratic form $ Q_{10}(x_1, y_1, x_2, y_2) = G_{10}(x_1+y_1\sqrt{10}, x_2+y_2\sqrt{10}) $. By \Cref{lemmaMQBin} with $ D = 10 $,
\[
    M_{Q_{10}} = \begin{pmatrix}
        2a_1+d&20a_2&b_1+e_1&10(b_2-e_2)\\
        20a_2&10(2a_1-d)&10(b_2+e_2)&10(b_1-e_1)\\
        b_1+e_1&10(b_2+e_2)&2c_1+f&20c_2\\
        10(b_2-e_2)&10(b_1-e_1)&20c_2&10(2c_1-f)
    \end{pmatrix} = \begin{pmatrix}
        1&0&0&0\\
        0&110&20&0\\
        0&20&17&20\\
        0&0&20&30
    \end{pmatrix}.
\]
Let
\[
    A_{10} = \begin{pmatrix}
        1&0&0&0\\
        0&1&1&2\\
        0&-6&-5&-10\\
        0&4&3&7
    \end{pmatrix}.
\]
It can be verified that $ \det(A_{10}) = 1 $, hence $ A \in \SL_4(\Z) $. The equivalent form with the matrix
\[
    A_{10}^\top M_{Q_{10}}A_{10} = \begin{pmatrix}
        1&0&0&0\\
        0&1&1&2\\
        0&-6&-5&-10\\
        0&4&3&7
    \end{pmatrix}^\top \begin{pmatrix}
        1&0&0&0\\
        0&110&20&0\\
        0&20&17&20\\
        0&0&20&30
    \end{pmatrix} \begin{pmatrix}
        1&0&0&0\\
        0&1&1&2\\
        0&-6&-5&-10\\
        0&4&3&7
    \end{pmatrix} = \begin{pmatrix}
        1&0&0&0\\
        0&2&0&0\\
        0&0&5&0\\
        0&0&0&10
    \end{pmatrix}
\]
is positive definite and universal by the 15-Theorem.
\end{proof}

\section{Indefinite generalized quadratic forms}
\label{secIndef2}

This section elaborates on the proof of the universality of $ G(z, w) = z\tau(z) +w\tau(w) $ given in \Cref{secIndef} as \Cref{thmIndef}.

One of the classical methods for studying representations by quadratic forms involves using the \emph{local-global principle.} We will use the following version of the local-global principle for indefinite forms. If $ p $ is a prime number, then $ \Z_p $ denotes the ring of $ p $-adic integers.

\begin{theorem}[{\cite[Chap. 9, Thm. 1.5]{Ca}}]
\label{thmLG}
Let $ Q $ be a regular indefinite integral form in $ n \geq 4 $ variables and let $ a \neq 0 $ be an integer. Suppose that $ a $ is represented by $ Q $ over all $ \Z_p $. Then $ a $ is represented by $ Q $ over $ \Z $.
\end{theorem}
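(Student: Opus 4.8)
This is the classical local--global (Hasse) principle for indefinite integral forms, so rather than a new argument the plan is to recall the standard proof, which passes from $Q$ itself up to its genus and then back down through the spinor genus of $Q$.

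First I would upgrade the hypothesis to a rational representation. A $\Z_p$-representation of $a$ is in particular a $\Q_p$-representation for every finite prime $p$, and since $Q$ is indefinite it represents every nonzero real number, hence $a$ over $\R$; by the Hasse--Minkowski theorem $a$ is then represented by $Q$ over $\Q$. Combining this rational solution with the given integral solutions at all finite primes, the characterization of genus-representability shows that $a$ is represented by the genus of $Q$, i.e.\ by some integral form in $\mathrm{gen}(Q)$ (equivalently, $\langle a\rangle$ embeds isometrically into some lattice in the genus of the lattice attached to $Q$).

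Next I would descend from the genus to the spinor genus, and this is the step that uses $n \geq 4$: for indefinite forms in at least four variables there are no spinor exceptional square classes, so a number represented by the genus is represented by \emph{every} spinor genus inside that genus, in particular by the spinor genus of $Q$. Finally, Eichler's strong approximation theorem for the spin group shows that an indefinite lattice of rank $n \geq 3$ contains a single class in each of its spinor genera; hence the spinor genus of $Q$ consists only of the class of $Q$, and the representation of $a$ by that spinor genus is an honest representation of $a$ by $Q$ over $\Z$.

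The technical heart is the genus-to-spinor-genus step: one must rule out spinor exceptions, which comes down to computing the relative spinor norm groups $\theta(L_p,a)$ at each prime $p$ and checking that, once $n \geq 4$, their collective image already exhausts the idele class group that indexes the spinor genera in the genus of $Q$. For $n = 3$ this can fail and finitely many exceptional square classes appear, which is precisely why the hypothesis $n \geq 4$ (rather than $n \geq 3$) is imposed. Since \Cref{thmLG} is invoked only as a black box in what follows, in the paper I would simply cite \cite[Chap.~9, Thm.~1.5]{Ca} (together with the supporting material on spinor genera therein) rather than reproduce this argument.
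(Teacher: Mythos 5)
The paper offers no proof of this statement: it is quoted directly from Cassels \cite[Chap.~9, Thm.~1.5]{Ca} and used as a black box in \Cref{secIndef}, exactly as you propose to do. Your sketch of the underlying argument (Hasse--Minkowski to get a rational representation, genus representability from the local integral data, absence of spinor exceptions for $n\geq 4$, and Eichler's theorem that an indefinite spinor genus of rank $\geq 3$ consists of a single class) is the standard and correct route, so there is nothing to reconcile with the paper beyond the citation itself.
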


Recall that by our definition, every quadratic form $ Q $ represents $ 0 $ (trivially). To represent nonzero $ a $, we initially find the representation modulo $ p $, followed by lifting it to a representation over $ \mathbb{Z}_p $ using the following corollaries of Hensel's lemma. The tuple $ (x_1, x_2, \dots, x_n) \in \Z_p^n $ is called \emph{primitive} if at least one of the $ x_i $ is invertible in $ \Z_p $.

\begin{lemma}[{\cite[Chap. 2, Cor. 2]{Se}}]
\label{lemmaLift1}
Suppose $ p \neq 2 $. Let $ Q(x) = \sum a_{ij}x_ix_j $ with $ a_{ij} = a_{ji} $ be a quadratic form with coefficients in $ \Z_p $ whose determinant $ \det(a_{ij}) $ is invertible. Let $ a \in \Z_p $. Every primitive solution of the equation $ Q(x) \equiv a \pmod{p} $ lifts to a true solution.
\end{lemma}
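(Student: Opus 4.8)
The plan is to deduce the statement from the one-variable version of Hensel's lemma. Write $ A = (a_{ij}) $ for the symmetric coefficient matrix, so that $ Q(\mathbf{x}) = \mathbf{x}^\top A \mathbf{x} $ and $ \nabla Q(\mathbf{x}) = 2A\mathbf{x} $.

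First I would fix a primitive solution $ \bar{\mathbf{x}} = (\bar{x}_1, \dots, \bar{x}_n) \in \Z_p^n $ of $ Q(\mathbf{x}) \equiv a \pmod{p} $, so that not all $ \bar{x}_i $ lie in $ p\Z_p $. Since $ \det A $ is a $ p $-adic unit, the reduction of $ A $ modulo $ p $ is invertible, and hence $ A\bar{\mathbf{x}} \not\equiv 0 \pmod{p} $ --- otherwise $ \bar{\mathbf{x}} \equiv 0 \pmod p $, contradicting primitivity. Because $ p \neq 2 $, the element $ 2 $ is a unit in $ \Z_p $, so $ \nabla Q(\bar{\mathbf{x}}) = 2A\bar{\mathbf{x}} \not\equiv 0 \pmod{p} $; I would then choose an index $ k $ for which $ \frac{\partial Q}{\partial x_k}(\bar{\mathbf{x}}) $ is a $ p $-adic unit.

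Next I would freeze the remaining coordinates and set $ g(t) = Q(\bar{x}_1, \dots, \bar{x}_{k-1}, t, \bar{x}_{k+1}, \dots, \bar{x}_n) - a \in \Z_p[t] $. Then $ g(\bar{x}_k) \equiv 0 \pmod{p} $ while $ g'(\bar{x}_k) = \frac{\partial Q}{\partial x_k}(\bar{\mathbf{x}}) $ is a unit, so Hensel's lemma provides a (unique) $ t_0 \in \Z_p $ with $ t_0 \equiv \bar{x}_k \pmod{p} $ and $ g(t_0) = 0 $. Replacing the $ k $-th coordinate of $ \bar{\mathbf{x}} $ by $ t_0 $ then yields a genuine solution of $ Q(\mathbf{x}) = a $ over $ \Z_p $ which reduces to $ \bar{\mathbf{x}} $ modulo $ p $.

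There is essentially no obstacle here: the only point to watch is the use of $ p \neq 2 $, which is exactly what keeps $ 2 $ --- and hence the whole gradient $ 2A\bar{\mathbf{x}} $ --- nonzero modulo $ p $, so that some partial derivative is a unit and the simple form of Hensel's lemma applies. For $ p = 2 $ one would instead have to lift from a higher power of $ 2 $, and the statement in the form given would no longer hold.
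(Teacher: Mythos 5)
Your argument is correct and is essentially the standard proof of this result: the paper itself gives no proof but cites \cite[Chap.~2, Cor.~2]{Se}, where the same reasoning appears (invertibility of $\det(a_{ij})$ plus primitivity of $\bar{\mathbf{x}}$ and $p\neq 2$ force some partial derivative $2\sum_j a_{kj}\bar{x}_j$ to be a unit, after which one lifts by Hensel's lemma). Your reduction to the one-variable Hensel lemma by freezing the other coordinates is a valid and equivalent way of invoking the multivariate version used in the reference.
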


\begin{lemma}[{\cite[Chap. 2, Cor. 3]{Se}}]
\label{lemmaLift2}
Suppose $ p = 2 $. Let $ Q(x) = \sum a_{ij}x_ix_j $ with $ a_{ij} = a_{ji} $ be a quadratic form with coefficients in $ \Z_2 $ and let $ a \in \Z_2 $. Let $ x $ be a primitive solution of $ Q(x) \equiv a \pmod{8} $. We can lift $ x $ to a true solution provided $ x $ does not annihilate all the $ \frac{\partial Q}{\partial x_j} $ modulo $ 4 $; this last condition is fulfilled if $ \det(a_{ij}) $ is invertible.
\end{lemma}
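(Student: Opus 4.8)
The plan is to reduce the multivariate lifting to a single-variable Newton iteration in the direction of the coordinate whose partial derivative is assumed to be ``large''. Write $ g(x) = Q(x)-a $, so that $ \frac{\partial g}{\partial x_j} = \frac{\partial Q}{\partial x_j} = 2\sum_i a_{ij}x_i $. By hypothesis there is an index $ k $ with $ \frac{\partial Q}{\partial x_k}(x) \not\equiv 0 \pmod 4 $, i.e. $ v_2\!\left(\frac{\partial Q}{\partial x_k}(x)\right) \leq 1 $, where $ v_2 $ is the $ 2 $-adic valuation. Freeze all coordinates of $ x $ except the $ k $-th and set $ h(t) = Q(x+te_k)-a $, where $ e_k $ is the $ k $-th standard basis vector. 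Since $ Q $ is quadratic, $ h $ is the explicit quadratic polynomial
\[
    h(t) = a_{kk}t^2+b\,t+c,\quad b = \frac{\partial Q}{\partial x_k}(x),\quad c = Q(x)-a,
\]
with $ v_2(b) = j \leq 1 $ and $ v_2(c) \geq 3 $ because $ Q(x) \equiv a \pmod 8 $. Solving $ h(t) = 0 $ over $ \Z_2 $ and putting $ y = x+te_k $ then yields $ Q(y) = a $.

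Next I would run the Newton iteration $ t_0 = 0 $, $ t_{m+1} = t_m - h(t_m)/h'(t_m) $, and track valuations. Since $ h $ is quadratic, Taylor expansion at $ t_m $ gives the \emph{exact} identity $ h(t_{m+1}) = a_{kk}\,h(t_m)^2/h'(t_m)^2 $, so if $ v_m = v_2(h(t_m)) $ then $ v_{m+1} \geq 2v_m - 2j $. One checks inductively that $ v_2(h'(t_m)) = j $ is preserved: from $ h'(t) = 2a_{kk}t+b $ one has $ h'(t_{m+1}) = h'(t_m) - 2a_{kk}h(t_m)/h'(t_m) $, and the correction term has valuation $ \geq 1 + v_m - j > j $ whenever $ v_m > 2j-1 $, which holds since $ v_m \geq 3 $ and $ 2j-1 \leq 1 $. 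Writing $ w_m = v_m - 2j \geq 1 $, the recursion reads $ w_{m+1} \geq 2w_m $, so $ v_m \to \infty $; simultaneously $ v_2(t_{m+1}-t_m) = v_m - j \to \infty $, so $ (t_m) $ is Cauchy in $ \Z_2 $ and converges to a root $ t_\ast $ of $ h $. This is the genuinely $ 2 $-adic point: the factor $ 2 $ in $ h'(t) $ forces $ j $ to be as large as $ 1 $, so the estimate only closes if $ v_0 > 2j $, which is exactly why the hypothesis demands $ Q(x) \equiv a \pmod 8 $ rather than merely modulo a lower power of $ 2 $.

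Finally I would verify the closing assertion that the derivative condition is automatic when $ \det(a_{ij}) $ is a unit in $ \Z_2 $; here primitivity of $ x $ enters. If, on the contrary, $ \frac{\partial Q}{\partial x_j}(x) = 2\sum_i a_{ij}x_i \equiv 0 \pmod 4 $ for every $ j $, then $ \sum_i a_{ij}x_i \equiv 0 \pmod 2 $ for every $ j $, i.e. $ A\bar{x} = 0 $ over $ \Z/2\Z $, where $ A = (a_{ij}) $ and $ \bar{x} $ is the reduction of $ x $. Since $ \det A $ is odd, $ A $ is invertible modulo $ 2 $, which forces $ \bar{x} = 0 $, i.e. every $ x_i \equiv 0 \pmod 2 $, contradicting primitivity. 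Hence some partial derivative is nonzero modulo $ 4 $, so the lifting hypothesis is met.

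The main obstacle is the valuation bookkeeping of the second paragraph. At $ p = 2 $ the derivative of a quadratic form carries an unavoidable factor of $ 2 $, so the usual Hensel inequality $ v_2(h(t_0)) > 2\,v_2(h'(t_0)) $ only barely holds, and one must check simultaneously that the quadratic Newton step \emph{doubles} the shifted valuation and that $ v_2(h'(t_m)) $ does not drift downward during the iteration. Once both facts are in place the convergence and the reduction are routine.
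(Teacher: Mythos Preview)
The paper does not give its own proof of this lemma; it is quoted verbatim from Serre's \emph{A Course in Arithmetic} and used as a black box. Your argument is correct and is in fact the standard Hensel/Newton approach that underlies Serre's proof: one freezes all coordinates but one, reduces to a single-variable quadratic, and runs Newton iteration while tracking that the valuation of the derivative is preserved and the valuation of the residual doubles (after the shift by $2j$). The reduction of the determinant condition to the derivative condition via invertibility of $A$ modulo $2$ is also exactly the intended argument.
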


We proceed with proving that $ x^2+y^2-Dz^2-Dw^2 $ represents every $ a \in \Z $.
\begin{lemma}
\label{lemmaPOdd}
Let $ p \neq 2 $ be a prime, $ D \in \Z $ squarefree, and $ a \in \Z_p $. The quadratic form $ x^2+y^2-Dz^2-Dw^2 $ represents $ a $ over $ \Z_p $.
\end{lemma}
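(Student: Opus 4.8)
The plan is to split into two cases according to whether $p \mid D$, reducing both to the single elementary input that for an odd prime $p$ the congruence $x^2+y^2\equiv c\pmod p$ is solvable for every $c$ (the sets $\{x^2\bmod p\}$ and $\{c-y^2\bmod p\}$ each have $(p+1)/2$ residues, so they must meet in $\Z/p\Z$). Combined with \Cref{lemmaLift1}, this yields the key auxiliary statement I would record first: the binary form $x^2+y^2$ represents every unit of $\Z_p$, since a mod-$p$ solution for a unit target is automatically primitive and $x^2+y^2$ has determinant $1$.

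If $p\nmid D$, the quaternary form $Q=x^2+y^2-Dz^2-Dw^2$ has determinant $D^2\in\Z_p^{\times}$, so it suffices to exhibit a primitive solution of $Q\equiv a\pmod p$ and invoke \Cref{lemmaLift1}. I would take $(x_0,y_0,1,0)$ with $x_0^2+y_0^2\equiv a+D\pmod p$; the coordinate $1$ guarantees primitivity, and then $Q(x_0,y_0,1,0)\equiv a\pmod p$.

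If $p\mid D$, squarefreeness lets me write $D=pD'$ with $p\nmid D'$, so $-D$ times a unimodular binary form shifts the $p$-adic valuation by exactly one. Writing a nonzero target as $a=p^e u$ with $u\in\Z_p^{\times}$, I would split on the parity of $e$. For $e$ even I would use only the block $x^2+y^2$: represent the unit $u$, then scale by $p^{e/2}$. For $e$ odd I would use only the block $-D(z^2+w^2)$: represent the unit $-u/D'$ by $z^2+w^2$, then scale by $p^{(e-1)/2}$, since $-D\,p^{e-1}(-u/D')=p^{e}u$. The case $a=0$ is the trivial representation.

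I do not anticipate a real obstacle; the one point requiring care is that when $p\mid D$ the full form $Q$ does \emph{not} have unit determinant, so one cannot lift a mod-$p$ solution of $Q$ directly via \Cref{lemmaLift1} — the reason for passing to the two-dimensional blocks (of determinant $1$) is exactly that their Hensel hypotheses hold, after which scaling by a power of $p$ adjusts the valuation without leaving $\Z_p$.
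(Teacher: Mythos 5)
Your proposal is correct and follows essentially the same route as the paper: both arguments come down to the solvability of $x^2+y^2\equiv c\pmod{p}$ for every residue $c$, a Hensel lift via \Cref{lemmaLift1}, and a case analysis on $p\mid D$ together with the parity of the $p$-adic valuation of $a$ when $p\mid D$. The one (harmless, and slightly cleaner) deviation is in the case $p\nmid D$: you apply \Cref{lemmaLift1} to the full quaternary form at the primitive point $(x_0,y_0,1,0)$, which disposes of all $a$ at once, whereas the paper still splits on the parity of the valuation there and works with the binary block $x^2+y^2$ when the valuation is even.
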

\begin{proof}
We can assume $ a \neq 0 $. First, suppose $ a = p^{2k}a_1 $ where $ k \in \Z_{\geq 0} $ and $ p \nmid a_1 $. The congruence
\[
    x^2+y^2 \equiv a_1 \pmod{p}
\]
has a solution, hence by \Cref{lemmaLift1}, there exist $ x_1, y_1 \in \Z_p $ such that
\[
    x_1^2+y_1^2 = a_1.
\]
The tuple $ (x, y, z, w) = (p^kx_1, p^ky_1, 0, 0) $ is a solution of the equation $x^2+y^2-D z^2- D w^2 = a$.

Secondly, suppose $ a = p^{2k+1}a_1 $ where $ k \in \Z_{\geq 0} $ and $ p \nmid a_1 $. We distinguish two cases.

If $ p \nmid D $, then the congruence
\[
    x^2+y^2 \equiv pa_1+D \pmod{p}
\]
has a solution, hence by \Cref{lemmaLift1}, there exist $ x_1, y_1 \in \Z_p $ such that
\[
    x_1^2+y_1^2 = pa_1+D.
\]
The tuple $ (x, y, z, w) = (p^kx_1, p^ky_1, p^k, 0) $ is a solution of the original equation because
\[
    x^2+y^2-Dz^2-Dw^2 = p^{2k}(x_1^2+z_1^2-D) = p^{2k}\cdot pa_1 = a.
\]

If $ p \mid D $, let $ D_p = \frac{D}{p} $. We have $ p \nmid D_p $ because $ D $ is squarefree. The congruence
\[
    -D_p(z^2+w^2) \equiv a_1 \pmod{p}
\]
has a solution, hence by \Cref{lemmaLift1}, there exist $ z_1, w_1 \in \Z_p $ such that
\[
    -D_p(z_1^2+w_1^2) = a_1.
\]
The tuple $ (x, y, z, w) = (0, 0, p^kz_1, p^kw_1) $ is a solution because
\[
    -Dz^2-Dw^2 = p^{2k+1}(-D_p)(z_1^2+w_1^2) = p^{2k+1}a_1 = a.\qedhere
\]
\end{proof}

\begin{lemma}
\label{lemmaP2}
Let $ D \in \Z $ be squarefree and $ a \in \Z_2 $. The quadratic form $ x^2+y^2-Dz^2-Dw^2 $ represents $ a $ over~$ \Z_2 $.
\end{lemma}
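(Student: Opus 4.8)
The plan is to check, by a finite case analysis on $D \bmod 8$, the hypotheses of Hensel's lemma at the prime $2$ in the form of \Cref{lemmaLift2}. Write $Q(x,y,z,w) = x^2+y^2-Dz^2-Dw^2$, with Gram matrix $\operatorname{diag}(1,1,-D,-D)$ of determinant $D^2$.

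First, a reduction removes all but finitely many values of $a$. We may assume $a \neq 0$ and write $a = 2^k u$ with $u \in \Z_2^\times$ and $k \geq 0$. Multiplying every variable of $Q$ by $2^{\lfloor k/2 \rfloor}$ multiplies the value by $4^{\lfloor k/2 \rfloor}$, so it suffices to show that $Q$ represents over $\Z_2$ every $2$-adic unit $u$ (this handles even $k$) and every $2u$ (this handles odd $k$). The residues modulo $8$ of such elements lie in $\{1,3,5,7\}$ and $\{2,6\}$ respectively, so the lemma follows once, for each such residue $t$, we produce a solution of $Q \equiv t \pmod 8$ to which \Cref{lemmaLift2} applies. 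Note that if $x,y,z,w$ are all even then $Q(x,y,z,w) \equiv 0 \pmod 4$; hence any solution of $Q \equiv t \pmod 8$ with $t \not\equiv 0 \pmod 4$ is automatically primitive.

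Suppose first that $D$ is odd. Then $\det = D^2$ is a $2$-adic unit, so by \Cref{lemmaLift2} it is enough to exhibit a primitive solution of $Q \equiv t \pmod 8$ for each $t \in \{1,2,3,5,6,7\}$. The eight vectors
\[
    (1,0,0,0),\ (1,2,0,0),\ (1,1,0,0),\ (0,0,1,0),\ (1,0,1,1),\ (1,2,1,1),\ (1,1,1,0),\ (1,1,2,0)
\]
have $Q$-values $1,\,5,\,2,\,-D,\,1-2D,\,5-2D,\,2-D,\,2-4D$, and for each residue of $D$ modulo $8$ a direct computation shows these cover all six target residues; each vector has an odd coordinate and so is primitive. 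Now suppose $D$ is even, say $D = 2D'$ with $D'$ odd (it is squarefree, hence not divisible by $4$). Here $\det = 4D'^2$ is not a unit, so we use the sharper criterion of \Cref{lemmaLift2}: since $\partial Q/\partial z = -4D'z$ and $\partial Q/\partial w = -4D'w$ vanish modulo $4$ while $\partial Q/\partial x = 2x$ and $\partial Q/\partial y = 2y$, a primitive solution modulo $8$ lifts as soon as $x$ or $y$ is odd. For a unit target $u$, the value $x^2+y^2$ must be odd, so exactly one of $x,y$ is odd and the condition holds; the vectors $(1,0,0,0),(1,2,0,0),(1,0,1,0),(1,2,1,0)$ realize the residues $1,\,5,\,1-2D',\,5-2D' \pmod 8$, which exhaust $\{1,3,5,7\}$ because $D'$ is odd. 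For a target $2u$ one needs $Q \equiv 2 \pmod 4$ with $x$ or $y$ odd, which forces $x,y$ both odd; then $(1,1,0,0)$ and $(1,1,1,1)$ give $2$ and $2-4D' \equiv 6 \pmod 8$, covering $\{2,6\}$. In every case $x$ is odd, so the solution is primitive and \Cref{lemmaLift2} applies, completing the proof.

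The main obstacle is precisely the prime $2$: the lifting criterion there is weaker (agreement modulo $8$, with a nonvanishing condition on the partial derivatives modulo $4$), and when $D$ is even the Gram determinant is not a unit, so one cannot simply lift an arbitrary residue and must instead arrange the gradient condition. The scaling reduction to $2$-adic valuations $0$ and $1$ is what both keeps the casework finite and leaves room to meet that condition.
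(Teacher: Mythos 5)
Your proof is correct and follows essentially the same route as the paper: reduce to $4 \nmid a$ by scaling, then exhibit explicit primitive solutions of $Q \equiv a \pmod{8}$ meeting the gradient condition of \Cref{lemmaLift2} and lift. The only immaterial difference is organizational -- you split the casework by the parity of $D$ and invoke the determinant clause of \Cref{lemmaLift2} when $D$ is odd, whereas the paper always arranges an odd $x$ directly.
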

\begin{proof}
We can assume $ a \neq 0 $. First, suppose that $ 4 \nmid a $, hence $ a \equiv 1, 2, 3, 5, 6, 7 \pmod{8} $. We find a solution to the congruence
\[
    x^2+y^2-Dz^2-Dw^2 \equiv a \pmod{8}
\]
where $ x $ is odd, hence $ \frac{\partial Q}{\partial x} = 2x \neq 0 \pmod{4} $. This solution lifts to a true solution in $ \Z_2 $ by \Cref{lemmaLift2}.

Let
\[
    (x, y) = \begin{cases}
        (1,0),&\text{if }a \equiv 1 \pmod{8},\\
        (1,1),&\text{if }a \equiv 2 \pmod{8},\\
        (1,2),&\text{if }a \equiv 5 \pmod{8}.
    \end{cases}
\]
We have $ x^2+y^2 \equiv a \pmod{8} $ in each case.

If $ a \equiv 3 \pmod{8} $, let
\[
    (z, w) = \begin{cases}
        (1,1),&\text{if }D \equiv 1,3,5 \pmod{8},\\
        (1,0),&\text{if }D \equiv 2,6,7 \pmod{8},
    \end{cases}
\]
if $ a \equiv 6 \pmod{8} $, let
\[
    (z, w) = \begin{cases}
        (2,0),&\text{if }D \equiv 1,3,5,7 \pmod{8},\\
        (1,1),&\text{if }D \equiv 2,6 \pmod{8},
    \end{cases}
\]
and if $ a \equiv 7 \pmod{8} $, let
\[
    (z, w) = \begin{cases}
        (1,1),&\text{if }D \equiv 1,3,5,7\pmod{8},\\
        (1,0),&\text{if }D \equiv 2,6 \pmod{8}.
    \end{cases}
\]
In each of the above cases, $ a+Dz^2+Dw^2 \pmod{8} \in \{1, 2, 5\} $, hence we can find $ (x, y) = (1, y) $ such that
\[
    x^2+y^2 \equiv a+Dz^2+Dw^2 \pmod{8}.
\]

Secondly, assume $ 4 \mid a $. Let us write $ a = 4^k a_1 $ where $ 4 \nmid a_1 $. By the first part of the proof, there exists a tuple $ (x_1, y_1, z_1, w_1) \in \Z_p^4 $ such that
\[
    x_1^2+y_1^2+Dz_1^2+Dw_1^2 = a_1
\]
and we set $ (x, y, z, w) = (2^kx_1, 2^ky_1, 2^kz_1, 2^kw_1) $.
\end{proof}

Now we can observe that in the proof of \Cref{thmIndef} the form $ Q $ represents $ a $ over $ \Z_p $ for $ p $ odd by \Cref{lemmaPOdd} and for $ p = 2 $ by \Cref{lemmaP2}. And by \Cref{thmLG}, it represents $ a $ over $ \Z $.

\begin{proposition}
\label{propRepZero}
Let $ D \in \Z_{\geq 1} $. The quadratic form
\[
    Q(x, y, z, w) = x^2+y^2-Dz^2-Dw^2
\]
represents $ 0 $ non-trivially if and only if $ D $ is a sum of two squares.
\end{proposition}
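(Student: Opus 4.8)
The plan is to prove both implications essentially directly, translating the equation $x^2+y^2 = D(z^2+w^2)$ into a statement about the $p$-adic valuations of $D$ at primes $p\equiv 3\pmod 4$, and then invoking the classical two-squares theorem: a positive integer $n$ is a sum of two integer squares if and only if $v_p(n)$ is even for every prime $p\equiv 3\pmod 4$.

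The ``if'' direction is immediate: if $D=a^2+b^2$ with $a,b\in\Z$, then $(x,y,z,w)=(a,b,1,0)$ satisfies $a^2+b^2-D\cdot 1-D\cdot 0=0$ and is nontrivial because $z=1$, so $Q$ represents $0$ non-trivially.

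For the ``only if'' direction, suppose $(x,y,z,w)\in\Z^4$ is a nontrivial zero of $Q$. Since $D\geq 1$, the pair $(z,w)$ cannot be $(0,0)$, for otherwise $x^2+y^2=0$ would force $x=y=0$; hence $N:=z^2+w^2\geq 1$ and also $x^2+y^2=DN\geq 1$, so all relevant valuations are finite. The key lemma is that for a prime $p\equiv 3\pmod 4$ and integers $a,b$ not both zero, $v_p(a^2+b^2)$ is even. This follows by descent: since $-1$ is a quadratic non-residue modulo $p$, $p\mid a^2+b^2$ forces $p\mid a$ and $p\mid b$, whence $v_p(a^2+b^2)=2+v_p\bigl((a/p)^2+(b/p)^2\bigr)$, and one iterates. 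Applying the lemma to both sides of $x^2+y^2=DN$ yields $v_p(D)=v_p(x^2+y^2)-v_p(N)\equiv 0\pmod 2$ for every prime $p\equiv 3\pmod 4$, and the two-squares theorem then gives that $D$ is a sum of two integer squares.

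I do not expect a serious obstacle here; the only point needing a little care is the valuation lemma, and the only external input is the classical two-squares theorem. As an alternative for the ``only if'' direction, one may argue via Gaussian integers: the equation says $D=N_{\Q(i)/\Q}\bigl((x+yi)/(z+wi)\bigr)$, so clearing denominators gives $D(z^2+w^2)^2=u^2+v^2$ for some $u,v\in\Z$, and dividing by the square $(z^2+w^2)^2$ exhibits $D$ as a sum of two \emph{rational} squares, which is then a sum of two integer squares. (One could also deduce everything from the Hasse--Minkowski principle, since $Q$ is indefinite over $\R$, but that is heavier machinery than needed.) I would present the valuation version for concreteness.
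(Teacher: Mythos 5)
Your proof is correct. The ``if'' direction coincides with the paper's. For the ``only if'' direction, the underlying number-theoretic input is the same in both arguments (namely that $-1$ is a quadratic non-residue modulo a prime $p\equiv 3\pmod 4$, together with the classical two-squares criterion), but the packaging differs. The paper first reduces to squarefree $D$ via the substitution $D=k^2d$, passes to a \emph{primitive} solution, and then runs a case analysis on whether $p$ divides $x,y$ or $z,w$ to conclude that every odd prime divisor of $D$ is $\equiv 1\pmod 4$; the squarefree hypothesis is needed there to pass from $p^2\mid D(z^2+w^2)$ to $p\mid z^2+w^2$. You instead prove once and for all that $v_p(a^2+b^2)$ is even for $p\equiv 3\pmod 4$ and $(a,b)\neq(0,0)$, and apply this to both sides of $x^2+y^2=D(z^2+w^2)$ to get that $v_p(D)$ is even. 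This handles arbitrary $D\geq 1$ in one pass, with no reduction to the squarefree case, no primitivity normalization, and no case split; the only point requiring care, which you address, is that both $z^2+w^2$ and $x^2+y^2$ are nonzero so that all valuations are finite. The paper's version is marginally more self-contained in that it only invokes the two-squares theorem for squarefree integers (``no prime factor $\equiv 3\pmod 4$''), whereas you use the full valuation form of that theorem, but both are standard. Your Gaussian-integer alternative is also sound, provided one cites the fact that an integer which is a sum of two rational squares is a sum of two integer squares.
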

\begin{proof}
Assume that $ D $ is a sum of two squares, say $ D = a^2+b^2 $ where $ a, b \in \Z $. If $ (x, y, z, w) = (a, b, 1, 0) $, then $ Q(x, y, z, w) = 0 $.

Now we prove the converse under the assumption that $ D $ is squarefree. Let $ p \mid D $ be an odd prime and $ (x, y, z, w) $ be a primitive solution to
\[
    x^2+y^2-Dz^2-Dw^2 = 0.
\]
In particular, not all of $ x $, $ y $, $ z $, $ w $ are divisible by $ p $.

First, suppose that $ p \nmid x $ or $ p \nmid y $. Without loss of generality, we can assume $ p \nmid y $. Reducing the equation modulo $ p $, we get $ x^2+y^2 \equiv 0 \pmod{p} $, hence
\[
    \left(\frac{x}{y}\right)^2 \equiv -1 \pmod{p}.
\]
This shows that $ - 1 $ is a square modulo $ p $, hence $ p \equiv 1 \pmod{4} $.

Secondly, suppose that $ p \mid x $ and $ p \mid y $. From the equation, we have $ p^2 \mid D(z^2+w^2) $, and because $ D $ is squarefree, $ p \mid z^2+w^2 $. Since $ (x, y, z, w) $ is primitive, we get $ p \nmid z $ or $ p \nmid w $, and the congruence $ z^2+w^2 \equiv 0 \pmod{p} $ implies $ p \equiv 1 \pmod{4} $. We proved that every odd prime $ p \mid D $ satisfies $ p \equiv 1 \pmod{4} $, hence $ D $ is a sum of two squares.

To prove the converse for general $ D $, write $ D = k^2d $ where $ k, d \in \Z $ and $ d $ is squarefree. If $ (x_0, y_0, z_0, w_0) $ is a solution to
\[
    x^2+y^2-k^2dz^2-k^2dw^2=0,
\]
then $ (x_0, y_0, kz_0, kw_0) $ is a solution to
\[
    x^2+y^2-dz^2-dw^2=0.
\]
Thus, if $ Q $ represents $ 0 $, then $ x^2+y^2-dz^2-dw^2 $ also represents $ 0 $. By the first part of the proof, $ d $ is a sum of two squares, hence $ D $ is also a sum of two squares.
\end{proof}

\section{Examples of universal binary generalized forms}
\label{secExamples2}

\subsection{Preliminaries}
\label{subsecPreliminaries}

Let $ K = \Q(\sqrt{D}) $ where $ D \in \Z_{\geq 2} $ is squarefree, $ G $ be a $ \Z $-valued generalized form over $ K $ in $ n $ variables, and $ Q $ be the quadratic form in $ 2n $ variables associated to $ G $. By \Cref{lemmaGQ}, if one of the following conditions is satisfied:
\begin{enumerate}[i)]
    \item $ D \equiv 2, 3 \pmod{4} $ and $ G $ is integral,
    \item $ D \equiv 1 \pmod{4} $ and $ G $ is classical,
\end{enumerate}
then $ Q $ is classical. By \Cref{thmBinary}, a $ \Z $-universal binary generalized form satisfying the conditions above exists over $ K $ if and only if $ D \in \{2,3,5,6,7,10\} $.

A complete list of universal classical positive definite quaternary quadratic forms (up to equivalence) is available \cite[Table~5]{Bh}. In this section, we answer the following question: Which of these quadratic forms are equivalent to a quadratic form $ Q $ associated to $ G $ satisfying the conditions above? For each $ D \in \{2,3,5,6,7,10\} $ we identify all such forms.

We use Bhargava's notation $D:\ a\ b\ c\ d\ e\ f$ for a ternary quadratic form with matrix
\[
    M_L = \begin{pmatrix}
        a&\frac{f}{2}&\frac{e}{2}\\
        \frac{f}{2}&b&\frac{d}{2}\\
        \frac{e}{2}&\frac{d}{2}&c
    \end{pmatrix}
\]
of determinant $ D $. If $ L = D:\ a\ b\ c\ d\ e\ f$, then $ 1 \oplus L $ denotes the quaternary form with matrix
\[
    M_{1\oplus L} = \begin{pmatrix}
        1&0&0&0\\
        0&a&\frac{f}{2}&\frac{e}{2}\\
        0&\frac{f}{2}&b&\frac{d}{2}\\
        0&\frac{e}{2}&\frac{d}{2}&c
    \end{pmatrix}.
\]
We also need a concise way of representing a generalized binary form. Thus we let $ [a,b,c,d,e,f] $ represent the binary generalized form with matrix
\[
    \begin{pmatrix}
        a&\frac{b}{2}&\frac{d}{2}&\frac{e}{2}\\
        \frac{b}{2}&c&\frac{\tau(e)}{2}&\frac{f}{2}\\
        \frac{d}{2}&\frac{\tau(e)}{2}&\tau(a)&\frac{\tau(b)}{2}\\
        \frac{e}{2}&\frac{f}{2}&\frac{\tau(b)}{2}&\tau(c)
    \end{pmatrix}
\]
where $ a, b, c, e \in \O_K $ and $ d, f \in \Z $. We also use $ [a_{11}, a_{12}, a_{13}, a_{14}, a_{22}, a_{23}, a_{24}, a_{33}, a_{34}, a_{44}] $ to represent the quaternary quadratic form with matrix
\[
    \begin{pmatrix}
        a_{11}&\frac{a_{12}}{2}&\frac{a_{13}}{2}&\frac{a_{14}}{2}\\
        \frac{a_{12}}{2}&a_{22}&\frac{a_{23}}{2}&\frac{a_{24}}{2}\\
        \frac{a_{13}}{2}&\frac{a_{23}}{2}&a_{33}&\frac{a_{34}}{2}\\
        \frac{a_{14}}{2}&\frac{a_{24}}{2}&\frac{a_{34}}{2}&a_{44}
    \end{pmatrix}.
\]

\subsection{Universal quadratic forms associated to binary generalized forms for \texorpdfstring{$ D=2 $}{D=2}}
\label{subsecExamplesD2}

Let $ G $ be a $ \Z $-universal binary generalized form over $ \Q(\sqrt{2}) $, $ Q $ be the associated quaternary quadratic form, and $ M_Q $ be the matrix of $ Q $. We have $ 4 \mid \det(M_Q) $ by \Cref{lemmaDetMQ} and $ \frac{\det(M_Q)}{4} \equiv 0, 1 \pmod{4} $ by \Cref{propDetBinary}. From the table of universal classical quaternary forms \cite[Table 5]{Bh}, we select all entries satisfying these two conditions. We also know that $ \rank(M_Q \bmod 2) \leq 2 $ by \Cref{propRank}, hence we compute the rank of each of them. The results are in \Cref{tabD2a}. We see that we are left with $ 23 $ possible candidates.

\begin{table}[ht]
    \centering
    \begin{tabular}{|l|r|}
        \hline
        $L$&$\rank(M_{1\oplus L} \bmod{2})$\\
        \hline
        $4:\ 1\ 1\ 4\ 0\ 0\ 0$&$3$\\
        $4:\ 1\ 2\ 2\ 0\ 0\ 0$&$2$\\
        $4:\ 2 \ 2\ 2\ 2\ 2\ 0$&$3$\\
        $16:\ 1\ 2\ 8\ 0\ 0\ 0$&$2$\\
        $16:\ 2\ 2\ 4\ 0\ 0\ 0$&$1$\\
        $16:\ 2\ 3\ 3\ 2\ 0\ 0$&$2$\\
        $20:\ 1\ 2\ 10\ 0\ 0\ 0$&$2$\\
        $20:\ 2\ 2\ 5\ 0\ 0\ 0$&$2$\\
        $20:\ 2\ 2\ 6\ 2\ 2\ 0$&$3$\\
        $20:\ 2\ 3\ 4\ 0\ 0\ 2$&$3$\\
        $20:\ 2\ 4\ 4\ 4\ 2\ 0$&$3$\\
        $32:\ 2\ 4\ 4\ 0\ 0\ 0$&$1$\\
        $32:\ 2\ 4\ 5\ 4\ 0\ 0$&$2$\\
        $36:\ 2\ 3\ 6\ 0\ 0\ 0$&$2$\\
        $36:\ 2\ 4\ 5\ 0\ 2\ 0$&$3$\\
        $36:\ 2\ 4\ 6\ 4\ 2\ 0$&$3$\\
        $36:\ 2\ 5\ 5\ 4\ 2\ 2$&$3$\\
        $48:\ 2\ 3\ 8\ 0\ 0\ 0$&$2$\\
        $48:\ 2\ 4\ 6\ 0\ 0\ 0$&$1$\\
        $48:\ 2\ 5\ 5\ 2\ 0\ 0$&$2$\\
        $52:\ 2\ 3\ 9\ 2\ 0\ 0$&$2$\\
        $52:\ 2\ 5\ 6\ 2\ 0\ 2$&$3$\\
        $52:\ 2\ 5\ 6\ 4\ 0\ 0$&$2$\\
        $64:\ 2\ 4\ 8\ 0\ 0\ 0$&$1$\\
        $68:\ 2\ 4\ 9\ 0\ 2\ 0$&$3$\\
        $68:\ 2\ 4\ 10\ 4\ 2\ 0$&$3$\\
        $68:\ 2\ 5\ 7\ 2\ 0\ 0$&$2$\\
        $80:\ 2\ 4\ 10\ 0\ 0\ 0$&$1$\\
        $80:\ 2\ 4\ 11\ 4\ 0\ 0$&$2$\\
        $80:\ 2\ 5\ 8\ 0\ 0\ 0$&$2$\\
        $96:\ 2\ 4\ 12\ 0\ 0\ 0$&$1$\\
        $96:\ 2\ 4\ 13\ 4\ 0\ 0$&$2$\\
        $100:\ 2\ 4\ 13\ 0\ 2\ 0$&$3$\\
        $100:\ 2\ 4\ 14\ 4\ 2\ 0$&$3$\\
        $100:\ 2\ 5\ 10\ 0\ 0\ 0$&$2$\\
        $112:\ 2\ 4\ 14\ 0\ 0\ 0$&$1$\\
        \hline
    \end{tabular}
    \caption{Ternary forms $ L $ such that $ 1\oplus L $ is universal, $ \det(M_{1\oplus L}) $ is divisible by $ 4 $, and $ \frac{\det(M_{1\oplus L})}{4} \equiv 0, 1 \pmod{4} $.}
    \label{tabD2a}
\end{table}

To determine which of these candidates are equivalent to a quadratic form associated to some $ G $, we use the \emph{$2$-adic normal form}. Let $ \NF(M) $ denote the normal form of a symmetric matrix $ M $ over $ \Z_2 $. The precise definition is very technical and we do not give it here. It is sufficient for us to be able to compute $ \NF(M) $, for which we use SageMath. Details on the implementation of the $ p $-adic normal form can be found in the reference manual \cite{Sa}. The important property of the normal form is uniqueness: Let $ Q_1 $ and $ Q_2 $ be two quadratic forms in $ n $ variables over $ \Z_2 $ with matrices $ M_{Q_1} $ and $ M_{Q_2} $, respectively. Then $ Q_1 $ and $ Q_2 $ are equivalent over $ \Z_2 $ if and only if $ \NF(M_{Q_1}) = \NF(M_{Q_2}) $.

\begin{lemma}
\label{lemmaNF}
Let $ K = \Q(\sqrt{2}) $ and let $ G $ be an integral binary generalized form over $ K $ such that $ G(\mathbf{a}) \in \Q $ for every $ \mathbf{a} \in K^2 $. Let $ Q $ be the quaternary quadratic form associated to $ G $ and $ M_Q $ be the matrix of $ Q $. We have $ \NF(M_Q) \bmod{8} = \NF(M) \bmod{8} $ for some matrix $ M $ such that
\begin{equation}
\label{eqM}
    M = \begin{pmatrix}
        A_1&4a_2&B_1&2E_1\\
        4a_2&2A_2&2E_2&2B_2\\
        B_1&2E_2&C_1&4c_2\\
        2E_1&2B_2&4c_2&2C_2
    \end{pmatrix}
\end{equation}
where $ A_1, B_1, C_1 \in \{0, \dots, 7\} $, $ a_2, c_2 \in \{0, 1\} $, $ A_2, C_2, B_2, E_1, E_2 \in \{0, \dots, 3\} $, $ A_1+A_2 \equiv 0 \pmod{4} $, $ C_1+C_2 \equiv 0 \pmod{4} $, $ B_1+B_2 \equiv 0 \pmod{2} $, and $ E_1+E_2 \equiv 0 \pmod{2} $.
\end{lemma}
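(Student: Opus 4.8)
The plan is to compute $M_Q$ explicitly, observe that it already has the shape \eqref{eqM}, reduce the parameters to the prescribed ranges, and then control what this reduction does to the $2$-adic normal form modulo $8$.

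First I would invoke \Cref{lemmaMQBin} with $D = 2$. Writing $a = a_1 + a_2\omega_2$, $b = b_1 + b_2\omega_2$, $c = c_1 + c_2\omega_2$, $e = e_1 + e_2\omega_2$ (with $\omega_2 = \sqrt2$) and $d = \beta_{11}$, $f = \beta_{22}$, the assumptions that $G$ is integral and $\Q$-valued force all of $a_1, a_2, b_1, b_2, c_1, c_2, e_1, e_2, d, f$ to lie in $\Z$. \Cref{lemmaMQBin} then shows that $M_Q$ is precisely of the form \eqref{eqM} with
\[
A_1 = 2a_1 + d,\ A_2 = 2a_1 - d,\ B_1 = b_1 + e_1,\ B_2 = b_1 - e_1,\ C_1 = 2c_1 + f,\ C_2 = 2c_1 - f,\ E_1 = b_2 - e_2,\ E_2 = b_2 + e_2,
\]
and the same $a_2, c_2$, where these integers satisfy $A_1 + A_2 = 4a_1$, $C_1 + C_2 = 4c_1$, $B_1 + B_2 = 2b_1$, and $E_1 + E_2 = 2b_2$, hence the four divisibility conditions in the statement. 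So $M_Q$ already has the asserted shape; only the parameters are not yet reduced.

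Next I would set $M$ to be the matrix obtained by replacing $A_1, B_1, C_1$ by their residues in $\{0,\dots,7\}$, replacing $a_2, c_2$ by their residues in $\{0,1\}$, and replacing $A_2, C_2, B_2, E_1, E_2$ by their residues in $\{0,\dots,3\}$. Since $4a_2 \bmod 8$ depends only on $a_2 \bmod 2$, $2A_2 \bmod 8$ only on $A_2 \bmod 4$, and likewise for every other entry, this gives $M \equiv M_Q \pmod 8$ entrywise. The four conditions also persist, because reducing $A_1$ modulo $8$ leaves $A_1 \bmod 4$ unchanged and reducing $A_2$ modulo $4$ leaves $A_2 \bmod 4$ unchanged, so $A_1 + A_2 \equiv 0 \pmod 4$ is preserved, and similarly for the other three. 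Hence $M$ has all the required properties, and what remains is to prove $\NF(M) \equiv \NF(M_Q) \pmod 8$.

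This last step is the real obstacle: $M \equiv M_Q \pmod 8$ does \emph{not} force $M$ and $M_Q$ to be $\Z_2$-equivalent (the rank-one forms $2x^2$ and $10x^2$ already show this), so uniqueness of the normal form is not directly available; what one needs instead is that the normal form \emph{reduced modulo $8$} is a function of the Gram matrix modulo $8$. I would establish this from the Jordan splitting behind $\NF$: over $\Z_2$ the form decomposes orthogonally as $\bigoplus_{k\ge 0} 2^k W_k$ with each $W_k$ unimodular in canonical shape, so $\NF \bmod 8$ equals $(W_0 \oplus 2W_1 \oplus 4W_2) \bmod 8$, the blocks with $k \ge 3$ being $0$ modulo $8$. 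The multiplicities of the scales $1, 2, 4$ are the multiplicities of $1, 2, 4$ among the elementary divisors of the Gram matrix over $\Z_2$, hence are determined modulo $8$; and for $k \le 2$ the reduction $2^k W_k \bmod 8$ is pinned down by the rank, type, determinant class, and oddity of $W_k$, all of which are detected modulo $8$ (for $k = 2$ one has $4W_k \equiv 4I \pmod 8$ whenever $W_k$ is of odd type, and the even-type case is likewise forced). This yields $\NF(M) \equiv \NF(M_Q) \pmod 8$; alternatively one may quote the explicit description of the $2$-adic normal form from \cite{Sa}. With the lemma in hand, the set of possible values of $\NF(M_Q) \bmod 8$ is finite and explicitly computable, which is exactly what is needed to sift the list in \Cref{tabD2a} by a finite calculation.
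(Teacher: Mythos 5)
Your proof follows the paper's argument exactly: it computes $M_Q$ via \Cref{lemmaMQBin} with $D=2$, introduces the same parameters $A_1 = 2a_1+d$, $A_2 = 2a_1-d$, $B_1=b_1+e_1$, etc., verifies the stated congruences, and reduces the entries modulo $8$. The only difference is that the paper simply asserts the key fact that $\NF(M_Q) \bmod 8$ depends only on $M_Q \bmod 8$, whereas you correctly flag it as the nontrivial point and sketch a justification via the Jordan splitting --- extra detail rather than a divergence.
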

\begin{proof}
By \Cref{lemmaMQBin} with $ D = 2 $, the matrix of $ Q $ equals
\[
    M_{Q} = \begin{pmatrix}
        2a_1+d&4a_2&b_1+e_1&2(b_2-e_2)\\
        4a_2&2(2a_1-d)&2(b_2+e_2)&2(b_1-e_1)\\
        b_1+e_1&2(b_2+e_2)&2c_1+f&4c_2\\
        2(b_2-e_2)&2(b_1-e_1)&4c_2&2(2c_1-f)
     \end{pmatrix}.
\]
Let $ A_1 = 2a_1+d $, $ A_2 = 2a_1-d $, $ C_1 = 2c_1+f $, $ C_2 = 2c_1-f $, $ B_1 = b_1+e_1 $, $ B_2 = b_1-e_1 $, $ E_1 = b_2-e_2 $, and $ E_2 = b_2+e_2 $. These parameters clearly satisfy the stated congruences. Since $ \NF(M_Q) \bmod{8} $ depends only on $ M_Q \bmod{8} $, we can reduce $ M_Q $ modulo $ 8 $ to obtain one of the matrices \eqref{eqM}.
\end{proof}

We compute $ \NF(M_{1\oplus L}) \bmod{8} $ for each of the forms in \Cref{tabD2a} satisfying $ \rank(1\oplus L \bmod{2}) \leq 2 $ and check whether it equals $ \NF(M) \bmod{8} $ for one of the finitely many matrices $ M $ in \Cref{lemmaNF}. If not, then we know that $ 1\oplus L $ is not equivalent to a quadratic form associated to some $ G $ and we can discard it. The SageMath code of this computation is available at: \url{https://github.com/zinmik/Generalized-forms}.

We are left with $ 15 $ quadratic forms $ 1\oplus L $ and for each one, we directly find $ G $ such that the quadratic form associated to $ G $ is equivalent to $ 1\oplus L $, see \Cref{tabD2b}.

\begin{table}[ht]
    \centering
    \begin{tabular}{|l|l|l|}
        \hline
        $L$&$G$&$Q$\\
        \hline
        $4:\ 1\ 2\ 2\ 0\ 0\ 0$&$[1,-\sqrt{2},1,-1,0,-1]$&$[1,0,0,-4,6,-4,0,1,0,6]$\\
        $16:\ 2\ 2\ 4\ 0\ 0\ 0$&$[1,-1,1,-1,1,0]$&$[1,0,0,0,6,0,-8,2,0,4]$\\
        $20:\ 1\ 2\ 10\ 0\ 0\ 0$&$[1,-\sqrt{2},2,-1,-\sqrt{2},-1]$&$[1,0,0,0,6,-8,0,3,0,10]$\\
        $20:\ 2\ 2\ 5\ 0\ 0\ 0$&$[1,-\sqrt{2},2,1,\sqrt{2},1]$&$[3,0,0,-8,2,0,0,5,0,6]$\\
        $32:\ 2\ 4\ 4\ 0\ 0\ 0$&$[1,-1-\sqrt{2},2,-1,1,-2]$&$[1,0,0,-4,6,-4,-8,2,0,12]$\\
        $36:\ 2\ 3\ 6\ 0\ 0\ 0$&$[1,0,1,-1,0,1]$&$[1,0,0,0,6,0,0,3,0,2]$\\
        $48:\ 2\ 4\ 6\ 0\ 0\ 0$&$[1,0,1,-1,0,0]$&$[1,0,0,0,6,0,0,2,0,4]$\\
        $52:\ 2\ 3\ 9\ 2\ 0\ 0$&$[1,-\sqrt{2},2,1,0,-1]$&$[3,0,0,-4,2,-4,0,3,0,10]$\\
        $52:\ 2\ 5\ 6\ 4\ 0\ 0$&$[1,-\sqrt{2},2,-1,0,1]$&$[1,0,0,-4,6,-4,0,5,0,6]$\\
        $64:\ 2\ 4\ 8\ 0\ 0\ 0$&$[1,-\sqrt{2},2,-1,0,-2]$&$[1,0,0,-4,6,-4,0,2,0,12]$\\
        $68:\ 2\ 5\ 7\ 2\ 0\ 0$&$[2,-2\sqrt{2},2,-1,0,1]$&$[3,0,0,-8,10,-8,0,5,0,6]$\\
        $80:\ 2\ 4\ 10\ 0\ 0\ 0$&$[1,-1,2,0,-1,-1]$&$[2,0,-4,0,4,0,0,3,0,10]$\\
        $96:\ 2\ 4\ 12\ 0\ 0\ 0$&$[1,-1+\sqrt{2},3-\sqrt{2},-1,1+\sqrt{2},-2]$&$[1,0,0,0,6,8,-8,4,-8,16]$\\
        $100:\ 2\ 5\ 10\ 0\ 0\ 0$&$[2,-\sqrt{2},2,-1,\sqrt{2},1]$&$[3,0,0,-8,10,0,0,5,0,6]$\\
        $112:\ 2\ 4\ 14\ 0\ 0\ 0$&$[2,0,1,-3,0,0]$&$[1,0,0,0,14,0,0,2,0,4]$\\
        \hline
    \end{tabular}
    \caption{Quadratic forms associated to $\Z$-universal binary generalized forms over $ \Q(\sqrt{2}) $.}
    \label{tabD2b}
\end{table}

We proved the following proposition.
\begin{proposition}
\label{propExamplesD2}
Let $ K = \Q(\sqrt{2}) $ and let $ G $ be a $ \Z $-universal binary generalized form over $ K $. If $ Q $ is the quadratic form associated to $ G $, then $ Q $ is equivalent to $ 1 \oplus L $ where $ L $ is one of the $ 15 $ ternary forms in \Cref{tabD2b}. Conversely, for each of these forms, there exists an equivalent quadratic form associated to a $ \Z $-universal binary form over $ K $.
\end{proposition}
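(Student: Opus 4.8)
The plan is to intersect Bhargava's classification of classical positive definite universal quaternary forms with the arithmetic constraints established in Sections~\ref{secGenForms}--\ref{secBinary}, and then to confirm the resulting list by exhibiting an explicit generalized form for each surviving entry.

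\textbf{Forward direction.} I would start with a $\Z$-universal binary generalized form $G$ over $K=\Q(\sqrt 2)$ and pass to its associated quaternary quadratic form $Q$ with matrix $M_Q$. Since $G$ is positive definite, integral, and $\Z$-universal, $Q$ is positive definite and universal; as $D=2\equiv 2\pmod 4$, Lemma~\ref{lemmaGQ} shows that $Q$ is moreover classical. By the $15$-Theorem, $Q$ is equivalent to one of the finitely many forms in \cite[Table~5]{Bh}, each of which has the shape $1\oplus L$ for a ternary $L$ of determinant at most $112$. I would then prune the list using the necessary conditions proved earlier: $4\mid\det(M_Q)$ by Corollary~\ref{corDetMQ}, $\det(M_Q)/4\equiv 0,1\pmod 4$ by Proposition~\ref{propDetBinary}, and $\rank(M_Q\bmod 2)\le 2$ by Proposition~\ref{propRank}. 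Scanning \cite[Table~5]{Bh}, precisely the forms collected in Table~\ref{tabD2a} pass the two determinant tests, and of these only the ones with $\rank(M_{1\oplus L}\bmod 2)\le 2$ can occur.

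\textbf{Sharpening via the $2$-adic normal form.} Next I would invoke Lemma~\ref{lemmaNF}: if $1\oplus L$ is $\Z$-equivalent to the quadratic form associated to some integral $\Z$-valued binary generalized form over $K$, then---since $\Z$-equivalence forces $\Z_2$-equivalence and the $2$-adic normal form is a complete $\Z_2$-invariant---$\NF(M_{1\oplus L})\bmod 8$ must agree with $\NF(M)\bmod 8$ for one of the finitely many explicit matrices $M$ of the form~\eqref{eqM}. Enumerating these matrices and computing the relevant normal forms (the SageMath code is in the linked repository), every surviving candidate other than the $15$ forms of Table~\ref{tabD2b} fails this test and is eliminated. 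This shows that the $Q$ associated to any $\Z$-universal binary generalized form over $K$ is equivalent to $1\oplus L$ for one of those $15$ ternary forms.

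\textbf{Converse and main obstacle.} It remains to verify the data of Table~\ref{tabD2b}: for each listed $L$, the companion $G=[a,b,c,d,e,f]$ is integral and $\Q$-valued by inspection---its matrix has the conjugate-coefficient shape fixed in Subsection~\ref{subsecPreliminaries}, so Proposition~\ref{propGenPol} applies---hence $\Z$-valued; its associated quaternary form, computed via Lemma~\ref{lemmaMQBin} with $D=2$, is exactly the $Q$ recorded in the table, and one checks $Q\sim 1\oplus L$ by producing an explicit matrix in $\SL_4(\Z)$ conjugating one into the other, just as in Lemmas~\ref{lemmaExD7} and~\ref{lemmaExD10}; since $1\oplus L$ is positive definite and universal, $G$ is then a positive definite $\Z$-universal binary generalized form. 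The subtle point is that Lemma~\ref{lemmaNF} yields only a necessary condition for realizability, so the elimination of candidates and the realization of the survivors must rest on genuinely different arguments---a finite $2$-adic normal-form computation on one side and the explicit list of witnesses in Table~\ref{tabD2b} on the other---and it is their agreement that pins down the answer. The normal-form enumeration over all matrices~\eqref{eqM} is routine but voluminous, so I would carry it out in the accompanying code rather than reproduce it here.
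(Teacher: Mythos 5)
Your proposal follows the paper's argument essentially verbatim: the same pruning of Bhargava's Table 5 by the determinant divisibility and residue conditions, the rank-mod-2 bound, the $2$-adic normal form computation via \Cref{lemmaNF}, and the explicit witnesses of \Cref{tabD2b} for the converse. It is correct and matches the paper's proof in both structure and all key ingredients.
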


\subsection{Universal quadratic forms associated to binary generalized forms for \texorpdfstring{$ D=3 $}{D=3}}
\label{subsecExamplesD3}

\begin{lemma}
\label{lemmaRank3mod4}
Let $ K = \Q(\sqrt{D}) $ where $ D \in \Z\setminus\{0,1\} $ is squarefree. Let $ G $ be an integral binary generalized form over $ K $ and $ Q $ be the quadratic form associated to $ G $ with matrix $ M_Q $. Assume that $ G(\mathbf{a}) \in \Q $ for every $ \mathbf{a} \in K^2 $. If $ D \equiv 3 \pmod{4} $, then $ \rank(M_Q \bmod{2}) \in \{0, 2, 4\} $.
\end{lemma}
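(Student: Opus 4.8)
The reduction \Cref{propRank} does not apply here, because $D\equiv 3\pmod 4$ forces $D$ odd, so $2\nmid D$; the prime $p=2$ needs a separate treatment. The plan is to write $M_Q\bmod 2$ down explicitly through \Cref{lemmaMQBin} and then read off its rank by uncovering a hidden $2\times2$ block structure over a small commutative ring. Since $D\equiv 3\pmod 4$ we have $\omega_D=\sqrt D$, and since $G$ is integral the quantities $a_1,a_2,b_1,b_2,c_1,c_2,e_1,e_2,d,f$ of \Cref{lemmaMQBin} all lie in $\Z$ (here $a=\alpha_{11}=a_1+a_2\sqrt D$ and so on, $d=\beta_{11}$, $f=\beta_{22}$); in particular $M_Q$ has integer entries. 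Reducing the $D\equiv2,3\pmod4$ formula of \Cref{lemmaMQBin} modulo $2$ — using $2x\equiv0$, $D\equiv1$, $-1\equiv1$ and $b_i-e_i\equiv b_i+e_i$ — and writing $\delta=d\bmod2$, $\phi=f\bmod2$, $p=(b_1+e_1)\bmod2$, $q=(b_2+e_2)\bmod2$, one obtains
\[
    M_Q\bmod 2 \;=\; \bar M \;:=\; \begin{pmatrix}
        \delta&0&p&q\\
        0&\delta&q&p\\
        p&q&\phi&0\\
        q&p&0&\phi
    \end{pmatrix}
\]
over $\mathbb{F}_2$. It then remains to prove that every matrix of this shape has rank $0$, $2$ or $4$.

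Next I would record the key structural remark. With $t=\left(\begin{smallmatrix}0&1\\1&0\end{smallmatrix}\right)\in M_2(\mathbb{F}_2)$, the four $2\times2$ blocks of $\bar M$, namely $\delta I$, $\phi I$ and $w:=pI+qt$, all lie in the commutative $\mathbb{F}_2$-subalgebra $R=\mathbb{F}_2 I+\mathbb{F}_2 t$ of $M_2(\mathbb{F}_2)$. Since $t^2=I$, there is an isomorphism $R\cong\mathbb{F}_2[T]/(T^2-1)=\mathbb{F}_2[T]/(T+1)^2$, a local ring whose maximal ideal $\mathfrak m=(t+I)$ satisfies $\mathfrak m^2=0$ and $\dim_{\mathbb{F}_2}\mathfrak m=1$, so that its only ideals are $0$, $\mathfrak m$, $R$. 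Thus $\bar M$ is the \emph{symmetric} matrix $\left(\begin{smallmatrix}\delta I&w\\ w&\phi I\end{smallmatrix}\right)\in M_2(R)$; and since any $P\in GL_2(R)$ acts on $R^2=\mathbb{F}_2^4$ as an $\mathbb{F}_2$-linear automorphism, the $\mathbb{F}_2$-rank of $\bar M$ is unchanged under the congruence $\bar M\mapsto P^\top\bar M P$ over $R$.

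I would then finish by two cases. If $\delta=\phi=0$, then $\bar M$ sends $(x,y)\in R^2$ to $(wy,wx)$, so its image is $wR\times wR$ and $\rank\bar M=2\dim_{\mathbb{F}_2}(wR)$; as $wR$ is an ideal of $R$ it equals $0$, $\mathfrak m$ or $R$, of $\mathbb{F}_2$-dimension $0$, $1$ or $2$, giving $\rank\bar M\in\{0,2,4\}$. If instead $\delta=1$ or $\phi=1$, then interchanging the two $R$-coordinates if necessary (a congruence by the block-swap matrix, which exchanges $\delta$ and $\phi$) lets me assume $\delta=1$, and completing the square over $R$ — another congruence — reduces $\bar M$ to $\operatorname{diag}(I,\ \phi I-w^2)$; here $w^2=(p+q)\,I\in\mathbb{F}_2 I$, because $t^2=I$ kills the would-be cross term $2pq\,t$ and $x^2=x$ on $\mathbb{F}_2$. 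Hence $\phi I-w^2=\varepsilon I$ for some $\varepsilon\in\mathbb{F}_2$ and $\rank\bar M=\rank\operatorname{diag}(I,\varepsilon I)\in\{2,4\}$. In every case the rank is even.

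The point I would be most careful about is that the naive shortcuts genuinely fail over $\mathbb{F}_2$: a symmetric matrix over $\mathbb{F}_2$ can have odd rank, and although $\bar M$ commutes with the involution swapping coordinates $1\leftrightarrow2$ and $3\leftrightarrow4$, commuting with an order-two operator does not by itself force even rank (the induced maps on the invariant subspace and on the quotient are conjugate, but the image of $\bar M$ may still meet that subspace in an extra dimension). What rescues the argument is combining this commutativity — packaged as the $R$-module structure — with the symmetry of $\bar M$, which is exactly what the congruence reduction over $R$ uses. If one prefers to avoid the ring $R$, the same conclusion follows from a direct check of the $16$ values of $(\delta,\phi,p,q)\in\mathbb{F}_2^4$; the argument above is just a uniform repackaging of that case analysis.
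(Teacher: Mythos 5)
Your proof is correct, and its first half coincides with the paper's: both reduce $M_Q$ modulo $2$ via \Cref{lemmaMQBin} (using that integrality of $G$ puts $a_1,a_2,\dots,d,f$ in $\Z$ when $D\equiv 3\pmod 4$, and that $D$ is odd) to the same matrix
\[
\begin{pmatrix} d&0&B&E\\ 0&d&E&B\\ B&E&f&0\\ E&B&0&f\end{pmatrix}\pmod 2,\qquad B=b_1+e_1,\ E=b_2+e_2 .
\]
The only difference is the final step: the paper simply observes that a check of the $16$ values of $(d,B,E,f)$ modulo $2$ gives rank $0$, $2$, or $4$ in every case, whereas you replace that check with a structural argument, viewing the matrix as a symmetric $2\times2$ matrix over the local ring $R=\mathbb{F}_2[t]/(t^2-1)$ and performing congruence reduction over $R$. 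Your computations there are sound ($w^2=(p+q)I$, the ideal classification of $R$, and the rank invariance under $GL_2(R)\subset GL_4(\mathbb{F}_2)$ all check out), and your caveat that symmetry alone or the swap-symmetry alone would not force even rank over $\mathbb{F}_2$ is a fair one. What your version buys is a uniform explanation of \emph{why} the rank is even; what the paper's version buys is brevity. Since you yourself note that the $16$-case verification is an acceptable shortcut, the two proofs are essentially interchangeable.
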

\begin{proof}
Let the matrix of $ G $ be given by \eqref{eqMGBinary}. By \Cref{lemmaMQBin},
\[
    M_Q = \begin{pmatrix}
        2a_1+d&2Da_2&b_1+e_1&D(b_2-e_2)\\
        2Da_2&D(2a_1-d)&D(b_2+e_2)&D(b_1-e_1)\\
        b_1+e_1&D(b_2+e_2)&2c_1+f&2Dc_2\\
        D(b_2-e_2)&D(b_1-e_1)&2Dc_2&D(2c_1-f)
    \end{pmatrix}.
\]
Let $ B = b_1+e_1 $ and $ E = b_2+e_2 $. Since $ D \equiv 1 \pmod{2} $, we get
\[
    M_Q \bmod{2} = \begin{pmatrix}
        d&0&B&E\\
        0&d&E&B\\
        B&E&f&0\\
        E&B&0&f
    \end{pmatrix}.
\]
There are $16$ choices for $ (d, B, E, f) $ modulo $ 2 $ and it can be checked that $ \rank(M_Q \bmod{2}) \in \{0,2,4\} $ for each of them.
\end{proof}

Let $ Q $ be a classical quaternary positive definite form over $ \Z $ with matrix $ M_Q $ and suppose that there exists a binary integral generalized form $ G $ over $ K = \Q(\sqrt{3}) $ such that $ Q $ is associated to $ G $. We know from \Cref{corDetMQ} that $ 3^2 \mid \det(M_Q) $. Moreover, by \Cref{propDetBinary},
\[
    \frac{\det(M_Q)}{3^2} \equiv 0, 1 \pmod{4}.
\]
By \Cref{propRank}, $ \rank(M_Q \bmod{3}) \leq 2 $ and by \Cref{lemmaRank3mod4}, $ \rank(M_Q \bmod{2}) $ is even.

In \Cref{tabD3}, we list all universal classical quaternary quadratic forms $ 1\oplus L $ with $ \det(M_{1\oplus L}) $ divisible by $ 9 $ and such that $ \frac{\det(M_{1\oplus L})}{9} \equiv 0, 1 \pmod{4} $ from \cite[Table 5]{Bh}. For each of them we compute its rank modulo $2$ and $3$.

\begin{table}[ht]
    \centering
    \begin{tabular}{|l|r|r|}
        \hline
         $L$&$\rank(M_{1\oplus L} \bmod{2})$&$\rank(M_{1\oplus L}\bmod{3})$\\
         \hline
         $9:\ 1\ 2\ 5\ 2\ 0\ 0$&$4$&$3$\\
         $9:\ 1\ 3\ 3\ 0\ 0\ 0$&$4$&$2$\\
         $9:\ 2\ 2\ 3\ 0\ 0\ 2$&$4$&$2$\\
         $36:\ 2\ 3\ 6\ 0\ 0\ 0$&$2$&$2$\\
         $36:\ 2\ 4\ 5\ 0\ 2\ 0$&$3$&$3$\\
         $36:\ 2\ 4\ 6\ 4\ 2\ 0$&$3$&$3$\\
         $36:\ 2\ 5\ 5\ 4\ 2\ 2$&$3$&$2$\\
         $45:\ 2\ 4\ 7\ 0\ 2\ 2$&$4$&$3$\\
         $45:\ 2\ 5\ 5\ 0\ 2\ 0$&$4$&$3$\\
         $45:\ 2\ 5\ 6\ 4\ 2\ 2$&$4$&$3$\\
         $72:\ 2\ 4\ 9\ 0\ 0\ 0$&$2$&$3$\\
         $72:\ 2\ 4\ 10\ 4\ 0\ 0$&$1$&$3$\\
         $72:\ 2\ 5\ 8\ 4\ 0\ 0$&$2$&$3$\\
         $90:\ 2\ 4\ 12\ 2\ 2\ 0$&$3$&$3$\\
         $90:\ 2\ 5\ 9\ 0\ 0\ 0$&$3$&$3$\\
         $108:\ 2\ 4\ 14\ 0\ 2\ 0$&$3$&$3$\\
         \hline
    \end{tabular}
    \caption{Ternary forms $ L $ such that $ 1\oplus L $ is universal, $ \det(M_{1\oplus L}) $ is divisible by $ 9 $ and $ \frac{\det(M_{1\oplus L})}{9} \equiv 0, 1 \pmod{4} $.}
    \label{tabD3}
\end{table}

\begin{proposition}
\label{propExamplesD3}
Let $ K = \Q(\sqrt{3}) $. If $ G $ is a $\Z$-universal binary generalized form over $ K $ and $ Q $ is the quadratic form associated to $ G $, then $ Q $ is equivalent to one of the following forms:
\[
    \begin{pmatrix}
        1&0&0&0\\
        0&1&0&0\\
        0&0&3&0\\
        0&0&0&3
    \end{pmatrix},\ 
    \begin{pmatrix}
        1&0&0&0\\
        0&2&1&0\\
        0&1&2&0\\
        0&0&0&3
    \end{pmatrix},\ 
    \begin{pmatrix}
        1&0&0&0\\
        0&2&0&0\\
        0&0&3&0\\
        0&0&0&6
    \end{pmatrix}.
\]
Conversely, for each of these forms, there exists an equivalent form $ Q $ associated to a $\Z$-universal binary generalized form over $ K $.
\end{proposition}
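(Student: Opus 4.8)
The plan is to mimic the argument of \Cref{subsecExamplesD2}, but the analysis is shorter for $D=3$ because the rank conditions alone already isolate the three forms. Suppose $G$ is a $\Z$-universal binary generalized form over $K=\Q(\sqrt{3})$ and let $Q$ be the associated quaternary quadratic form with matrix $M_Q$. Since $3\equiv 3\pmod 4$, integrality of $G$ forces $Q$ to be classical by \Cref{lemmaGQ}; as $Q$ is moreover positive definite and universal, it is equivalent to one of the forms $1\oplus L$ on Bhargava's list \cite[Table~5]{Bh}. By \Cref{corDetMQ} we have $9\mid\det(M_Q)$, and by \Cref{propDetBinary} $\det(M_Q)/9\equiv 0,1\pmod 4$; inspecting \cite[Table~5]{Bh} shows that the only possibilities are the sixteen forms recorded in \Cref{tabD3}.

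Next I would apply the two rank constraints. \Cref{propRank} with $p=3$ and $n=2$ gives $\rank(M_Q\bmod 3)\le 2$, while \Cref{lemmaRank3mod4} gives that $\rank(M_Q\bmod 2)$ is even. Reading these two requirements off the last two columns of \Cref{tabD3}, the only surviving entries are $1\oplus L$ with $L$ equal to $9:\ 1\ 3\ 3\ 0\ 0\ 0$, $9:\ 2\ 2\ 3\ 0\ 0\ 2$, and $36:\ 2\ 3\ 6\ 0\ 0\ 0$ respectively, which are exactly the three matrices displayed in the statement. In particular, unlike the $D=2$ case, no $2$-adic normal form computation is needed here: the determinant and rank conditions already cut the list down to precisely three candidates.

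For the converse I would exhibit, for each of the three candidates, an explicit $\Z$-universal binary generalized form over $K$ realizing it. The first candidate, $1\oplus(9:\ 1\ 3\ 3\ 0\ 0\ 0)=\mathrm{diag}(1,1,3,3)$, is realized by the form $G_3$ of \Cref{lemmaExD3}, whose associated matrix was already shown there to be $\Z$-equivalent to $\mathrm{diag}(1,1,3,3)$. For the remaining two I would write down binary generalized forms over $K$ by choosing coefficients $a,b,c,d,e,f$ (in the notation of \Cref{lemmaMQBin}) so that the associated matrix $M_Q$ diagonalizes --- or $\SL_4(\Z)$-reduces --- to the required matrix, and then certify positive-definiteness and $\Z$-universality by the $15$-Theorem exactly as in \Crefrange{lemmaExD2}{lemmaExD10}.

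The main obstacle is this converse step: finding, for $1\oplus(9:\ 2\ 2\ 3\ 0\ 0\ 2)$ and $1\oplus(36:\ 2\ 3\ 6\ 0\ 0\ 0)$, coefficient tuples whose associated quaternary form has the prescribed shape, together with the explicit unimodular change of variables witnessing each equivalence. This is a finite, essentially mechanical search guided by \Cref{lemmaMQBin}; everything preceding it is a table lookup combined with the already-established lemmas.
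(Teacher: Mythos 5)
Your argument for the forward direction coincides with the paper's: the same determinant constraints (\Cref{corDetMQ}, \Cref{propDetBinary}) produce \Cref{tabD3}, and the same two rank conditions (\Cref{propRank} modulo $3$ and \Cref{lemmaRank3mod4} modulo $2$) correctly eliminate all but $9:\ 1\ 3\ 3\ 0\ 0\ 0$, $9:\ 2\ 2\ 3\ 0\ 0\ 2$, and $36:\ 2\ 3\ 6\ 0\ 0\ 0$; your observation that no $2$-adic normal form computation is needed for $D=3$ matches the paper as well. The gap is in the converse. You correctly dispose of the first candidate via \Cref{lemmaExD3}, but for the other two you only assert that a ``finite, essentially mechanical search'' will produce suitable generalized forms --- and the existence of such forms is precisely the content of the converse, not something guaranteed in advance by the preceding filtration. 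Until witnesses are exhibited, the statement that all three matrices actually occur remains unproved. The paper supplies them explicitly: $[1,-1-\sqrt{3},2,-1,1,-2]$, whose associated matrix
\[
    \begin{pmatrix}
        1&0&0&-3\\
        0&9&-3&-6\\
        0&-3&2&0\\
        -3&-6&0&18
    \end{pmatrix}
\]
is taken to $1\oplus(9:\ 2\ 2\ 3\ 0\ 0\ 2)$ by an explicit element of $\SL_4(\Z)$, and $[1,1,1,0,1,1]$, whose associated matrix reduces to $\mathrm{diag}(1,2,3,6)$, realizing $36:\ 2\ 3\ 6\ 0\ 0\ 0$. To complete your proof you must either quote such examples or carry out the search you describe and record its output together with the unimodular changes of variables.
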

\begin{proof}
Let $ M_Q $ be the matrix of $ Q $. If $ Q $ is associated to a $\Z$-universal binary generalized form, then $ Q $ must be equivalent to one of the forms in \Cref{tabD3} by the discussion before the proposition. Moreover, $ \rank(M_Q \bmod{3}) \leq 2 $ and $ \rank(M_Q \bmod{2}) $ is even. The only forms $ L $ in \Cref{tabD3} such that $ \rank(M_{1\oplus L} \bmod{2}) $ is even and $ \rank(M_{1\oplus L} \bmod{3}) \leq 2 $ are $ 9:\ 1\ 3\ 3\ 0\ 0\ 0 $, $ 9:\ 2\ 2\ 3\ 0\ 0\ 2 $, and $ 36:\ 2\ 3\ 6\ 0\ 0\ 0 $.

Now we prove the converse. Let $ G_1 $ be the generalized form $ [2,-2\sqrt{3},2,-3,-\sqrt{3},0] $ and let $ Q_1 $ be the quadratic form associated to $ G_1 $. By \Cref{lemmaMQBin}, the matrix of $ Q_1 $ equals
\[
    M_{Q_1} = \begin{pmatrix}
        1&0&0&-3\\
        0&21&-9&0\\
        0&-9&4&0\\
        -3&0&0&12
    \end{pmatrix}.
\]
We verified in \Cref{lemmaExD3} that $ Q_1 $ is equivalent to $ 9:\ 1\ 3\ 3\ 0\ 0\ 0 $.

Let $ G_2 $ be the generalized form $ [1,-1-\sqrt{3},2,-1,1,-2] $ and let $ Q_2 $ be the quadratic form associated to $ G_2 $. The matrix of $ Q_2 $ equals
\[
    M_{Q_2} = \begin{pmatrix}
        1&0&0&-3\\
        0&9&-3&-6\\
        0&-3&2&0\\
        -3&-6&0&18
    \end{pmatrix}.
\]
If we let
\[
    A = \begin{pmatrix}
        1&-3&0&3\\
        0&-1&0&2\\
        0&-1&1&3\\
        0&-1&0&1
    \end{pmatrix},
\]
then $ \det(A)=1 $, hence $ A \in \SL_4(\Z) $. We have
\[
    A^\top M_{Q_2} A = \begin{pmatrix}
        1&-3&0&3\\
        0&-1&0&2\\
        0&-1&1&3\\
        0&-1&0&1
    \end{pmatrix}^\top\begin{pmatrix}
        1&0&0&-3\\
        0&9&-3&-6\\
        0&-3&2&0\\
        -3&-6&0&18
    \end{pmatrix}\begin{pmatrix}
        1&-3&0&3\\
        0&-1&0&2\\
        0&-1&1&3\\
        0&-1&0&1
    \end{pmatrix} = \begin{pmatrix}
        1&0&0&0\\
        0&2&1&0\\
        0&1&2&0\\
        0&0&0&3
    \end{pmatrix},
\]
hence $ Q_2 $ is equivalent to $ 9:\ 2\ 2\ 3\ 0\ 0\ 2 $.

Let $ G_3 $ be the generalized form $ [1,1,1,0,1,1] $ and let $ Q_3 $ be the quadratic form associated to $ G_3 $. The matrix of $ Q_3 $ equals
\[
    M_{Q_3} = \begin{pmatrix}
        2&0&2&0\\
        0&6&0&0\\
        2&0&3&0\\
        0&0&0&3
    \end{pmatrix} \sim \begin{pmatrix}
        1&0&0&0\\
        0&2&0&0\\
        0&0&3&0\\
        0&0&0&6
    \end{pmatrix},
\]
hence $ Q_3 $ is equivalent to $ 36:\ 2\ 3\ 6\ 0\ 0\ 0 $.
\end{proof}

\subsection{Universal quadratic forms associated to classical binary generalized forms for \texorpdfstring{$ D = 5 $}{D=5}}
\label{subsecExamplesD5}

Let $ Q $ be a classical quaternary positive definite form over $ \Z $ with matrix $ M_Q $ and suppose that there exists a binary classical generalized form $ G $ over $ K = \Q(\sqrt{5}) $ such that $ Q $ is associated to $ G $. By \Cref{corDetMQ}, $ 5^2 \mid \det(M_Q) $ and by \Cref{propRank}, $ \rank(M_Q \bmod{5}) \leq 2 $.

In \Cref{tabD5}, we list all universal quaternary quadratic forms $ 1\oplus L $ with $ \det(M_{1\oplus L}) $ divisible by $ 25 $ from \cite[Table 5]{Bh}. For each of the we compute its rank modulo $ 5 $.

\begin{table}[ht]
    \centering
    \begin{tabular}{|l|r|}
        \hline
         $L$&$\rank(M_{1\oplus L} \bmod{5})$\\
         \hline
         $25:\ 1\ 2\ 13\ 2\ 0\ 0$&$3$\\
         $25:\ 2\ 3\ 5\ 0\ 0\ 2$&$2$\\
         $25:\ 2\ 3\ 5\ 2\ 2\ 0$&$3$\\
         $50:\ 2\ 4\ 7\ 2\ 2\ 0$&$3$\\
         $100:\ 2\ 4\ 13\ 0\ 2\ 0$&$3$\\
         $100:\ 2\ 4\ 14\ 4\ 2\ 0$&$3$\\
         $100:\ 2\ 5\ 10\ 0\ 0\ 0$&$2$\\
         \hline
    \end{tabular}
    \caption{Ternary forms $ L $ such that $ 1\oplus L $ is universal and $ \det(M_{1\oplus L}) $ is divisible by $ 25 $.}
    \label{tabD5}
\end{table}

\begin{proposition}
\label{propExamplesD5}
Let $ K = \Q(\sqrt{5}) $. If $ G $ is a classical $ \Z $-universal binary generalized form over $ K $ and $ Q $ is a quadratic form associated to $ G $, then $ Q $ is equivalent to one of the following forms:
\[
    \begin{pmatrix}
        1&0&0&0\\
        0&2&1&0\\
        0&1&3&0\\
        0&0&0&5
    \end{pmatrix},\ 
    \begin{pmatrix}
        1&0&0&0\\
        0&2&0&0\\
        0&0&5&0\\
        0&0&0&10
    \end{pmatrix}.
\]
Conversely, for each of these forms, there exists an equivalent form $ Q $ associated to a classical $ \Z $-universal binary generalized form over $ K $.
\end{proposition}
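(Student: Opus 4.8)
The plan is to argue exactly as in \Cref{propExamplesD2,propExamplesD3}: I would use the determinant and rank constraints on the associated quaternary form to cut the candidate list from \cite[Table~5]{Bh} down to the two forms in the statement, and then realize both. Concretely, for the forward implication I start with a classical $ \Z $-universal binary generalized form $ G $ over $ K = \Q(\sqrt{5}) $ and let $ Q $ be its associated quaternary quadratic form with matrix $ M_Q $. Since $ G $ is classical and $ D = 5 \equiv 1 \pmod 4 $, \Cref{lemmaGQ} makes $ Q $ classical, and $ Q $ is positive definite and universal. Then \Cref{corDetMQ} gives $ 25 \mid \det(M_Q) $ and \Cref{propRank} gives $ \rank(M_Q \bmod 5) \leq 2 $. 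Scanning \cite[Table~5]{Bh} for universal classical positive definite quaternary forms $ 1\oplus L $ with $ 25 \mid \det(M_{1\oplus L}) $ produces the list in \Cref{tabD5}; of these, only $ L = 25:\ 2\ 3\ 5\ 0\ 0\ 2 $ and $ L = 100:\ 2\ 5\ 10\ 0\ 0\ 0 $ have rank $ \leq 2 $ modulo $ 5 $, and the corresponding forms $ 1\oplus L $ are precisely the two matrices displayed in the statement. Hence $ Q $ is equivalent to one of them.

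For the converse I would exhibit both. One direction is free: \Cref{lemmaExD5} already constructs a classical positive definite $ \Z $-universal $ G_5 $ whose associated form is equivalent to $ 1 \oplus (25:\ 2\ 3\ 5\ 0\ 0\ 2) $, i.e.\ to the first matrix. For the second matrix $ \operatorname{diag}(1,2,5,10) $, I would take a block-diagonal ansatz $ G'(z,w) = a z^2 + d\,z\tau(z) + \tau(a)\tau(z)^2 + c\,w^2 + f\,w\tau(w) + \tau(c)\tau(w)^2 $ with $ a, c \in \O_K $ and $ d, f \in 2\Z $, which is automatically classical; by \Cref{lemmaMQBin} with $ D = 5 $ the associated matrix $ M_{Q'} $ is then a direct sum of two binary integral matrices, one determined by $ (a,d) $ and one by $ (c,f) $. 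Reusing the $ z $-part of $ G_5 $ ($ a = 3 - \omega_5 $, $ d = -4 $) gives the block $ \left(\begin{smallmatrix}1 & -2\\ -2 & 9\end{smallmatrix}\right) \sim \operatorname{diag}(1,5) $, and a short search over small $ (c,f) $ — for instance $ c = 6 - 2\omega_5 $, $ f = -8 $ — gives the block $ \left(\begin{smallmatrix}2 & -4\\ -4 & 18\end{smallmatrix}\right) \sim \operatorname{diag}(2,10) $. Both blocks are positive definite, so $ G' $ is positive definite, and $ M_{Q'} \sim \operatorname{diag}(1,5,2,10) = \operatorname{diag}(1,2,5,10) $, universal by the 15-Theorem; thus $ G' $ is $ \Z $-universal and its associated form is equivalent to the second matrix.

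I expect the routine parts to be the bookkeeping against \cite[Table~5]{Bh} in the forward direction and the completing-the-square equivalences for the two $ 2\times2 $ blocks (or, if one prefers, an explicit $ \SL_4(\Z) $ conjugation as in \Cref{lemmaExD3,lemmaExD7}). The one delicate point is the inverse problem in the converse, namely realizing $ \operatorname{diag}(1,2,5,10) $ by a \emph{classical} generalized form; unlike the $ D = 2 $ case this needs only one new form and the block-diagonal ansatz suffices, so no real machine search is necessary, but one must notice that \Cref{lemmaMQBin} forces the trace of the $ (c,f) $-block to be divisible by $ 5 $ — so $ c $ cannot be chosen to make that block literally diagonal, and a non-diagonal $ \SL_2(\Z) $-representative of $ \operatorname{diag}(2,10) $ must be used instead.
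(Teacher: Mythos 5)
Your proposal is correct and follows essentially the same route as the paper: the forward direction uses \Cref{corDetMQ}, \Cref{propRank}, and \Cref{tabD5} exactly as in the paper's proof, and for the converse the paper likewise takes the block-diagonal form $[\,\cdot\,,0,6-2\omega_5,-4,0,-8]$ realizing $\operatorname{diag}(1,2,5,10)$ (with $a = 2+\omega_5$ giving the block $\left(\begin{smallmatrix}1&3\\3&14\end{smallmatrix}\right)$ in place of your $a=3-\omega_5$ giving $\left(\begin{smallmatrix}1&-2\\-2&9\end{smallmatrix}\right)$, both equivalent to $\operatorname{diag}(1,5)$). Your observation that \Cref{lemmaMQBin} forces the trace of each diagonal block to be divisible by $5$, so a non-diagonal $\SL_2(\Z)$-representative of $\operatorname{diag}(2,10)$ is required, is a correct and worthwhile remark that the paper leaves implicit.
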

\begin{proof}
Let $ M_Q $ be the matrix of $ Q $. If $ Q $ is associated to a classical $ \Z $-universal binary form, then $ Q $ must be one of the forms listed in \Cref{tabD5}. Moreover, $ \rank(M_Q \bmod{5}) \leq 2 $. The only forms $ L $ in \Cref{tabD5} such that $ \rank(M_{1\oplus L} \bmod{5}) \leq 2 $ are $ 25:\ 2\ 3\ 5\ 0\ 0\ 2 $ and $ 100:\ 2\ 5\ 10\ 0\ 0\ 0$.

Let $ G_1 = [3-\omega_5,0,1,-4,0,0] $ and $ G_2 = [2+\omega_5,0,6-2\omega_5,-4,0,-8] $ where $ \omega_5 = \frac{1+\sqrt{5}}{2} $ and let $ Q_1 $, $ Q_2 $ be the quadratic forms associated to $ G_1 $ and $ G_2 $, respectively. By \Cref{lemmaMQBin} and \Cref{lemmaExD5}, the matrices of $ Q_1 $ and $ Q_2 $ are
\begin{align*}
    M_{Q_1}& = \begin{pmatrix}
        1&-2&0&0\\
        -2&9&0&0\\
        0&0&2&1\\
        0&0&1&3
    \end{pmatrix}\sim \begin{pmatrix}
        1&0&0&0\\
        0&2&1&0\\
        0&1&3&0\\
        0&0&0&5
    \end{pmatrix},\\
    M_{Q_2}& = \begin{pmatrix}
        1&3&0&0\\
        3&14&0&0\\
        0&0&2&-4\\
        0&0&-4&18
    \end{pmatrix}\sim\begin{pmatrix}
        1&0&0&0\\
        0&2&0&0\\
        0&0&5&0\\
        0&0&0&10
    \end{pmatrix}.
\end{align*}

\end{proof}

\subsection{Universal quadratic forms associated to binary generalized forms for \texorpdfstring{$ D = 6, 7, 10 $}{D=6,7,10}}
\label{subsecExamplesD6}

\begin{proposition}
\label{propExamplesD6}
Let $ K = \Q(\sqrt{6}) $. If $ G $ is a $ \Z $-universal binary generalized form over $ K $ and $ Q $ is the quadratic form associated to $ G $, then $ Q $ is equivalent to the form
\[
    \begin{pmatrix}
        1&0&0&0\\
        0&2&0&0\\
        0&0&3&0\\
        0&0&0&6
    \end{pmatrix}.
\]
Conversely, there exists an equivalent form $ Q $ associated to a $ \Z $-universal binary generalized form over $ K $.
\end{proposition}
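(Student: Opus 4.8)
The plan is to reuse the strategy of the cases $D=2,3,5$: first pin down, using Bhargava's list \cite[Table~5]{Bh} together with the divisibility, determinant-congruence, and rank constraints proved earlier, the only classical universal quaternary form that could be associated to a $\Z$-universal binary generalized form over $K=\Q(\sqrt{6})$, and then observe that this candidate is actually realized by the form already constructed in \Cref{lemmaExD6}.

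For the forward direction I would argue as follows. Given a $\Z$-universal binary generalized form $G$ over $K$ with associated quaternary form $Q$ (matrix $M_Q$), since $6\equiv 2\pmod 4$ and $G$ is integral, \Cref{lemmaGQ} makes $Q$ classical, and it is moreover positive definite and universal, hence equivalent to some $1\oplus L$ from \cite[Table~5]{Bh}. Then \Cref{corDetMQ} gives $36\mid\det(M_Q)$ and \Cref{propDetBinary} gives $\det(M_Q)/36\equiv 0,1\pmod 4$; since the multiples of $36$ at most $112$ are $36,72,108$ and $72/36\equiv 2$, $108/36\equiv 3\pmod 4$, only determinant $36$ is possible. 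The four determinant-$36$ entries of the table are $36:\ 2\ 3\ 6\ 0\ 0\ 0$, $36:\ 2\ 4\ 5\ 0\ 2\ 0$, $36:\ 2\ 4\ 6\ 4\ 2\ 0$, and $36:\ 2\ 5\ 5\ 4\ 2\ 2$. Next I would invoke \Cref{propRank} with the primes $2$ and $3$ dividing $D$ to get $\rank(M_Q\bmod 2)\le 2$ and $\rank(M_Q\bmod 3)\le 2$; the ranks modulo $2$ and $3$ of all four candidates are already recorded in \Cref{tabD2a} and \Cref{tabD3}, and only $36:\ 2\ 3\ 6\ 0\ 0\ 0$ has both at most $2$. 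Since $1\oplus(36:\ 2\ 3\ 6\ 0\ 0\ 0)$ is exactly the diagonal form displayed in the statement, this completes the forward implication. For the converse nothing new is needed: \Cref{lemmaExD6} already produces a $\Z$-universal binary generalized form $G_6$ over $\Q(\sqrt{6})$ whose associated quadratic form was shown there to be equivalent to the diagonal form with entries $1,3,2,6$, i.e.\ to the form in the statement up to a permutation of the variables.

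The step I expect to demand the most care — though it is bookkeeping rather than a genuine obstacle — is making sure that every determinant-multiple-of-$36$ entry of Bhargava's table has been located and that the reductions modulo $2$ and $3$ are read off correctly; fortunately all of this data has already been assembled in \Cref{tabD2a} and \Cref{tabD3}, so the verification is immediate and the proof is short.
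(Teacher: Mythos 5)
Your proposal is correct and follows essentially the same route as the paper: restrict to Bhargava's list via \Cref{lemmaGQ}, use \Cref{corDetMQ} and \Cref{propDetBinary} to force determinant $36$, apply \Cref{propRank} at $p=2$ and $p=3$ to isolate $36:\ 2\ 3\ 6\ 0\ 0\ 0$ among the four determinant-$36$ candidates (whose ranks are tabulated in \Cref{tabD3}), and cite \Cref{lemmaExD6} for the converse. No gaps.
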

\begin{proof}
Let $ M_Q $ be the matrix of $ Q $. By \Cref{corDetMQ}, $ 6^2 \mid \det(M_Q) $ and by \Cref{propDetBinary},
\[
    \frac{\det(M_Q)}{6^2} \equiv 0, 1 \pmod{4}.
\]
Moreover, by \Cref{propRank}, $ \rank(M_Q \bmod{2}) \leq 2 $ and $ \rank(M_Q \bmod{3}) \leq 2 $.

All universal classical quaternary quadratic forms with determinant divisible by $ 36 $ were already listed in \Cref{tabD3}. The only form satisfying the additional conditions is $ 36:\ 2\ 3\ 6\ 0\ 0\ 0 $.

If we let $ G = [3,2\sqrt{6},3,-5, \sqrt{6},-1] $, then
\[
    M_Q = \begin{pmatrix}
        1&0&0&6\\
        0&66&18&0\\
        0&18&5&0\\
        6&0&0&42
    \end{pmatrix} \sim \begin{pmatrix}
        1&0&0&0\\
        0&2&0&0\\
        0&0&3&0\\
        0&0&0&6
    \end{pmatrix}.
\]
(We showed this in \Cref{lemmaExD6}.)
\end{proof}

\begin{proposition}
\label{propExamplesD7}
Let $ K = \Q(\sqrt{7}) $. If $ G $ is a $ \Z $-universal binary generalized form over $ K $ and $ Q $ is the quadratic form associated to $ G $, then $ Q $ is equivalent to the form
\[
    \begin{pmatrix}
        1&0&0&0\\
        0&2&1&0\\
        0&1&4&0\\
        0&0&0&7
    \end{pmatrix}.
\]
Conversely, there exists an equivalent form $ Q $ associated to a $ \Z $-universal binary generalized form over $ K $.
\end{proposition}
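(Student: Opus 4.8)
The plan is to follow verbatim the pattern of Propositions~\ref{propExamplesD3}, \ref{propExamplesD5}, and \ref{propExamplesD6}. Suppose $G$ is a $\Z$-universal binary generalized form over $K=\Q(\sqrt{7})$ and let $M_Q$ be the matrix of the associated quaternary quadratic form $Q$; by \Cref{lemmaGQ} the form $Q$ is classical, positive definite, and universal, hence equivalent to one of the entries $1\oplus L$ of the classification \cite[Table~5]{Bh}. I would then impose the arithmetic restrictions coming from the earlier sections: since $7\equiv 3\pmod 4$, \Cref{corDetMQ} gives $7^2\mid\det(M_Q)$, \Cref{propDetBinary} gives $\det(M_Q)/49\equiv 0,1\pmod 4$, \Cref{propRank} gives $\rank(M_Q\bmod 7)\le 2$, and \Cref{lemmaRank3mod4} gives that $\rank(M_Q\bmod 2)$ is even.

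First I would perform the table lookup. Every classical quaternary universal form has determinant at most $112$, so the only multiples of $49$ that can occur are $49$ and $98$; since $98/49=2\not\equiv 0,1\pmod 4$, \Cref{propDetBinary} eliminates $98$ and forces $\det(M_Q)=49$. Among the few forms $1\oplus L$ on Bhargava's list with determinant $49$, I would discard each whose reduction modulo $7$ has rank $>2$ or whose reduction modulo $2$ has odd rank; I expect exactly $49:\ 2\ 4\ 7\ 0\ 0\ 2$ to survive, which is precisely the matrix displayed in the statement. (Building a short table analogous to \Cref{tabD3} makes this verification routine.)

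For the converse I would invoke \Cref{lemmaExD7}: there the generalized form $G_7=[3,\,1+2\sqrt{7},\,4,\,-5,\,-1+2\sqrt{7},\,3]$ is shown to be $\Z$-universal over $\Q(\sqrt 7)$, and an explicit $A_7\in\SL_4(\Z)$ is exhibited for which $A_7^{\top}M_{Q_7}A_7$ equals the matrix in the statement; hence the form is realized. The argument is essentially bookkeeping, and the only point needing care is confirming that no determinant-$49$ entry of \cite[Table~5]{Bh} other than $49:\ 2\ 4\ 7\ 0\ 0\ 2$ simultaneously passes the rank-mod-$2$ and rank-mod-$7$ tests, which I would check entry by entry exactly as in the proof of \Cref{propExamplesD3}.
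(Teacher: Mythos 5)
Your proposal follows essentially the same route as the paper: restrict to determinant $49$ via \Cref{corDetMQ} and \Cref{propDetBinary}, eliminate the remaining determinant-$49$ entries of \cite[Table~5]{Bh} by the rank-mod-$7$ criterion of \Cref{propRank} (the paper checks the three candidates $49{:}\ 2\ 3\ 9\ 2\ 2\ 0$, $49{:}\ 2\ 4\ 7\ 0\ 0\ 2$, $49{:}\ 2\ 5\ 6\ 0\ 2\ 2$ and finds only the second has rank $\le 2$ modulo $7$), and realize the survivor via \Cref{lemmaExD7}. The additional rank-mod-$2$ test from \Cref{lemmaRank3mod4} that you invoke is valid for $D=7$ but turns out to be unnecessary, since the mod-$7$ test alone already isolates the unique candidate.
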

\begin{proof}
Let $ M_Q $ be the matrix of $ Q $. By \Cref{corDetMQ}, $ 7^2 \mid \det(M_Q) $ and by \Cref{propDetBinary},
\[
    \frac{\det(M_Q)}{7^2} \equiv 0, 1 \pmod{4}.
\]

The only forms in \cite[Table 5]{Bh} satisfying these conditions are those with determinant equal to $ 49 $, namely
\begin{align*}
    L_1& = 49:\ 2\ 3\ 9\ 2\ 2\ 0,\\
    L_2& = 49:\ 2\ 4\ 7\ 0\ 0\ 2,\\
    L_3& = 49:\ 2\ 5\ 6\ 0\ 2\ 2.
\end{align*}

By \Cref{propRank}, $ \rank(M_Q \bmod{7}) \leq 2 $. We have $ \rank(1\oplus L_1 \bmod{7}) = 3 $, $ \rank(1\oplus L_2 \bmod{7}) = 2 $, and $ \rank(1 \oplus L_3 \bmod{7}) = 3 $. Thus, the only candidate is $ 49:\ 2\ 4\ 7\ 0\ 0\ 2 $.

Let $ G = [3,1+2\sqrt{7},4,-5,-1+2\sqrt{7},3] $. The matrix $ M_Q $ of the quadratic form $ Q $ associated to $ G $ equals
\[
    M_Q = \begin{pmatrix}
        1&0&0&0\\
        0&77&28&14\\
        0&28&11&0\\
        0&14&0&35
    \end{pmatrix} \sim \begin{pmatrix}
        1&0&0&0\\
        0&2&1&0\\
        0&1&4&0\\
        0&0&0&7
    \end{pmatrix}.
\]
(We showed this in \Cref{lemmaExD7}.)
\end{proof}

\begin{proposition}
\label{propExamplesD10}
Let $ K = \Q(\sqrt{10}) $. If $ G $ is a $ \Z $-universal binary generalized form over $ K $ and $ Q $ is the quadratic form associated to $ G $, then $ Q $ is equivalent to the form
\[
    \begin{pmatrix}
        1&0&0&0\\
        0&2&0&0\\
        0&0&5&0\\
        0&0&0&10
    \end{pmatrix}.
\]
Conversely, there exists an equivalent form $ Q $ associated to a $ \Z $-universal binary generalized form over $ K $.
\end{proposition}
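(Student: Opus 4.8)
The plan is to repeat almost verbatim the argument used for $D = 6$ and $D = 7$ in \Cref{propExamplesD6,propExamplesD7}. Let $G$ be a $\Z$-universal binary generalized form over $K = \Q(\sqrt{10})$, let $Q$ be the associated quaternary quadratic form, and let $M_Q$ be its matrix. Since $10 \equiv 2 \pmod 4$ and $G$ is integral, \Cref{lemmaGQ} shows that $Q$ is a classical positive definite universal quaternary form, hence equivalent to some $1 \oplus L$ appearing in \cite[Table~5]{Bh}. By \Cref{corDetMQ} we have $100 \mid \det(M_Q)$, and by \Cref{propDetBinary} we have $\det(M_Q)/100 \equiv 0, 1 \pmod 4$. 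Because every form on Bhargava's list has determinant at most $112$, this forces $\det(M_Q) = 100$, so $L$ is one of the three determinant-$100$ ternary forms $100{:}\ 2\ 4\ 13\ 0\ 2\ 0$, $100{:}\ 2\ 4\ 14\ 4\ 2\ 0$, and $100{:}\ 2\ 5\ 10\ 0\ 0\ 0$, all of which are already recorded in \Cref{tabD5}.

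Next I would invoke \Cref{propRank}: since both $2$ and $5$ divide $10$, we must have $\rank(M_Q \bmod 2) \leq 2$ and $\rank(M_Q \bmod 5) \leq 2$. The ranks modulo $2$ of the three candidates are listed in \Cref{tabD2a} and the ranks modulo $5$ in \Cref{tabD5}; reading them off, the first two forms have rank $3$ modulo $2$ and are discarded, while $1 \oplus (100{:}\ 2\ 5\ 10\ 0\ 0\ 0)$, whose matrix is the diagonal form $\mathrm{diag}(1,2,5,10)$, passes both tests and is the sole surviving candidate. This establishes the forward implication.

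The converse requires no new work: in \Cref{lemmaExD10} we already exhibited a $\Z$-universal binary generalized form $G_{10}$ over $\Q(\sqrt{10})$ whose associated quadratic form has matrix equivalent to $\mathrm{diag}(1,2,5,10)$, so this is the promised example. I do not anticipate any genuine obstacle here — the argument is entirely parallel to the $D = 7$ case. The only points needing a little care are confirming that no proper multiple of $100$ can occur as the determinant (handled by the bound $\det(M_Q) \leq 112$ from \cite[Table~5]{Bh}) and reusing the already-tabulated ranks modulo $2$ and $5$ from \Cref{tabD2a,tabD5} rather than recomputing them.
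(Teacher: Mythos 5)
Your proposal is correct and follows essentially the same route as the paper: restrict to the determinant-$100$ entries of Bhargava's table via \Cref{corDetMQ}, eliminate all but $100{:}\ 2\ 5\ 10\ 0\ 0\ 0$ using \Cref{propRank}, and cite \Cref{lemmaExD10} for the converse. The only (immaterial) difference is that you discard the two spurious candidates by their rank modulo $2$, whereas the paper uses their rank modulo $5$; both are instances of \Cref{propRank} and give the same conclusion.
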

\begin{proof}
Let $ M_Q $ be the matrix of $ Q $. By \Cref{corDetMQ}, $ 10^2 \mid \det(M_Q) $. The quadratic forms from \cite[Table 5]{Bh} with determinant divisible by $ 100 $ were already listed in \Cref{tabD5}. By \Cref{propRank}, $ \rank(M_Q \bmod{5}) \leq 2 $. The only form satisfying these criteria is $ 100:\ 2\ 5\ 10\ 0\ 0\ 0 $.

If we let $ G = [3,\sqrt{10},5,-5,\sqrt{10},7] $, then
\[
    M_Q = \begin{pmatrix}
        1&0&0&0\\
        0&110&20&0\\
        0&20&17&20\\
        0&0&20&30
    \end{pmatrix} \sim \begin{pmatrix}
        1&0&0&0\\
        0&2&0&0\\
        0&0&5&0\\
        0&0&0&10
    \end{pmatrix}.
\]
(We showed this in \Cref{lemmaExD10}.)
\end{proof}

\end{document}